\thanks{Research partially supported by the NCN grant
2014/13/B/ST1/00543.
}
\newtheorem {pro}{Proposition}[section]
\newtheorem {thm}[pro]{Theorem}
\newtheorem {cor}[pro]{Corollary}
\newtheorem{lem}[pro]{Lemma}
\newtheorem {prodfn}[pro]{Proposition and Definition}
\theoremstyle{definition}
 \newtheorem{rem}[pro]{Remark}
\newtheorem{dfn}[pro]{Definition}
\theoremstyle{definition}
 \newcommand{\Q} {\mathbb{Q}}
\newcommand{\bt} {\mathbf{t}}
\newcommand{\f} {\mathcal{F}}
\newcommand{\pa}{\partial}
\newcommand{\ws}{\overline{\Omega}}
\newcommand{\etab}{\overline{\eta}}
\newcommand{\wsp}{\overline{\Omega}_{p}}
\newcommand{\hp}{H_{p}}
\newcommand{\op}{\Omega_{p}}
\newcommand{\Hbp}{\overline{H}_{p}}
\newcommand{\Hp}{H_{p}}
\newcommand{\supp}{\mbox{supp}}
\newcommand{\esup}{\mbox{ess sup}}
\newcommand{\jac}{\mbox{jac}\,}
\newcommand{\Bb}{ \overline{B}}
\newcommand{\R} {\mathbb{R}}
\newcommand{\N} {\mathbb{N}}
\newcommand{\C} {\mathcal{C}}
\newcommand{\A} {\mathcal{A}}
\newcommand{\hn}{\mathcal{H}}
\newcommand{\hti}{\tilde{h}}
\newcommand{\db}{\overline{d}}
\newcommand{\D}{\mathcal{D}}
\newcommand{\F}{\mathcal{F}}
\newcommand{\fpb}{\overline{\mathcal{F}}}
\newcommand{\E}{\mathcal{E}}
\newcommand{\Pp}{\mathbb{P}}
\newcommand{\ep}{\varepsilon}
\newcommand{\ux}{U_{x_0}}
\newcommand{\zx}{Z_{x_0}}
\newcommand{\nx}{N_{x_0}}
\newcommand{\K}{\mathcal{K}}
\newcommand{\ccn}{\mathcal{C}_n(R)}
\newcommand{\xt}{\tilde{x}}
\newcommand{\hh}{\mathcal{V}}
\newcommand{\ini}{\in [1,\infty]}
\newcommand{\ept}{\tilde{\ep}}
\newcommand{\bp}{\Big{(}}
\newcommand{\bq}{\Big{)}}
\newcommand{\jpp}{\frac{1}{p}}
\newcommand{\ori}{0_{\R^i}}
\newcommand{\orm}{0_{\R^m}}
\title[Poincar\'e duality for $L^p$ cohomology...] {Poincar\'e duality for   $L^p$ cohomology on subanalytic singular spaces}
\author{Guillaume Valette}
\address
{Institute of Mathematics, Jagiellonian University, ul. S. \L ojasiewicza 6, 30-348  Krak\'ow,
Poland} \email{gvalette@impan.pl}
\keywords{$L^p$ differential forms, de Rham theorem, noncompact manifolds,  subanalytic sets, Lipschitz geometry, Poincar\'e duality, intersection homology}
\thanks{}
\subjclass{14F40, 58A10, 55N33, 57P10, 32B20}
\begin{document}

\begin{abstract} 
We investigate the problem of Poincar\'e duality for $L^p$ differential forms on bounded subanalytic submanifolds
of $\R^n$ (not necessarily compact).  We show that, when $p$ is sufficiently close to $1$ then  the $L^p$
cohomology of such a submanifold is isomorphic to its singular
homology.   In the case where $p$ is large, we show that $L^p$ cohomology is dual to intersection homology. As a consequence, we can deduce that the $L^p$ cohomology is  Poincar\'e
dual to $L^q$ cohomology, if $p$ and $q$ are H\"older conjugate to each other and $p$ is sufficiently large.
\end{abstract}

\maketitle

\begin{section}{introduction}
The history of $L^p$ forms on singular varieties began when J.
Cheeger computed the $L^2$ cohomology
groups for varieties with metrically  conical singularities and started constructing a Hodge theory for  singular compact
varieties \cite{c1,c2, c3,c4}.  This enabled him to derive Poincar\'e duality results for singular varieties. These groups turned out to be related to  intersection cohomology \cite{cgm}, which clarified the interplay between Poincar\'e duality for $L^2$ cohomology and the geometry of the underlying variety. A significant achievement  was then made by W. C. Hsiang and V. Pati who proved  that the $L^2$ cohomology of  complex normal algebraic surfaces is isomorphic to intersection cohomology \cite{hp}.


%
%
%

 %


Since Cheeger's work on $L^2$ forms, many other authors have investigated
$L^p$ forms on singular varieties  focusing on various classes of Riemmanian
manifolds, with different restrictions on the metric near the
singularities, like in the case of the so-called $f$-horns  \cite{ bgm,d, weber,s1,s2,y}. In the present paper, assuming only
 that the given set is subanalytic (possibly singular) we investigate the problem of Poincar\'e duality for $L^p$ forms for $p$ sufficiently large or close to $1$. Our approach relies on the precise description of the Lipschitz geometry initiated by the author in \cite{vlt, vpams, vlinfty}.

In order to describe the achievements of this article, let us recall the de Rham theorem recently proved  in \cite{vlinfty} which motivated  the present paper. 

\begin{thm}\label{thm_intro_linfty}\cite{vlinfty} Let $X$ be a compact subanalytic pseudomanifold. For any $j$, we have:
$$H_\infty ^j(X_{reg}) \simeq I^{t}H^j (X).$$
Furthermore, the isomorphism is induced by the natural map
provided by integration on allowable simplices.
\end{thm}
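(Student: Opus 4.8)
The plan is to establish the isomorphism in the form asserted — as the map induced by integrating $L^\infty$ forms over allowable simplices — so I would first promote this integration to a morphism of complexes, and then show it is a quasi-isomorphism by reducing to local models via the Lipschitz-conic structure of subanalytic sets.

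\textbf{The integration map.} Let $\Omega_\infty^\bullet(X_{reg})$ be the complex of bounded forms with bounded differential, let $I^tC_\bullet(X)$ be the intersection chain complex (computed with subanalytic chains) for the top perversity $t$, and put $I^tH^j(X)=H^j\big(\mathrm{Hom}(I^tC_\bullet(X),\R)\big)$, the dual of $I^tH_\bullet(X)$. Write $\Sigma=X\setminus X_{reg}$. Because $t(c)=c-2$, every $t$-allowable $j$-chain $\xi$ satisfies $\dim(|\xi|\cap\Sigma)\le j-2$; being subanalytic, $\xi$ has finite $j$-volume and meets $\Sigma$ in a null set, so $\int_\xi\omega:=\int_{\xi\cap X_{reg}}\omega$ is well defined and dominated by $\|\omega\|_\infty\,\mathrm{vol}_j(\xi)$ for $\omega\in\Omega_\infty^j(X_{reg})$. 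For a $t$-allowable $(j+1)$-chain one has $\dim(|\xi|\cap\Sigma)\le j-1$, that is, codimension at least $2$ inside $\xi$; excising a shrinking subanalytic neighbourhood of this set, applying the classical Stokes theorem on the complement, and letting the neighbourhood shrink — the error terms tending to zero by the boundedness of $\omega$ and $d\omega$ and by {\L}ojasiewicz-type estimates on the volumes of subanalytic tubes — gives $\int_{\partial\xi}\omega=\int_\xi d\omega$. Thus $\omega\mapsto(\xi\mapsto\int_\xi\omega)$ is a morphism of complexes and induces $\int_*\colon H_\infty^j(X_{reg})\to I^tH^j(X)$.

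\textbf{Reduction to local models.} Both $U\mapsto H_\infty^\bullet(U\cap X_{reg})$ and $U\mapsto I^tH^\bullet(U)$, for open $U\subseteq X$, satisfy Mayer–Vietoris; for the former one invokes \emph{Lipschitz} subanalytic partitions of unity subordinate to finite subanalytic covers, which also make the sheafification $\widetilde{\Omega}_\infty^\bullet$ soft, so that on the compact $X$ its hypercohomology is computed by global bounded forms and $\int$ sheafifies to a morphism of complexes of sheaves on $X$. It therefore suffices to show $\int$ is a quasi-isomorphism on a cofinal family of neighbourhoods of each point. For $x\in X_{reg}$ both sides compute the real cohomology of a small ball and $\int$ is the de Rham isomorphism. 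For $x$ in a stratum of codimension $c\ge 2$, the Lipschitz-conic structure of subanalytic sets (\cite{vlt,vpams,vlinfty}) yields neighbourhoods $U$ for which $U\cap X_{reg}$ is subanalytically bi-Lipschitz to the regular part of $\R^{n-c}\times c(L_x)$, where $L_x$ is the link; the (bounded) Euclidean factor is split off by a Poincaré homotopy, leaving the cone $c(L_x)$.

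\textbf{The cone computation, and the main obstacle.} Here one needs the $L^\infty$ Poincaré lemma: $H_\infty^\bullet\big((c(L_x))_{reg}\cap B\big)$ is concentrated in degree $0$, where it coincides with $H^0$ of the regular part of the cone near the vertex. The model computation is the radial homotopy $\alpha+dr\wedge\beta\mapsto\int_0^r\beta(s)\,ds$, the point being that the resulting primitive, \emph{measured in the Riemannian metric actually induced on $(c(L_x))_{reg}\subset\R^n$}, stays bounded whenever the original form does. Since this induced metric is not the standard metric cone, but is only controlled by a warped metric cone through the subanalytic bi-Lipschitz triviality and {\L}ojasiewicz inequalities, pushing this estimate through is the crux of the whole argument. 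On the intersection side the top-perversity cone formula forces $I^tH_\bullet(c(L_x))$, hence $I^tH^\bullet(c(L_x))$, to be concentrated in degree $0$, again equal to $H^0$ of the regular part near the vertex; and there $\int_*$ is the tautological identification of locally constant bounded functions with functionals on points of $X_{reg}$, hence an isomorphism, while both groups vanish in positive degrees. Gluing the local isomorphisms through Mayer–Vietoris — equivalently, observing that $\int$ is now a quasi-isomorphism of complexes of sheaves on $X$ (so it matches the Goresky–MacPherson/Deligne characterisation of the top-perversity intersection complex) and passing to hypercohomology — proves the theorem. The delicate step is thus the $L^\infty$ Poincaré lemma on a \emph{general} subanalytic cone; the rigorous handling of Stokes and of the finiteness of the integrals along $\Sigma$, via the subanalytic currents formalism of the earlier papers, is a secondary technicality.
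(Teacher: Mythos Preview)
This theorem is not proved in the present paper: it is quoted from the author's earlier work \cite{vlinfty} and invoked as a black box (most notably in the proof of Theorem~\ref{thm_de_Rham_linfinity}). There is therefore no proof here against which to compare your attempt.

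That said, your outline is very much in the spirit of how this paper establishes the companion de Rham theorems (Theorems~\ref{thm_de_Rham_lq} and~\ref{thm_de_Rham_linfinity}): sheafify on $X$ rather than on $X_{reg}$, reduce by softness/fineness to a local Poincar\'e lemma at each point of $X$, and prove that lemma via the Lipschitz conic structure and an explicit homotopy operator. Two places where your sketch is loose are worth flagging. First, the conic structure (Theorem~\ref{thm_local_conic_structure} here) does \emph{not} yield a bi-Lipschitz identification with a cone; $H$ is Lipschitz but $H^{-1}$ is only Lipschitz away from the vertex. What drives the $L^\infty$ Poincar\'e lemma is instead the fact that the retractions $r_s$ are $Cs$-Lipschitz (item~(\ref{item_H_bi}) of that theorem), which is exactly the estimate needed to bound the radial primitive; your later paragraph shows you are aware the metric is not conical, but the earlier phrasing ``subanalytically bi-Lipschitz to $\R^{n-c}\times c(L_x)$'' is false as stated for general subanalytic germs. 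Second, the product decomposition along a stratum is neither provided by Theorem~\ref{thm_local_conic_structure} nor needed: the sheaf argument only requires the Poincar\'e lemma pointwise, and the paper (and presumably \cite{vlinfty}) works directly with the cone at each $x_0\in X$ without splitting off a Euclidean factor.
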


 Here, $H_\infty ^j$ denotes the $L^\infty$ cohomology while $I^\bt H^j(X)$ stands for the intersection cohomology of
 $X$ in the maximal perversity.
  The definitions of these cohomology theories are recalled in sections \ref{sect_lp} and \ref{sect_ih} below.
  We write $X_{reg}$  for the nonsingular part of $X$,
  i.e. the set of points at which $X$ is a smooth manifold.
  
    Intersection homology was introduced by M. Goresky and R. MacPherson in order to investigate the topology of singular sets. What makes it  very attractive is that they showed in their fundamental
paper \cite{ih1} that it satisfies Poincar\'e duality for a quite
large class of sets (recalled in Theorem \ref{thm_poincare_ih} below)
enclosing all the complex analytic sets (see also \cite{ih2}).
The above theorem thus raises the very natural question whether we can hope for Poincar\'e duality for $L^\infty$ cohomology of subanalytic pseudomanifolds, or more generally for $L^p$ cohomology $p\in [1,\infty]$. 

The natural candidate for being dual to $L^p$ cohomology is $L^q$ cohomology with 
 $\frac{1}{p}+\frac{1}{q}=1$. 
We  start by proving a de Rham theorem for the $L^p$ cohomology of a subanalytic submanifold $M\subset \R^n$ in the case where $p$ is close to $1$ (Theorem \ref{thm_de_Rham_lq}). We  also generalize Theorem \ref{thm_intro_linfty} by proving a De Rham theorem for $L^p$ cohomology for $p$ sufficiently large (Theorem \ref{thm_de_Rham_linfinity}). These results can be regarded as subanalytic versions of Cheeger's theorems.


This
 enables us to establish some
Poincar\'e duality results for $L^p$ cohomology  (Corollaries \ref{cor_1} and \ref{cor_2}).
Intersection homology turns out to be very useful to
assess the lack of duality between $L^p$ and $L^q$ cohomology.
In particular, we  see that the obstruction for this duality to hold is of purely
topological nature. Although the $L^p$  condition is
closely related to the metric structure of the singularities, the
 theorems below show that the knowledge of the topology of the
singularities is enough to ensure Poincar\'e duality. It is worthy of
notice that the only data of the topology of $X_{reg}$ is not enough. 

\subsection*{Organization of the article.}In section \ref{sect_framework}, we set-up our framework, state our de Rham theorems for $L^p$ cohomology, and derive two corollaries about 
 Poincar\'e duality.  
 The proof of these de Rham theorems is postponed to section \ref{sect_proof_de_rham_lp}.

The strategy used to establish them in section \ref{sect_proof_de_rham_lp} is classical: we first establish some Poincar\'e Lemmas for $L^p$ cohomology (Lemmas \ref{Poincare_lem_l1_simple} and  \ref{lem x normal Poincare lemma}) and then conclude by a sheaf theoretic argument.
Our
Poincar\'e Lemmas for $L^p$ cohomology require to define some homotopy operators on $L^p$ forms. The construction of these operators  (see (\ref{eq_def_alpha}) and (\ref{eq_K_0_2eme_forme})) as well as the study of their properties is carried out in section \ref{sect_hom_hop}.

Because of the of metric nature of the $L^p$ condition, this requires a delicate study of the Lipschitz properties of subanalytic singularities, which is the subject matter of section \ref{sect_lip}.
 Using the techniques developed in  \cite{vlt,vpams,vlinfty}, we show that
the conical structure of subanalytic set-germs may be required to have nice Lipschitz properties (Theorem \ref{thm_local_conic_structure}).   This theorem, which is of its own
interest, improves significantly the results of  \cite{vlinfty} where it
was  shown that every subanalytic germ may be retracted in a
Lipschitz way.  Since the homeomorphism of the conical structure provided by Theorem \ref{thm_local_conic_structure} is not smooth but just subanalytic and Lipschitz (unlike in \cite{c3, y}),      we have problems to pull-back smooth differential forms to smooth ones and
we shall also require stratification theory (in sections \ref{sect_hor_C1}, \ref{sect_strat}, and \ref{sect_lip_lp}) to overcome these difficulties (the subanalytic character of the homeomorphism of Theorem \ref{thm_local_conic_structure} is therefore essential). 
%
   
   As  we will be working near a point $x_0$ that can be a singular point of the closure of our given manifold, the  operators of section \ref{sect_hom_hop} may  provide nonsmooth forms. Such technical problems actually already arose in Cheeger's original paper \cite{c1} as well as in other settings \cite{y}, and we shall rely upon similar techniques to overcome these difficulties,
  using the notion of weak differentiation and starting by constructing some regularization operators in section \ref{sect_weakly_smooth} (see Theorem \ref{thm_regularization_operators}). The construction of these regularization operators that we will follow is indeed due to G. de Rham \cite{derham} (see also \cite{gold}).

   \end{section}

\begin{section}{Framework and main results}\label{sect_framework}

\begin{subsection}{Some notations}Throughout this article, $m$, $n$ and $k$ will stand for  integers. By ``smooth'', we will mean $C^\infty$.

We denote by $|.|$ the Euclidean norm of $\R^n$.  Given $x\in  \R^n$ and $\ep>0$, we respectively denote by $S(x,\ep)$ and  $B(x,\ep)$ the sphere and the open ball of radius $\ep$ that are centered  at $x$ (for the Euclidean distance). We also write $\overline{B}(x,\ep)$ for  the corresponding closed ball. 
Given a subset $A$ of $\R^n$, we denote the closure of $A$ by $cl(A)$ and set $\delta A=cl(A)\setminus A$.

Given two functions $\xi$ and $\zeta$  defined on a subset $A$ of $\R^n$ and a subset $B$   of $A$, we write ``$\xi \lesssim \zeta$ on $B$'' if there is a constant $C$ such that $\xi(x) \le C\zeta(x)$, for all $x\in B$. We write ``$\xi \sim \zeta$ on $B$'' if we have both   $\xi \lesssim \zeta$ and $\zeta \lesssim \xi$ on $B$. 

The graph of a mapping $f:A\to B$ will  be denoted $\Gamma_f$. A mapping $\xi:A\to \R^k$, $A\subset \R^n$, is said to be {\it Lipschitz} if it is Lipschitz with respect to the metric $|.|$, i.e., if there is a constant $C$ such that  $|\xi(x)-\xi(x')|\le C |x-x'|$, for all $x$ and $x'$ in $A$. We will say {\it $C$-Lipschitz} if we wish to specify the constant.
\end{subsection}

\begin{subsection}{The subanalytic category}We now  recall some basic facts about subanalytic sets and functions.

 \begin{dfn}\label{dfn_subanalytic}Let $N$ be an analytic manifold.
A subset $E\subset N$ is called (locally) {\it  semi-analytic} if it is locally
defined by finitely many real analytic equalities and inequalities. More precisely, for each $a \in   N$, there is
a neighborhood $U$ of $a$, and real analytic  functions $f_i, g_{ij}$ on $U$, where $i = 1, \dots, r, j = 1, \dots , s$, such that
\begin{equation}\label{eq_definition_semi}
E \cap   U = \bigcup _{i=1}^r\bigcap _{j=1} ^s \{x \in U : g_{ij}(x) > 0 \mbox{ and } f_i(x) = 0\}.
\end{equation}

We denote by $\Pp_1$ the $1$-dimensional real projective space. Let $\hh:\R\to \Pp_1$ be the mapping defined by $\hh(x)=[1:x]\in\Pp_1$ for every $x\in \R$.   Every  subset of $\R^n$ may be regarded as a subset of $\Pp_1^n$ via the homeomorphism (onto its image)
 $$\hh^n : \R^n  \to \Pp_1 ^n,  $$
$$(y_1, \dots, y_n) \mapsto  (\hh(y_1),\dots,\hh(y_n)).$$
  A subset $Z$ of $\R^n$ is  {\it globally semi-analytic} if $\hh^n(Z)$ is  a  semi-analytic subset of $\Pp_1^n$.  Of course, globally semi-analytic sets are semi-analytic. Clearly,  a bounded subset of $\R^n$ is semi-analytic if and only if it is globally semi-analytic.
\end{dfn}


Working with {\it globally} semi-analytic sets will make it possible to avoid some pathological situations at infinity. In particular, it will enable us to work without any properness assumption.  
The function $\sin x$ is a typical example of a function which is  semi-analytic but not globally semi-analytic.

\medskip

\begin{dfn}
 A subset $E\subset \R^n$  is  {\it subanalytic} (resp.  {\it globally subanalytic}) if it
 can be represented as the projection of a semi-analytic (resp. globally semi-analytic) set; more precisely, if
there exists a semi-analytic (resp. globally semi-analytic)
set $Z \subset   \R^{n+p}$, $p\in \N$, such that $E  = \pi(Z)$, where $\pi :    \R^{n+p} \to    \R^n $ is the projection omitting the $p$ last coordinates. In particular,
globally semi-analytic sets are globally subanalytic.

%

   We say that {\it a mapping $f:A \to B$ is  subanalytic} (resp. {\it globally subanalytic}), $A \subset \R^n$, $B\subset \R^m$, if its graph is a subanalytic  (resp. globally subanalytic) subset of $\R^{n+m}$. In the case $B=\R$, we say that  $f$ is a {\it subanalytic} (resp. {\it globally subanalytic}) {\it function}.
\end{dfn}
The advantage of the  globally subanalytic category is that, unlike the globally semi-analytic category,  it is stable under linear projection. Globally  subanalytic sets constitute a nice category to study the geometry of semi-analytic sets: it is also stable under union, intersection, complement, and Cartesian product. Moreover, these sets enjoy many finiteness properties. For instance, they always have finitely many connected components, each of them being  globally subanalytic.

If $X$ is a subanalytic set then $X_{reg}$, which is the set of points at which $X$ is a $C^\infty$ manifold (of dimension $\dim X$ or smaller), is an open dense subanalytic subset of $X$. Another feature of the subanalytic category which will be important for our purpose is the famous \L ojasiewicz's inequality. We shall use it in the following form.
\begin{pro}
\label{pro_lojasiewicz_inequality}(\L ojasiewicz inequality)
Let $f$ and $g$ be two  globally subanalytic functions on a globally subanalytic set $A$. Assume that $f$ is bounded and that  
$\lim_{t\to 0}f(\gamma(t))=0,$
 for every globally subanalytic arc $\gamma:(0,\ep)\to A$ such that $\lim_{t\to 0} g(\gamma(t))=0$.
 Then there exist $N \in \N$ and $C \in \R$ such that for any $x \in A$:
\begin{equation}\label{eq_loj}
|f(x)|^N \leq C|g(x)|.\end{equation}
\end{pro}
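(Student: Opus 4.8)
The plan is to reduce the statement to the classical one‑variable \L ojasiewicz inequality by packaging the behaviour of $f$ near the zero locus of $g$ into a single globally subanalytic function of one real variable. Put $M:=\sup_A|f|<\infty$ and, for $s>0$, define
$$h(s):=\sup\{\,|f(x)|\ :\ x\in A,\ |g(x)|\le s\,\}\in[0,M],$$
with the convention $\sup\emptyset=0$. Since $f$, $g$ and $A$ are globally subanalytic, so is the set $\{(s,x,u):s>0,\ x\in A,\ |g(x)|\le s,\ |f(x)|=u\}$ (absolute values and the inequality $|g|\le s$ preserve global subanalyticity), hence so is its image $V$ under $(s,x,u)\mapsto(s,u)$; as every fibre $V_s$ is contained in the bounded set $[0,M]$, the supremum function $h(s)=\sup V_s$ is again globally subanalytic. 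Moreover $h$ is nondecreasing. If $g$ is bounded away from $0$ on $A$ then (\ref{eq_loj}) holds trivially with $N=1$, so we may assume $\inf_A|g|=0$.

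The heart of the matter is to show $h(0^+):=\lim_{s\to0^+}h(s)=0$; this is the only place the arc hypothesis is used, and it is here that working in the \emph{globally} subanalytic category is essential. Suppose for contradiction that $\delta:=h(0^+)>0$ (the limit exists by monotonicity). Consider the globally subanalytic set
$$W:=\{\,(s,x)\in(0,\infty)\times A\ :\ |g(x)|\le s,\ |f(x)|\ge\delta/2\,\}.$$
By the definitions of $h$ and of $\delta$, for every $s>0$ there is $x\in A$ with $|g(x)|\le s$ and $|f(x)|>\delta/2$; hence the projection of $W$ to the $s$‑coordinate is all of $(0,\infty)$. Compactifying by means of the maps $\hh^k$ of Definition \ref{dfn_subanalytic}, the image of $W$ becomes a bounded subanalytic set whose closure meets $\{s=0\}$, so the curve selection lemma provides a globally subanalytic arc $t\mapsto(s(t),\gamma(t))\in W$, $t\in(0,\eta)$, with $s(t)\to0$ as $t\to0^+$ (the curve $\gamma(t)$ may run off to infinity in $\R^n$, but only its coordinate functions matter here). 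Along this arc $|g(\gamma(t))|\le s(t)\to0$, so $g(\gamma(t))\to0$, while $|f(\gamma(t))|\ge\delta/2$, so $f(\gamma(t))\not\to0$ — contradicting the hypothesis. Therefore $h(0^+)=0$.

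It remains to invoke the one‑variable estimate and to patch the pieces together. The function $h$ is globally subanalytic, nonnegative, and satisfies $h(0^+)=0$; by the monotonicity theorem together with the Puiseux expansion of one‑variable subanalytic functions, there is $\ep'>0$ such that on $(0,\ep')$ either $h\equiv0$, or $h(s)=c\,s^{a}(1+o(1))$ with $c>0$ and $a>0$ rational. Writing $a=p/q$ with positive integers $p,q$, one gets $h(s)^{q}=c^{q}s^{p}(1+o(1))\le 2c^{q}s^{p}\le 2c^{q}s$ on a suitable $(0,\ep'')$, since $p\ge1$. So there exist $N\in\N$ and $C_0>0$ with $h(s)^{N}\le C_0 s$ for $s\in(0,\ep'')$. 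Finally, let $x\in A$: if $g(x)=0$, the constant arc $\gamma\equiv x$ and the hypothesis force $f(x)=0$, so (\ref{eq_loj}) holds; if $0<|g(x)|<\ep''$, then $|f(x)|\le h(|g(x)|)$ directly from the definition of $h$, whence $|f(x)|^{N}\le h(|g(x)|)^{N}\le C_0|g(x)|$; if $|g(x)|\ge\ep''$, then $|f(x)|^{N}\le M^{N}\le (M^{N}/\ep'')\,|g(x)|$. Taking $C:=\max(C_0,\,M^{N}/\ep'')$ yields (\ref{eq_loj}). I expect the principal obstacle to be the middle step: verifying that $h$ is genuinely globally subanalytic and, above all, converting the hypothesis on arcs into a contradiction — curve selection in the compactified setting is precisely the mechanism that makes the global subanalyticity hypothesis do its work.
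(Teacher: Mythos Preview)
The paper does not actually prove this proposition: immediately after the statement it simply records that ``this form is due to \cite{ania} (Proposition 1.1)'' and gives no argument of its own. So there is no proof in the paper to compare with.

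Your argument is correct and follows the standard o-minimal route: package the problem into the one-variable function $h(s)=\sup\{|f(x)|:x\in A,\ |g(x)|\le s\}$, use stability of the globally subanalytic category under projection and supremum over bounded fibres to see that $h$ is globally subanalytic, then invoke curve selection (after compactifying via $\hh^{n+1}$ so that a limit point with $s=0$ exists even when $A$ is unbounded) to force $h(0^+)=0$, and finish with the Puiseux expansion of $h$ near $0$. The compactification step is exactly where the \emph{global} subanalyticity hypothesis earns its keep, as you note; the possibility that the selected arc $\gamma(t)$ escapes to infinity in $\R^n$ is harmless because the hypothesis of the proposition does not require $\gamma$ to extend to $t=0$. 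The final patching over the three ranges of $|g(x)|$ is routine and correct.
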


This inequality originates in \cite{loj}. Several improvements were then obtained. This form is due to \cite{ania} (Proposition 1.1). We refer to \cite{dd,bm} for more about subanalytic sets.
\end{subsection}

\begin{subsection}{$L^p$ cohomology.}\label{sect_lp}
Let $M$ be a $C^\infty$ submanifold of $\R^n$. We   equip $M$  with the Riemannian metric inherited from the ambient
space, this set being endowed with the Euclidean inner product.  This metric gives rise to a measure $vol_M$.

Given a measurable function $f:M\to \R$ (the word {\it measurable} will always refer to this measure), we will denote by $\int_{x\in M} f(x)$ (or sometimes $\int_M f$) the integral of $f$ with respect to $vol_M$ (in other words, we will not match the measure in the notation).

For $p \in [1,\infty)$, we then say that {\it the function $f$ is $L^p$} if it is $L^p$ with respect to the measure $vol_M$, i.e., if $\int_{x\in M} |f(x)|^p <\infty$.   
We will write $|f|_p$ for the {\it $L^p$ norm of $f$} (possibly infinite), i.e., $|f|_p:=\int_{x\in M}|f(x)|^p.$ 

We say that $f$ is $L^\infty$ if there is a constant $C$ such that $|f(x)| \le C$ for almost  every $x\in M$.  The  $L^\infty$ norm of $f$ will be denoted $|f|_\infty$ and will be, as usual, the {\it essential supremum of $f$ on $M$}, i.e., 
$$|f|_\infty:=\esup_{x\in M} |f(x)|:=\inf \{a\in \R: vol_M(|f|^{-1}([a ,\infty))=0\}, $$   with the convention that this infimum is infinite if the considered set is empty.

Differentiable forms  will always be assumed to be at least measurable (i.e., giving rise to a measurable function when composed with a smooth section of multivectors).
Given a differential $j$-form $\omega$ on $M$, we will denote by $|\omega(x)|$ the norm of the linear mapping $\omega(x): \otimes^j T_x M\to \R$ with respect to the metric of $M$. As usual, we will denote by $d$ the exterior differentiation of forms  (for all manifolds and all $j$).

\begin{dfn} 
  Given $p \in [1,+\infty]$, we say that {\it  a differential $j$-form $\omega$
on $M$  is $L^p$}  if  the function $f(x):=|\omega(x)|$ is $L^p$. In the case where $p$ is finite, this means that  
 $$|\omega|_p:=\int_{x\in M} |\omega(x)|^p<\infty.$$
In the case $p=\infty$ this means that 
 there exists a constant $C$ such that: $$|\omega|_\infty:=\esup_{x\in M} |\omega(x)|<\infty.$$

 We denote by $\Omega^j_{p} (M)$ the real  vector space
constituted by the smooth $L^p$ differential $j$-forms $\omega$ on $M$ for which
 $d\omega$ is also $L^p$. The  cohomology groups of the  cochain complex $(\Omega^j_{p} (M) , d)_{j\in \N}$ 
  are called the {\it
$L^p$ cohomology groups of $M$} and will be denoted by
$H^j _{p}(M)$.
\end{dfn}

\end{subsection}

\begin{subsection}{Intersection homology.}\label{sect_ih}
 We recall the definition of intersection homology as it was introduced by Goresky and Macpherson \cite{ih1,ih2}.
\begin{dfn}\label{def_stratification}
Let $X\subset \R^n$ be a subanalytic subset. A {\it 
stratification of $X$} is a finite partition of this set into
 subanalytic $C^\infty$ submanifolds of $\R^n$, called {\it strata}.

  We now are going to define inductively on the dimension of $X$ the locally topologically trivial stratifications of $X$. For $\dim X=0$, every stratification is locally topologically trivial. 

 We denote by $cL$  the open cone over the space $L\subset \R^n$  of vertex at the origin,  $c\emptyset$ being reduced to the origin. Observe that if $L$ is  stratified by $\Sigma$ then $cL$ is stratified by   $cS\setminus \{0\}$, $S \in \Sigma$, and the origin.

 A stratification $\Sigma$ of $X$ is said to be {\it locally topologically trivial} if
 for every $x \in S \in \Sigma$, there is a
subanalytic homeomorphism $$h:U_x \to B(\ori,1) \times  cL,$$  (where $i=\dim S$) with $U_x$ neighborhood of $x$ in $X$ and $L\subset X\setminus \{x\}$  compact subanalytic subset  having a  locally topologically trivial  stratification  such  that
 $h$ maps the strata of $U_x$ (induced stratification) onto the strata of  $ B(\ori,1) \times cL$  (product stratification).
\end{dfn}

\begin{dfn}\label{del pseudomanifolds}
A subanalytic subset $X\subset \R^n$ is an  {\it $m$-dimensional subanalytic pseudomanifold} if
 $X_{reg}$ is an $m$-dimensional manifold  and  $\dim X\setminus X_{reg}<m-1$.

A {\it stratified pseudomanifold} (of dimension $m$) is the data of an $m$-dimensional subanalytic pseudomanifold $X$ together with  a locally topologically trivial stratification $\Sigma$ of $X$ having no stratum of dimension $(m-1)$. 
\end{dfn}

\begin{dfn}\label{dfn_ih}
Let $(X,\Sigma)$ be an  $m$-dimensional stratified
pseudomanifold and let $X_i$ denote the union of all the strata of $\Sigma$ of dimension less than or equal to $i$. 
A {\it perversity} is a sequence of integers $p = (p_2, p_3,\dots, p_m)$
such that $p_2 = 0$ and $p_{k+1} = p_k$ or $p_k + 1$.  Given a perversity $p$, a subset
$Y \subset X$ is called  {\it $(p,i)$-allowable} if for all $k$ $$\dim Y \cap
X_{m-k} \leq p_k+i-k.$$ 

  Define $I^{p}C_i (X)$ as the subgroup of
$C_i(X)$ consisting of those chains $\sigma$ such that $|\sigma|$
is $(p, i)$-allowable and $|\partial \sigma|$ is $(p, i -
1)$-allowable.
 The {\it $i^{th}$ intersection homology group of perversity $p$}, denoted
$I^{p}H_i (X)$, is the $i^{th}$ homology group of the chain
complex $I^{p}C_\bullet(X).$  The {\it $i^{th}$ intersection cohomology group of perversity $p$}, denoted
$I^{p}H^i (X)$, is defined as ${\bf Hom} (I^{p}H_i (X),\R)$.
\end{dfn}

In \cite{ih1,ih2} Goresky and MacPherson have proved that  these homology groups are
finitely generated and independent of the (locally topologically trivial) stratification. Since topologically trivial
stratifications exist for all subanalytic pseudomanifolds \cite{tesc} (Whitney $(b)$-regular stratifications do have this property), we
 will not always specify the chosen
stratification.

Moreover, Goresky and MacPherson also proved that their theory satisfy a
generalized version of Poincar\'e duality. We set $\bt:=(0,1,\dots,m-2)$.

\begin{thm}(Generalized Poincar\'e duality
\cite{ih1,ih2})\label{thm_poincare_ih}
Let $X$ be a compact oriented stratified pseudomanifold and let $p$ and $q$ be
perversities with $p+q=\bt$. For all $j$, we have:
$$I^p H^j(X)\simeq I^q H^{m-j}(X).$$
\end{thm}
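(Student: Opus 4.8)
The statement is the Goresky--MacPherson generalized Poincar\'e duality theorem; the route I would follow is their original geometric argument, which adapts well to the subanalytic framework used here.

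The plan is to build a nondegenerate \emph{intersection pairing} on intersection homology and read the isomorphism off from it. Fix a locally topologically trivial stratification $\Sigma$. The first step is a \emph{moving lemma} in the stratified category: any subanalytic $(p,i)$-allowable cycle $\xi$ and any subanalytic $(q,m-i)$-allowable cycle $\eta$ can be perturbed, through a subanalytic isotopy supported near $|\xi|\cup|\eta|$ and preserving allowability, so that $|\xi|$ meets each stratum $S$ transversally, likewise $|\eta|$, and $|\xi|\cap|\eta|$ is a finite subset of $X_{reg}$. Subanalyticity makes this comfortable: triangulate $(X,\Sigma)$ compatibly with the carriers and argue stratum by stratum; the bounds $\dim(|\xi|\cap X_{m-k})\le p_k+i-k$ and $\dim(|\eta|\cap X_{m-k})\le q_k+(m-i)-k$ together with $p_k+q_k=\bt_k=k-2<k$ force the intersection off the singular set after a generic perturbation. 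Using the orientation of $X_{reg}$, the transverse intersection $\xi\cdot\eta$ is then a well-defined signed count of points, and applying the moving lemma also to the traces of allowable homologies shows that $\xi\cdot\eta$ depends only on the classes, yielding a bilinear pairing $I^pH_i(X)\otimes I^qH_{m-i}(X)\to\R$.

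It then suffices to show this pairing is nondegenerate, since all four groups $I^pH_i$, $I^pH^i$, $I^qH_{m-i}$, $I^qH^{m-i}$ are finite dimensional over $\R$ (cited from \cite{ih1,ih2}) and nondegeneracy forces $\dim I^pH_i=\dim I^qH_{m-i}$, hence $I^pH^i\simeq I^qH^{m-i}$ as $\R$-vector spaces. Nondegeneracy I would prove by induction on the depth of $\Sigma$. The base case depth $0$ is $X=X_{reg}$ a closed oriented manifold, where the pairing is the classical one. For the inductive step, take a stratum $S$ of minimal dimension $d$, a subanalytic conical neighborhood $N$ of $S$ with fibre the open cone $cL$ over a compact stratified pseudomanifold $L$ of strictly smaller depth (provided by local topological triviality), and set $U=N$, $V=X\setminus S$. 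The cone formula computes $I^pH_\ast(cL)$ as a truncation of $I^pH_\ast(L)$, and the numerical point is that for $q=\bt-p$ the truncation ranges on $L$ (of dimension $m-d-1$) are complementary, so the inductive hypothesis for $L$ makes the local pairing over $U$ nondegenerate in the relevant range, while over $V$ it is nondegenerate by the depth induction. A Mayer--Vietoris comparison of the pairings on $U$, $V$, $U\cap V$ and the five lemma upgrade this to $X=U\cup V$.

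The main obstacle is the moving lemma itself: perturbing allowable chains into stratified general position \emph{without losing allowability}, controlling the traces of the homotopies, and checking that the resulting point count is independent of all choices; this is exactly the technical heart of \cite{ih1}. Organizing the cone computation and the perversity bookkeeping so that indices match on the two sides of the Mayer--Vietoris step is the other delicate point. One could instead bypass the geometric moving lemma by passing to the Deligne sheaf complex $\mathbf{IC}^\bullet_p$, characterizing it by its support and cosupport axioms, verifying that its Verdier dual $\mathbb{D}_X\mathbf{IC}^\bullet_p$ satisfies the axioms defining $\mathbf{IC}^\bullet_q[m]$ precisely when $p+q=\bt$, and then invoking Verdier duality on the compact oriented $X$; there the difficulty moves to checking the cosupport axiom for the dual complex.
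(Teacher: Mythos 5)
This theorem is not proved in the paper: it is stated as a black box attributed to Goresky and MacPherson, cited from \cite{ih1,ih2}, and then invoked (via the special case $p=0$, $q=\bt$, equation (1.1)) in the proofs of Corollaries \ref{cor_1} and \ref{cor_2}. There is therefore no proof in the text to compare yours against. That said, your sketch is a faithful outline of the two arguments that exist in the literature being cited: the geometric moving-lemma/intersection-pairing proof of \cite{ih1}, where the perversity numerics $p_k+q_k=k-2$ force the generically perturbed carriers to miss the singular strata (expected intersection dimension $p_k+q_k-k=-2$ in each $X_{m-k}\setminus X_{m-k-1}$, and $0$ in $X_{reg}$), followed by a depth induction with the cone formula and Mayer--Vietoris; and, in your final paragraph, the sheaf-theoretic proof of \cite{ih2} via the Deligne complex, the support/cosupport axioms, and Verdier duality. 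Since the paper merely cites the result, the correct thing to have noticed is that no proof is called for here; your summary of the external references is accurate, but would belong in the bibliography rather than in this paper's body.
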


\medskip

In particular, in  the case of the perversities $p=0=(0,\dots,0)$ and $q=\bt$, we get \begin{equation}\label{eq_gm_ot}
I^0 H^j(X)\simeq I^\bt H^{m-j}(X).\end{equation}
\end{subsection}

\begin{subsection}{The de Rham theorems.}
 In this section we  state our de Rham theorems. The proofs require technical preliminaries and will appear in section \ref{sect_proof_de_rham_lp}. 

\begin{thm}\label{thm_de_Rham_lq}Given a   bounded subanalytic submanifold $M$ of $ \R^n$, we have
for each  $p\in [1,\infty)$ sufficiently close to $1$ and each integer $j$: 
$$H_{p}^j(M)\simeq H^j(M).$$
\end{thm}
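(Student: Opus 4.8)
The plan is to establish the isomorphism $H^j_p(M) \simeq H^j(M)$ for $p$ close to $1$ by the classical sheaf-theoretic route: show that the presheaf $U \mapsto H^j_p(U)$ sheafifies to a (soft, or fine) resolution of the constant sheaf $\R_{cl(M)}$ on $cl(M)$ — or rather, work with a complex of sheaves on $cl(M)$ whose sections over $M$ compute $L^p$ cohomology — and then identify its hypercohomology with $H^*(M;\R) \simeq H^*(cl(M);\R)$, the last equality holding because $M$ is dense in $cl(M)$ and $cl(M)$ deformation retracts onto suitable compact pieces by the local conic structure (so $M \hookrightarrow cl(M)$ is a homotopy equivalence; here the hypothesis that $M$ is a subanalytic submanifold, hence $cl(M)$ admits a locally conical structure, is used). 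Concretely, I would define for each open $U \subset cl(M)$ the complex $\Omega^\bullet_p(U \cap M)$ and let $\mathcal{A}^j_p$ be the associated sheaf; the key point is that $d$ makes $0 \to \R_{cl(M)} \to \mathcal{A}^0_p \to \mathcal{A}^1_p \to \cdots$ a fine resolution, which reduces everything to a local statement.

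That local statement is precisely an $L^p$ Poincaré lemma, and this is where the real work lies and where I expect the main obstacle. For an ordinary (interior) point of $M$ the Poincaré lemma is the classical one, modulo checking that the standard homotopy operator preserves the $L^p$ condition on a small ball — routine since the metric is bi-Lipschitz to the Euclidean one there. The genuinely hard case is a point $x_0 \in \delta M = cl(M) \setminus M$: one must show that for a suitable neighborhood basis $U_{x_0}$ the complex $\Omega^\bullet_p(U_{x_0} \cap M)$ has the cohomology of the cone, i.e., is acyclic in positive degrees and equals $\R$ in degree $0$ — at least when $p$ is close enough to $1$. The strategy (this is the content of Lemmas \ref{Poincare_lem_l1_simple} and \ref{lem x normal Poincare lemma} referenced in the introduction) is to use Theorem \ref{thm_local_conic_structure}: pull the $L^p$ forms back along the subanalytic bi-Lipschitz conic homeomorphism, integrate along the cone direction to build an explicit homotopy operator $K$, and estimate its $L^p$ norm. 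Because the cone contraction shrinks distances, the pullback of the volume form and of the form $\omega$ both pick up powers of the radial parameter; integrating $r^{-p} \cdot r^{\dim - 1}$ (schematically) near $r=0$ converges exactly when $p$ is small, which is why the hypothesis "$p$ sufficiently close to $1$" enters. One also has to deal with the fact that the conic homeomorphism is only Lipschitz and subanalytic, not smooth, so the operator $K$ produces merely weakly differentiable (not $C^\infty$) forms; one invokes the regularization operators of Theorem \ref{thm_regularization_operators} to smooth them while staying $L^p$, and one uses stratification theory to control the failure of smoothness near the lower strata.

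With the $L^p$ Poincaré lemmas in hand, the endgame is standard homological algebra. The fineness of $\mathcal{A}^j_p$ (it is a module over the sheaf of smooth bounded functions on $cl(M)$, which admits partitions of unity — here boundedness of $M$ and the subanalytic structure give the needed bump functions with bounded derivatives, so multiplication preserves $\Omega^\bullet_p$) means $H^j(cl(M), \mathcal{A}^\bullet_p) = H^j(\Gamma(cl(M), \mathcal{A}^\bullet_p)) = H^j_p(M)$ by de Rham-type abstract nonsense, while exactness of the resolution gives $H^j(cl(M), \mathcal{A}^\bullet_p) = \mathbb{H}^j(cl(M), \R_{cl(M)}) = H^j(cl(M);\R)$. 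Finally $H^j(cl(M);\R) \simeq H^j(M;\R) = H^j(M)$ because the inclusion $M \hookrightarrow cl(M)$ is a homotopy equivalence by the local conic structure (every point of $cl(M)$ has a conical neighborhood whose intersection with $M$ is the "open" part of the cone, still contractible onto the same interior stratum point, so the inclusion is a weak equivalence of locally contractible spaces). Threading the "sufficiently close to $1$" uniformly — i.e., extracting a single $p_0 > 1$ that works simultaneously at every point of the compact set $cl(M)$ and in every degree $j$ — uses the subanalyticity (finiteness) of the stratification together with Łojasiewicz's inequality (Proposition \ref{pro_lojasiewicz_inequality}) to bound the relevant exponents, and is the second delicate point after the local estimate itself.
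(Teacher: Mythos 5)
There is a genuine conceptual gap at the heart of your proposal, and it is worth pinpointing because the paper's actual argument is specifically designed to avoid it.

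You propose to show that $0 \to \R_{cl(M)} \to \mathcal{A}^0_p \to \mathcal{A}^1_p \to \cdots$ is a (fine) resolution of the constant sheaf on $cl(M)$, and that the local exactness at a boundary point $x_0 \in \delta M$ is the ``$L^p$ Poincar\'e lemma'': $\Omega^\bullet_p(U_{x_0}\cap M)$ is acyclic in positive degrees ``when $p$ is close enough to $1$.'' This is false, and in fact it is exactly the \emph{opposite} of what happens. The local acyclicity $H^j_p(U_{x_0}\cap M)=0$ for $j>0$ holds precisely when $p$ is \emph{large} (Lemma \ref{lem x normal Poincare lemma}), and it is the ingredient that makes $L^p$ cohomology compute intersection cohomology $I^\bt H^j(X)$, not $H^j(M)$. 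When $p$ is close to $1$, the local $L^p$ cohomology of the punctured cone $U_{x_0}\cap M$ is \emph{not} trivial: it agrees with the ordinary cohomology of $U_{x_0}\cap M$, i.e., the cohomology of the link of $x_0$. Your second supporting claim, that $M\hookrightarrow cl(M)$ is a homotopy equivalence ``by local conic structure,'' is equally false: in the paper's own example $X=$ the suspension of the torus, $H^1(X_{reg})\simeq\R^2$ while $H^1(X)=0$. (These two errors happen to be mutually compensating, which is why the endgame of your proposal produces the correct-sounding statement; but each step in isolation is wrong.) Your heuristic for the exponent condition is also reversed: the radial integral $\int_0^1 s^{-k/p}ds$ appearing in the bound for the operator $\K_0$ (integration \emph{toward} the vertex) converges when $p>k+1$, i.e.\ for $p$ \emph{large}; this is Proposition \ref{pro_K_0_borne_pr_L_p}, not a small-$p$ estimate.

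The paper's route to the small-$p$ theorem is genuinely different. It works with \emph{compactly supported} sections $\F^\bullet_{p,c}(B(x_0,\ep)\cap X)$, where ``compactly supported'' means vanishing near the sphere $S(x_0,\ep)$ but not necessarily near the singular locus, and proves \emph{these} cohomology groups vanish (Lemma \ref{Poincare_lem_l1_simple}). The relevant homotopy operator is then $\K_1$ (integration from the outer boundary inward), not $\K_0$; no singular integral at $r=0$ occurs, and the $L^p$ bound for $\K_1$ at $p$ close to $1$ is obtained by H\"older duality from the $L^q$ bound for $\K_0$ at $q$ large (Proposition \ref{pro_K_1_borne_pr_L_p}). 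The passage from this local vanishing to the global isomorphism does not go through a resolution of $\R_{cl(M)}$: instead, one Hom-dualizes the cosheaves $\F^j_{p,c}$ and $\D^j_c$ (compactly supported singular cochains) to get complexes of flabby sheaves $\mathcal{G}^\bullet_p$ and $\E^\bullet$ on $X$, and checks that the induced morphism is an isomorphism on sections over small balls, which, since these sheaves are flabby, yields an isomorphism on global cohomology and hence the desired identification $H^j_p(M)\simeq H^j(M)$. So the structure is Poincar\'e--Lefschetz (compactly supported local cohomology and flabby duals), not a de Rham resolution of the constant sheaf; the latter is what one does for $p$ large, and then the answer is intersection cohomology, not $H^j(M)$.

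Concretely, the step of your proposal that would fail on a worked example is already visible for the open cone over a circle: the punctured cone $U_{x_0}\cap M$ is an annulus, and $H^1_p$ of an annulus is $\R$, not $0$, for $p$ close to $1$ (for instance, the angular form $d\theta$ is $L^p$ near the vertex for $p$ small enough and represents a nontrivial class). So the complex $\mathcal{A}^\bullet_p$ is not a resolution of $\R_{cl(M)}$ and the abstract-nonsense endgame does not apply in the form you describe.
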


Such a theorem is of course no longer true without the subanalycity assumption.
 We will also show that the isomorphism is given by integration of forms on simplices. By ``$p$ sufficiently close to $1$'', we mean that there is $p_{0}\in (1,\infty]$ such that this statement holds for all $1\le p < p_{0}$. The bound for $p$ will be provided by the famous \L ojasiewicz's inequality (see Proposition \ref{pro_lojasiewicz_inequality}).

We then will improve Theorem \ref{thm_intro_linfty} by showing:

\begin{thm}\label{thm_de_Rham_linfinity}Given a   bounded  subanalytic $m$-dimensional submanifold $M$ of $ \R^n$ such that $\dim \delta M\le m-2$, we have
for all $p\in [1,\infty]$ large enough and each integer $j$: 
$$H_{p} ^j(M) \simeq I^{\bt} H^j  (X),$$
where $X$ denotes the closure of $M$.
\end{thm}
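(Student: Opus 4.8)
With $X$ the closure of $M$ as in the statement, note first that the hypothesis $\dim\delta M\le m-2$ forces $M$ to be open and dense in $X$ and $X$ to be a compact subanalytic $m$-dimensional pseudomanifold, so that $I^{\bt}H^j(X)$ is defined and independent of the chosen locally topologically trivial stratification $\Sigma$. The plan is then the classical sheaf-theoretic comparison, in the spirit of Cheeger and of the Goresky--MacPherson axiomatic characterization of the intersection cohomology complex. On $X$ I would consider the complex of sheaves $\mathcal{L}_p^\bullet$ associated with the presheaf $U\mapsto\{\omega\in\Omega^\bullet(M\cap U):\omega\text{ and }d\omega\text{ are locally }L^p\text{ near every point of }U\}$. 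Since $X$ is compact, $\Gamma(X,\mathcal{L}_p^\bullet)=\Omega_p^\bullet(M)$, hence $H_p^j(M)=\mathbb{H}^j(X,\mathcal{L}_p^\bullet)$ provided $\mathcal{L}_p^\bullet$ is a complex of soft sheaves. Softness is the first place where section~\ref{sect_weakly_smooth} enters: one first replaces $\mathcal{L}_p^\bullet$ by the complex of \emph{weakly smooth} $L^p$ forms, which is stable under multiplication by Lipschitz functions and, by the regularization operators of Theorem~\ref{thm_regularization_operators}, has the same cohomology, and then one uses subanalytic Lipschitz partitions of unity.

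The core of the argument is to identify, for $p$ large, $\mathcal{L}_p^\bullet$ with the maximal-perversity intersection cochain complex of $X$ through Deligne's axioms; concretely one computes the stalk cohomology $\mathcal{H}^j(\mathcal{L}_p^\bullet)_{x_0}=\varinjlim_{U\ni x_0}H_p^j(M\cap U)$ and checks that it has the local behaviour dictated by $\bt$. Two local computations are needed. On $X_{reg}$, the complex $\mathcal{L}_p^\bullet$ must resolve $\mathbb{R}_{X_{reg}}$: this is the $L^p$ Poincar\'e lemma on a manifold (Lemma~\ref{Poincare_lem_l1_simple}), proved by exhibiting the homotopy operators of section~\ref{sect_hom_hop} on a bounded subanalytic chart and bounding their $L^p$ norm there; the same lemma also handles the flat factor $B(\ori,1)$ in the conical model below. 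Near a point $x_0$ of a stratum $S$ of dimension $i$, Theorem~\ref{thm_local_conic_structure} provides a subanalytic and Lipschitz conic trivialization $U_{x_0}\cong B(\ori,1)\times cL$ with $L$ compact; after reducing away the flat factor by Lemma~\ref{Poincare_lem_l1_simple}, the stalk is computed by the $L^p$ cohomology of the cone $cL$ (more precisely of its regular part), and Lemma~\ref{lem x normal Poincare lemma} (the ``$x$ normal'' Poincar\'e lemma) shows that, once $p$ exceeds a threshold, this cone cohomology coincides with the stalk of the maximal-perversity complex: it vanishes in degrees exceeding $\bt(m-i)=m-i-2$ and, in lower degrees, reduces to the $L^p$ cohomology of the link, hence, by descending induction on the strata, to its intersection cohomology. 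The delicate point is that the conic homeomorphism of Theorem~\ref{thm_local_conic_structure} is only Lipschitz and subanalytic --- neither bi-Lipschitz nor smooth --- so one must simultaneously control how the $L^p$ condition degrades when transported through it and repair the loss of smoothness caused both by this transport and by the homotopy operators. The degradation of the $L^p$ condition is polynomial by the \L ojasiewicz inequality (Proposition~\ref{pro_lojasiewicz_inequality}), hence is absorbed exactly once $p$ is large; the loss of smoothness is repaired by the regularization operators of Theorem~\ref{thm_regularization_operators} and the stratification arguments of sections~\ref{sect_hor_C1}, \ref{sect_strat} and \ref{sect_lip_lp}. As $\Sigma$ has finitely many strata, a single $p_0<\infty$ serves for all of them, and if $M\subsetneq X_{reg}$ an $L^p$-removable-singularity argument across the codimension $\ge 2$ set $X_{reg}\setminus M$ (again for $p$ large) reduces matters to $M=X_{reg}$.

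Granting the two local computations, Deligne's characterization produces a quasi-isomorphism between $\mathcal{L}_p^\bullet$ and the maximal-perversity intersection cochain complex of $X$ in the derived category of sheaves on $X$, whence
$$H_p^j(M)=\mathbb{H}^j(X,\mathcal{L}_p^\bullet)\simeq I^{\bt}H^j(X).$$
For $p=\infty$ this recovers Theorem~\ref{thm_intro_linfty}, and, exactly as there, the isomorphism is the one induced by integration of forms on allowable simplices. I expect the main obstacle to be the conical stalk computation just described: reconciling the metric nature of the $L^p$ condition with the fact that Theorem~\ref{thm_local_conic_structure} only yields a Lipschitz, subanalytic, non-smooth and non-bi-Lipschitz conic structure. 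It is precisely here that one must use, at the same time, the \L ojasiewicz inequality --- to see that the $L^p$ defect accumulated along the trivialization grows only polynomially, so that it disappears for $p$ large and the cone cohomology comes out equal to the maximal-perversity one rather than to something smaller --- and the de Rham style regularization operators together with stratification theory, to turn the weakly differentiable forms produced by pulling back along a Lipschitz map and by applying the homotopy operators into genuine smooth $L^p$ forms, the whole construction being kept compatible with the sheaf structure so that both the softness of $\mathcal{L}_p^\bullet$ and its identification with the intersection cochain complex survive.
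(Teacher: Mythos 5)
Your proposal follows a genuinely different route than the paper, and the Deligne--axiom step as you describe it contains a concrete error about the local computation. The paper's proof is in fact much shorter: it observes that, by the large-$p$ Poincar\'e lemma (Lemma~\ref{lem x normal Poincare lemma}), the complexes of sheaves $(\F^\bullet_p)$ and $(\F^\bullet_\infty)$ are both soft resolutions of \emph{one and the same sheaf} $\A$ (the sheaf of locally constant $0$-forms on $V\cap M$), so their global-section cohomologies agree, i.e.\ $H^j_p(M)\simeq H^j_\infty(M)$; and then it simply cites Theorem~\ref{thm_intro_linfty} for $H^j_\infty\simeq I^\bt H^j(X)$. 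No identification of $\F^\bullet_p$ with an intersection cochain complex is made, and the nontrivial topological input is delegated entirely to the earlier $L^\infty$ theorem.

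The error in your stalk computation is this: you assert that the local $L^p$ cohomology near a point of a codimension-$(m-i)$ stratum ``vanishes in degrees exceeding $\bt(m-i)=m-i-2$ and, in lower degrees, reduces to the $L^p$ cohomology of the link.'' But Lemma~\ref{lem x normal Poincare lemma} gives vanishing of $H^j_p(\ux)$ in \emph{all} degrees $j>0$ once $p$ is large; there is no range of low degrees in which the link cohomology survives. In the paper's own example (the suspension of the torus, $m=3$, isolated singularities of codimension $3$, $\bt(3)=1$), your prediction is that the stalk has $\R^2$ in degree $1$ (the $H^1$ of the torus link), whereas the Poincar\'e lemma gives $0$ there. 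So $\F^\bullet_p$ is \emph{not} quasi-isomorphic to $IC^\bullet_\bt$; its stalk cohomology is concentrated in degree $0$, hence it is quasi-isomorphic to a single sheaf (namely $\A\simeq j_*\R_M$), which is the Deligne complex for the \emph{complementary} perversity $0$, not for $\bt$. The conflation comes from forgetting that for the chain-level intersection cohomology $I^\bt H^j(X)={\bf Hom}(I^\bt H_j(X),\R)$, Verdier duality trades $\bt$ for $0$ on the sheaf side: one has $I^\bt H^j(X)\simeq\mathbb{H}^j(X,IC^\bullet_0)$, not $\mathbb{H}^j(X,IC^\bullet_\bt)$. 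Correcting the perversity makes the stalk computation match, but then you would still need to establish this duality between $\mathbb{H}^j(X,IC^\bullet_0)$ and $I^\bt H^j(X)$ separately --- which is precisely the content the paper imports via Theorem~\ref{thm_intro_linfty} without re-proving. Relatedly, Lemma~\ref{Poincare_lem_l1_simple} that you invoke on $X_{reg}$ is the $p$-close-to-$1$ compactly-supported lemma used for Theorem~\ref{thm_de_Rham_lq}, not the one relevant here; on smooth points the ordinary Poincar\'e lemma suffices. Your remarks about \L ojasiewicz controlling the degradation through the Lipschitz conic map, the regularization operators, and the stratification theory are all on target --- they are exactly the technical ingredients behind Lemma~\ref{lem x normal Poincare lemma} --- but the sheaf-theoretic framing built on top of them needs the perversity fixed, and even then it duplicates, rather than uses, the result of \cite{vlinfty}.
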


 Again, by ``$p$ large enough'', we   mean that there is $p_{0}\in [1,\infty)$ such that this statement holds for all $p > p_{0}$.
\end{subsection}

\begin{subsection}{$L^q$ as a Poincar\'e dual for $L^p$} Given  $p\in [1,\infty]$, we call the number  $q\in [1,\infty]$ that satisfies $\frac{1}{p}+\frac{1}{q}=1$,  the {\it H\"older conjugate of $p$}.
  Thanks to  Goresky
and MacPherson's generalized Poincar\'e duality, we can derive explicit
topological criteria on the singularity to determine whether
$L^p$ cohomology is Poincar\'e dual to $L^q$ cohomology.

\begin{cor}\label{cor_1}
Let $X$ be a compact oriented $m$-dimensional subanalytic pseudomanifold.  Take $p\in [1,\infty]$ and denote by  $q$ its H\"older conjugate. If $H^j(X_{reg})\simeq
I^0H^j(X)$ then, for $p$ sufficiently close to $1$, $L^p$ cohomology is Poincar\'e dual to $L^q$
cohomology in dimension $j$, in the sense that $$H^j_p(X_{reg})\simeq
H_{q}^{m-j}(X_{reg}).$$
\end{cor}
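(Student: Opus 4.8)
The plan is to concatenate the two de Rham theorems with Goresky--MacPherson duality, applied to the manifold $M:=X_{reg}$. First I would check that $X_{reg}$ fits the hypotheses of both de Rham theorems: it is a bounded subanalytic submanifold of $\R^n$ (bounded since $X$ is compact) which is an $m$-dimensional manifold by Definition \ref{del pseudomanifolds}; moreover $\delta X_{reg}=cl(X_{reg})\setminus X_{reg}=X\setminus X_{reg}$ because $X_{reg}$ is dense in $X$, and this set has dimension $\le m-2$ by the pseudomanifold condition. In particular $cl(X_{reg})=X$, so Theorem \ref{thm_de_Rham_linfinity} will indeed compute $H^\bullet_q(X_{reg})$ in terms of $I^\bt H^\bullet(X)$.

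Next I would fix the admissible range of $p$. Theorem \ref{thm_de_Rham_lq} provides some $p_0\in(1,\infty]$ with $H^j_p(X_{reg})\simeq H^j(X_{reg})$ for $1\le p<p_0$, and Theorem \ref{thm_de_Rham_linfinity} provides some $q_0\in[1,\infty)$ with $H^{m-j}_q(X_{reg})\simeq I^\bt H^{m-j}(X)$ whenever $q>q_0$. Since the H\"older conjugate $q=\frac{p}{p-1}$ (with $q=\infty$ for $p=1$) decreases continuously from $\infty$ to $1$ as $p$ runs over $[1,\infty]$, the inequality $q>q_0$ holds exactly for $p$ in an interval $[1,p_1)$ with $p_1\in(1,\infty]$. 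Hence for any $p$ with $1\le p<\min(p_0,p_1)$ both de Rham isomorphisms hold simultaneously; this is the precise meaning of ``$p$ sufficiently close to $1$'' in the statement.

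Finally I would chain the isomorphisms in degree $j$:
$$H^j_p(X_{reg})\ \simeq\ H^j(X_{reg})\ \simeq\ I^0 H^j(X)\ \simeq\ I^\bt H^{m-j}(X)\ \simeq\ H^{m-j}_q(X_{reg}),$$
the first isomorphism being Theorem \ref{thm_de_Rham_lq}, the second the hypothesis, the third the Goresky--MacPherson duality (\ref{eq_gm_ot}) (legitimate because $X$ is compact and oriented and admits, by the remark following Theorem \ref{thm_poincare_ih} together with the pseudomanifold condition, a locally topologically trivial stratification with no stratum of dimension $m-1$), and the last being Theorem \ref{thm_de_Rham_linfinity} applied in degree $m-j$ with exponent $q$.

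I do not foresee a genuine obstacle: all the substance is carried by Theorems \ref{thm_de_Rham_lq} and \ref{thm_de_Rham_linfinity} and by generalized Poincar\'e duality. The only points needing a little care are the bookkeeping that $X=cl(X_{reg})$ with $\dim\delta X_{reg}\le m-2$ (so that Theorem \ref{thm_de_Rham_linfinity} genuinely applies to $X_{reg}$), and the verification that the two size conditions on the exponents are compatible --- which works precisely because $q\to\infty$ as $p\to1$, forcing the choice of $p$ near $1$.
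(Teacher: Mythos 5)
Your proof is correct and follows exactly the route the paper intends: chain Theorem \ref{thm_de_Rham_lq}, the hypothesis $H^j(X_{reg})\simeq I^0H^j(X)$, the Goresky--MacPherson duality (\ref{eq_gm_ot}), and Theorem \ref{thm_de_Rham_linfinity} in degree $m-j$. You merely make explicit the bookkeeping (that $cl(X_{reg})=X$ with $\dim\delta X_{reg}\le m-2$, and that the two exponent conditions are compatible since $q\to\infty$ as $p\to1$) which the paper's one-sentence proof leaves to the reader.
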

\begin{proof}
This  is a consequence of Theorems \ref{thm_de_Rham_lq},
\ref{thm_de_Rham_linfinity}, and Goresky and MacPherson's generalized
Poincar\'e duality (\ref{eq_gm_ot}).
\end{proof}

\medskip

\begin{cor}\label{cor_2}Let $M \subset \R^n$ be a bounded subanalytic oriented  $m$-dimensional $C^\infty$ submanifold.  Take $p\in [1,\infty]$ and denote by  $q$ its H\"older conjugate.
 If  $\dim \delta M=k$ then,  for $p$ sufficiently close to $1$, $L^p$ cohomology is Poincar\'e dual to  $L^q$
cohomology in dimension $j$, for each  $j<m-k-1$, i.e., for each such $j$:
$$H_{p} ^j(M) \simeq  H_q^{m-j} (M).$$
\end{cor}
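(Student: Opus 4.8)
The plan is to deduce Corollary \ref{cor_2} from Corollary \ref{cor_1} by relating the manifold $M$ to a compact subanalytic pseudomanifold. First I would set $X:=cl(M)$, so that $X=M\cup\delta M$ with $\dim\delta M=k\le m-2$ (the bound $j<m-k-1$ forces $k\le m-2$ for there to be anything to prove, since otherwise the range of $j$ is empty). By Definition \ref{del pseudomanifolds}, $X$ is then an $m$-dimensional subanalytic pseudomanifold with $X_{reg}=M$ (shrinking $M$ if necessary so that $M$ consists only of smooth points of $X$; since $M$ is already a $C^\infty$ submanifold and is open dense in $X$, in fact $M\subseteq X_{reg}$, and the possible extra smooth points of $X$ lying in $\delta M$ form a subanalytic set of dimension $\le k<m$, which does not affect homology or $L^p$ cohomology in the relevant degrees — but to be safe one works with $X_{reg}$ throughout). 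Orientability and compactness of $X$ follow from those of $M$ together with $\dim\delta M\le m-2$.

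The heart of the matter is to verify the hypothesis of Corollary \ref{cor_1}, namely that $H^j(X_{reg})\simeq I^0H^j(X)$ in the range $j<m-k-1$. The intersection cohomology $I^0H^j(X)$ with the zero perversity is, by general Goresky--MacPherson theory, computed by allowable cochains, and there is always a natural map $I^0H^j(X)\to H^j(X_{reg})$ induced by restriction of cochains (equivalently, $0$-perversity intersection homology maps to ordinary homology of the regular part). The key point is that when $j$ is small relative to the codimension of the singular set, this map is an isomorphism: allowability imposes no constraint on chains of dimension $\le m-k-1$ because $\dim|\sigma|\cap X_{m-\ell}\le \dim X_{m-\ell}\le k$, and for $(0,i)$-allowability one needs $k\le i-\ell$, i.e. $i\ge k+\ell\ge k+2$; so chains of dimension $i<k+2$ are automatically allowable, and dualizing, $I^0H^j(X)\simeq H^j(X)$ for $j<k+2$... wait — the correct threshold comes from comparing $I^0H_\bullet$ with $H_\bullet(X_{reg})$ rather than $H_\bullet(X)$. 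The cleanest route is: the inclusion $X_{reg}\hookrightarrow X$ is $(m-k-1)$-connected on the level that matters because $\delta M$ has codimension $\ge k+2$... actually codimension $m-k\ge 2$, and a subanalytic set of dimension $k$ can be triangulated so that $X$ deformation retracts onto a neighborhood, giving $H^j(X_{reg})\simeq H^j(X)$ for $j\le m-k-2=\,$(one less than $m-k-1$). Combining this with the standard fact that $I^0H^j(X)\simeq H^j(X)$ in the same low-degree range (allowability of small-dimensional chains as above), I get $H^j(X_{reg})\simeq I^0H^j(X)$ for $j<m-k-1$, which is exactly the hypothesis needed.

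With that hypothesis checked, I apply Corollary \ref{cor_1}: for $p$ sufficiently close to $1$ and $q$ its H\"older conjugate, $H^j_p(X_{reg})\simeq H^{m-j}_q(X_{reg})$ in each such dimension $j$. Since $X_{reg}=M$ (in the relevant degrees, as discussed), this reads $H^j_p(M)\simeq H^{m-j}_q(M)$ for each $j<m-k-1$, which is the assertion. The main obstacle I anticipate is the bookkeeping in the previous paragraph: making the degree bound $j<m-k-1$ come out exactly right requires care in matching the connectivity estimate for $X_{reg}\hookrightarrow X$ against the allowability threshold for $I^0$, and one must be slightly careful about whether $M$ equals $X_{reg}$ on the nose or only after discarding a lower-dimensional subanalytic piece of $\delta M$ — either way this lower-dimensional discrepancy is invisible to $H^j$ and $H^j_p$ for $j<m-k-1$, but the argument should say so explicitly. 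A secondary point is that orientedness and the pseudomanifold condition (no $(m-1)$-dimensional stratum) for $X$ must be recorded so that Theorem \ref{thm_poincare_ih} applies through Corollary \ref{cor_1}; these are immediate from $\dim\delta M=k\le m-2$.
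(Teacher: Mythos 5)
Your overall strategy (reduce to Corollary \ref{cor_1} by verifying $H^j(X_{reg})\simeq I^0H^j(X)$ in the given range, then chain through GM duality and the two de Rham theorems) is the same one the paper uses, but your verification of that hypothesis has a genuine gap. You claim that $(0,i)$-allowable chains of small dimension are \emph{unconstrained}, arriving at the assertion ``chains of dimension $i<k+2$ are automatically allowable'' and hence $I^0H^j(X)\simeq H^j(X)$. This is backwards. For the zero perversity, the allowability condition $\dim|\sigma|\cap X_{m-\ell}\le i-\ell$ becomes \emph{more} restrictive as $i$ decreases: already a $0$-chain (a point) supported on the singular set fails to be $(0,0)$-allowable, since $\dim|\sigma|\cap X_{m-\ell}=0>0-\ell$. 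More generally, if $|\sigma|$ meets a singular stratum of dimension $s$, take $\ell=m-s$ to get $\dim|\sigma|\cap X_s\le i-(m-s)$, which is impossible when $s<m-i$. In the corollary's setting the stratification can be chosen so that every singular stratum has dimension at most $k$, and then for $i<m-k$ every singular stratum has dimension $<m-i$; so a $(0,i)$-allowable chain cannot touch the singular locus at all, i.e.\ $I^0C_i(X)=C_i(M)$. Thus for $j<m-k-1$ one gets $I^0H_j(X)=H_j(M)$ on the nose, and dualizing, $I^0H^j(X)=H^j(M)=H^j(X_{reg})$ — which is the identity you wanted, obtained \emph{directly}, without the detour through $H^j(X)$.

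That detour is also where your bookkeeping breaks down: your allowability threshold ($j<k+2$, modulo the sign error above) does not match the connectivity threshold ($j<m-k-1$), and you note this tension yourself but do not resolve it. The paper's argument avoids the issue entirely by never comparing $I^0H^j(X)$ to $H^j(X)$; the comparison is made directly with $H^j(M)$. Also note that the correct reading of the $0$-perversity is the opposite of your intuition: it is the \emph{top} perversity $\bar t$ for which allowability becomes vacuous in low codegree, not the zero perversity. Once the hypothesis of Corollary \ref{cor_1} is established via the correct allowability computation, the rest of your proposal (apply Corollary \ref{cor_1}, identify $X_{reg}$ with $M$ up to a set of dimension $\le k$ that is invisible to $L^p$ cohomology) is sound.
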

\begin{proof} We may assume $k<m-1$ since otherwise the statement is vacuous. Observe that $X:=cl(M)$ is then a compact subanalytic pseudomanifold.
Fix a stratification that makes of $X$ a stratified pseudomanifold. We can choose this stratification compatible with $\delta M$ and such that there is no stratum $S$ satisfying  $ k < \dim S <m$ (all the strata of positive codimension may be assumed to be included in $\delta M$). By definition of
$0$-allowable chains (see section \ref{sect_ih}), the
support of a singular chain $\sigma \in I^0 C_{j} (X)$  may not
intersect the strata of the singular locus of dimension less than
$(m-j)$.  If $j<m-k$ (or equivalently $k<m-j$) $|\sigma|$ thus  must lie entirely in
$M$, which entails $$I^0C_{j} (X)=C_{j}(M).$$

If $j<m-k-1$ then the same applies to $(j+1)$ and consequently $$I^0H_{j} (X)=H_{j}(M).$$
 The result is therefore again a consequence of  Theorems \ref{thm_de_Rham_lq},
\ref{thm_de_Rham_linfinity}, and Goresky and MacPherson's generalized
Poincar\'e duality (\ref{eq_gm_ot}).
\end{proof}
\begin{rem}
It could be  seen that  this duality is provided by the natural pairing given by integration. Considering $L^p$ cohomology with bounded support, it is possible to generalize this duality to the case of unbounded manifolds.
\end{rem}
\end{subsection}

\end{section}

\begin{section}{On $L^p$ weakly differentiable forms.}\label{sect_weakly_smooth}
As emphasized in the introduction, we shall need 
to work with nonsmooth forms, which we will differentiate as distributions. We will have to check that the  weakly smooth forms give rise to the same cohomology groups as the smooth ones (Corollary \ref{cor_ws_isom_smooth}), which requires to construct regularization operators (Theorem \ref{thm_regularization_operators}). This kind of techniques is actually very classical and we will follow de Rham's construction  \cite{derham}.  The $L^p$ estimates are however due to \cite{gold}. We first provide definitions.

Throughout this section,  $M$ stands for a $C^\infty$ submanifold of $\R^n$ and $m$ for its
dimension. This manifold is not assumed to be orientable. For each $k$, $\R^k$ will be assumed to be endowed with the orientation given by the canonical basis.

\begin{subsection}{Weakly differentiable forms.}\label{subsect_weakly_smooth}
 We say that a differential form $\omega$ on $M$ is {\it $L^1_{loc}$} if the restriction of $\omega$ to every compact subset of $M$ is $L^1$.
We denote by $\Lambda_{0} ^{j}(M)$ the set of
$C^\infty$ $j$-forms on $M$ with compact support.



\begin{dfn} Let $U$ be an open subset of $\R^m$.
 An $L^1_{loc}$  differential
$j$-form $\alpha$ on $U$ is called {\it weakly differentiable}   if
there exists an $L^1_{loc}$ $(j+1)$-form $\omega$ on $U$ such that for any form
$\varphi\in \Lambda_{0} ^{m-j-1} (U)$:$$\int_{U} \alpha \wedge
d\varphi =(-1)^{j+1}\int_{U} \omega \wedge \varphi.$$ The form
$\omega$ is then called {\it the weak exterior differential of
$\alpha$} and we write $\omega=\overline{d} \alpha$. A form $\omega$ on $M$ is {\it weakly differentiable} if it gives rise to such a form via the coordinate systems of $M$ (and the weak  exterior differential is then obtained by  pulling-back the corresponding weak  exterior differential).

We will denote by $ \overline{\Omega}^j(M)$ the space of weakly differentiable $j$-forms on $M$. We also set $\overline{\Omega}(M)=\underset{j\le m}{\oplus} \overline{\Omega}^j(M)$.

 For every $p\in [1,\infty)$, we denote by $\overline{\Omega}_{p} ^{j}(M)$  the set of
weakly differentiable $j$-forms which are  $L^p$ and
which have an  $L^p$ weak exterior differential. Together with
$\overline{d}$, these $\R$-vector spaces constitute  a cochain complex. We denote by
$\overline{H}_{p}^j (M)$  the resulting cohomology groups.\end{dfn}

\end{subsection}

\begin{subsection}{Regularizing operators} 
 We write $\supp\, \varphi$  for the support of a form $\varphi$ on $M$ and $\Lambda_{or} ^{j}(M)$ for the space of forms $\varphi\in \Lambda_{0} ^{j}(M)$ for which $\supp\, \varphi$ fits in an oriented open subset of $M$.
For simplicity,  we set for $\omega \in \overline{\Omega}^j(M)$ and $\varphi\in \Lambda_{or} ^{m-j} (M)$ $$<\omega,\varphi>:=\int_{M} \omega \wedge
\varphi. $$
 Moreover, given  a vector field $\xi$ on $M$, we denote by $\omega_\xi$ the differential $(j-1)$-form defined by $\omega_\xi(x)(\zeta):=\omega(x)(\xi(x) \otimes \zeta)$ for $x\in M$ and $\zeta \in \otimes^{j-1} T_xM$.

\begin{thm}\label{thm_regularization_operators}
There exist sequences of linear operators $R_i$ and $A_i$, $i \in \N$, on $\overline{\Omega}(M)$ satisfying the following properties:
 \begin{enumerate}[(1)]
  \item\label{item_hom} If $\omega \in \ws^j(M)$ then $R_i\omega$ and $A_i\omega$ respectively belong to $\Omega^{j}(M)$ and $\overline{\Omega}^{j-1}(M)$ and satisfy:
  \begin{equation}\label{eq_ra}R_i \omega-\omega=\db A_i \omega+A_i\db \omega.\end{equation}
%
  \item\label{item_bd_lp} If $\omega$ is $L^p$, $A_i\omega$ and $R_i\omega$ are $L^p$ as well, $1\le p\le \infty$. 
  \item \label{item_cv_lp}For each $L^p$ form $\omega$,   $1\le p<\infty$, $R_i \omega\to \omega$ and $A_i \omega\to 0$ for the $L^p$ norm, as $i \to \infty$.
  \item\label{item_support} If $W$ is a neighborhood of the support of $\omega \in \ws(M)$ then the supports of $R_i \omega$ and $A_i\omega$ are included in $W$ for all $i$ sufficiently large. 
 \end{enumerate}
\end{thm}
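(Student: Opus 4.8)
The plan is to follow de Rham's classical smoothing-by-regularizing-sections construction, as adapted to $L^p$ estimates by \cite{gold}, and to assemble the global operators $R_i, A_i$ from local ones via a partition of unity. First I would treat the model case $M=U$ an open subset of $\R^m$. Fix a nonnegative $C^\infty$ bump function $\chi$ supported in the unit ball with $\int \chi=1$, and for $t>0$ set $\chi_t(y)=t^{-m}\chi(y/t)$. Define the ``regularizing'' operator $R_{\chi}$ on an $L^1_{loc}$ form $\omega=\sum_I \omega_I\,dx_I$ by convolving each coefficient, $R_\chi\omega=\sum_I(\omega_I*\chi_t)\,dx_I$; this commutes with $\db$ because convolution commutes with weak differentiation. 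To get a homotopy I would use de Rham's trick of writing the smoothing as the time-$1$ map of a flow: introduce the family of dilations/translations $\phi_s$ and the associated homotopy operator obtained by integrating the Lie-derivative formula $\mathcal{L}_{\xi}=\db\iota_\xi+\iota_\xi\db$ against the mollifier, which produces $A_\chi$ with $R_\chi\omega-\omega=\db A_\chi\omega+A_\chi\db\omega$, proving (\ref{item_hom}) locally. Boundedness on $L^p$, $1\le p\le\infty$, in (\ref{item_bd_lp}) follows from Young's convolution inequality $\|\omega_I*\chi_t\|_p\le\|\chi_t\|_1\|\omega_I\|_p=\|\omega_I\|_p$ applied coefficientwise (and similarly for $A_\chi$, whose kernel is again an $L^1$ function of compact support), while (\ref{item_cv_lp}) for $1\le p<\infty$ is the standard fact that mollification converges in $L^p$ and that $A_\chi\omega\to 0$ as $t\to 0$ because its kernel has total mass $O(t)$.

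The passage from local to global is where the real work lies. Cover $M$ by countably many coordinate charts $(U_k,\psi_k)$, each with $\overline{U_k}$ compact in $M$ and $\psi_k(U_k)$ open in $\R^m$, chosen locally finite, together with a subordinate partition of unity $\{\theta_k\}$. The standard device (due to de Rham) is to regularize ``one chart at a time'': let $R^{(k)}$ denote the operator that mollifies in the $k$-th chart (conjugated by $\psi_k$, cut off so as to act as the identity outside $U_k$), with companion homotopy $A^{(k)}$, and set $R_i=R^{(k_i)}\circ\cdots\circ R^{(1)}$ for an enumeration of the charts with mollification parameters $t_{k}$ going to $0$ fast enough, and build $A_i$ from the $A^{(k)}$'s by the usual telescoping/composition formula that turns a product of homotopies into a single homotopy: if $R'\omega-\omega=\db A'\omega+A'\db\omega$ and $R''\omega-\omega=\db A''\omega+A''\db\omega$ then $R''R'\omega-\omega=\db(A''R'+A')\omega+(A''R'+A')\db\omega$. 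Pushing this through the (locally finite) infinite composition requires that on every compact set only finitely many factors are nontrivial, so that $R_i$ is genuinely a finite composition there; this is exactly where local finiteness of the cover and property (\ref{item_support}) get used, and conversely (\ref{item_support}) itself comes out of the construction since each $R^{(k)}$ moves supports by at most $t_k$ in the $k$-th chart. Properties (\ref{item_bd_lp}) and (\ref{item_cv_lp}) then follow for $R_i$ by composing the local estimates, being careful that the $L^p$ norms are taken with respect to $vol_M$ rather than Lebesgue measure on the charts — but since $\overline{U_k}$ is compact, the Jacobians of $\psi_k$ and of the metric are bounded above and below there, so the two norms are comparable on each chart and the comparison constants can be absorbed.

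The main obstacle I anticipate is bookkeeping the $L^p$ convergence in (\ref{item_cv_lp}) uniformly enough that the infinite composition still converges: each individual $R^{(k)}$ is only approximately the identity, with an error controlled by $t_k$ times (roughly) a derivative of $\omega$ in the $k$-th chart, yet $\omega$ is merely $L^p$, not differentiable, so one cannot bound $\|R^{(k)}\omega-\omega\|_p$ by $t_k\|\nabla\omega\|_p$. The resolution, again from de Rham/\cite{gold}, is to estimate $\|R^{(k)}\omega-\omega\|_p$ instead by the $L^p$-modulus of continuity of $\omega$, which tends to $0$ as $t_k\to 0$ but not at a controlled rate; so one chooses the parameters $t_k$ inductively, depending on the form(s) being regularized, so that $\sum_k\|R^{(k)}\cdots R^{(1)}\omega-R^{(k-1)}\cdots R^{(1)}\omega\|_p<\infty$ and the composition converges in $L^p$. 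Because the $t_k$ may then depend on $\omega$, one must check that a single sequence works simultaneously for $\omega$ and $\db\omega$ (finitely many forms), which is routine, and that the resulting $R_i,A_i$ are still \emph{linear} in $\omega$ — this holds because for fixed parameters each $R^{(k)}$, $A^{(k)}$ is linear, and linearity is preserved under composition and the telescoping formula. Once the model estimates and this convergence bookkeeping are in place, properties (\ref{item_hom})--(\ref{item_support}) assemble exactly as above, and Corollary \ref{cor_ws_isom_smooth} follows by a standard comparison-of-complexes argument using (\ref{item_hom}) and (\ref{item_cv_lp}).
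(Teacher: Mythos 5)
The overall plan is the same de Rham/Gol'dshtein scheme that the paper follows (local mollification, chain homotopy, infinite composition over a locally finite cover, telescoping formula for the $A$'s), but one step in your convergence bookkeeping is actually a genuine error, and a second step is glossed over in a way that hides the main technical device of the paper's proof.

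The gap is in how you handle (\ref{item_cv_lp}). You propose to choose the mollification parameters $t_k$ ``inductively, depending on the form(s) being regularized,'' via the $L^p$ modulus of continuity, so that the telescoping sum converges. But once the $t_k$ depend on $\omega$, the resulting operator $R_i$ is no longer a single linear operator on $\ws(M)$ — it is a different composition for each $\omega$. Your parenthetical justification (``for fixed parameters each $R^{(k)}$ is linear'') is circular: the parameters are precisely what you are allowing to vary with $\omega$. The paper avoids this entirely. It first establishes (see the estimates $|R_\ep\omega|_p \le C_\ep|\omega|_p$ and $|A_\ep\omega|_p\le\eta_\ep|\omega|_p$) that $C_\ep\to 1$ and $\eta_\ep\to 0$ as $\ep\to 0$, \emph{uniformly in $p\in[1,\infty]$ and independently of $\omega$}. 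This lets one pick the sequences $\tilde\ep_i=(\ep_{i,k})_k$ so that $\prod_k|R^k_{\ep_{i,k}}|_p\le 1+1/i$ and the corresponding $A$-sum is $\le 1/i$, with \emph{no} dependence on the form being regularized. The operators $R_i$, $A_i$ are then honest linear operators. Convergence $R_i\omega\to\omega$ in $L^p$, $p<\infty$, is then deduced by density: for a smooth compactly supported $\varphi$, local finiteness of the cover makes $R_i\varphi$ a \emph{finite} composition near $\supp\varphi$, so $R_i\varphi\to\varphi$; the uniform bound $|R_i|_p\le 1+1/i$ then extends this to all $L^p$ forms. Your modulus-of-continuity argument is not needed and would not produce linear operators.

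The second, more minor, issue: you write that the local operator $R^{(k)}$ is ``cut off so as to act as the identity outside $U_k$,'' but simply multiplying a mollifier by a cutoff destroys both the chain homotopy identity and the commutation with $\db$. The paper instead constructs (Lemma~\ref{lem_application_mu}) a diffeomorphism $h:U_{x_0}\to\R^m$ of a special form, whose inverse differential decays to zero near $\partial U_{x_0}$ so that the family $\sigma_y(x)=h^{-1}(h(x)+y)$ extends smoothly by the identity to all of $M$. The local operator is then $R_\ep\omega=\int f_\ep(y)\sigma_y^*\omega\,dy$, which is genuinely globally defined, smooth on $U_{x_0}$, equal to the identity outside, and satisfies the homotopy identity with $A_\ep$ built from $\tau_y(t,x)=\sigma_{ty}(x)$. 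If you want your plan to go through you must replace the ad hoc cutoff with such a construction; otherwise the local-to-global gluing breaks down at the very first step.
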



In order to outline the regularization process, let us recall the way it is usually achieved in $\R^n$. On Euclidean spaces, regularization is achieved by means of a convolution product. Namely,  take a $C^\infty$ function $f:\R^m \to \R$ satisfying $\supp f\subset B(\orm,1)$ as well as $\int_{\R^m} f =1$, and set $f_\ep(x):=\frac{1}{\ep^m}f(\frac{x}{\ep})$.  If $\omega\in \ws(\R^m)$, we then can define an operator by:
$$R_\ep \omega(x) = \omega * f_\ep(x) =\int_{y\in\R^m} f_\ep (y)\omega(x-y) =\int_{y\in \R^m} f_\ep (y)\cdot\lambda^*_y\omega(x)  ,  $$
where $\lambda:\R^m\times \R^m\to \R^m$ is defined by  $\lambda_y(x):=x-y$.

It is then well-known that $R_\ep\omega$ is $C^\infty$, and, in the case where $\omega$ is $L^p$, that $R_\ep \omega$ tends to $\omega$ for the $L^p$ norm as $\ep \to 0$. Hence, if we set $R_i:= R_{\ep_i}$, where $\ep_i$ is a sequence which is tending to zero, we have a sequence of regularizing operators.  

 Generalizing this kind of techniques to manifolds requires  an adequate function $\lambda_y(x)$. This will force us to work locally (using a coordinate system). The purpose of Lemma \ref{lem_application_mu} below is to provide  a function which has the required properties in a neighborhood $U_{x_0}$ of a given point $x_0$.  The desired regularizing operator $R_i$ and $A_i$ will then be obtained by gluing the respective ``local regularizing operators'' associated to a locally finite covering of $M$ by coordinate systems. 


\begin{lem}\label{lem_application_mu}
 Given $x_0\in M$, there exists a $C^\infty$ diffeomorphism $h:U_{x_0} \to \R^m$, where $U_{x_0}$ is an open neighborhood of $x_0$ in $M$,  such that the family of  mappings $\sigma_y: M \to M$, $y\in \R^m$, defined by   
 $$\sigma_y(x)=\left\{
    \begin{array}{l}
     h^{-1}(h(x)+y), \quad\mbox{if } x \in U_{x_0},\\
      x, \hskip 2.5cm\mbox{if } x \in M\setminus U_{x_0},
    \end{array}
    \right.$$  is a family of $C^\infty$ diffeomorphisms, smooth with respect to $y$.
  
\end{lem}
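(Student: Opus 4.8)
The plan is to obtain $h$ as the composition of an ordinary coordinate chart at $x_0$ with a \emph{radial blow-up} of a small ball onto $\R^m$ that is the identity near the center of the ball and that stretches so violently as one approaches the bounding sphere that, after conjugating a translation by it, the resulting map differs from the identity by a function which is \emph{flat} along that sphere. Gluing in the identity on the complement is then automatically $C^\infty$. This is essentially de Rham's device for regularizing on manifolds, and the only genuine work is a flatness estimate near the sphere.

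Concretely, I would first fix any $C^\infty$ diffeomorphism $\theta$ from an open neighborhood $W$ of $x_0$ in $M$ onto the ball $B(\orm,3)$, with $\theta(x_0)=\orm$ (shrinking $W$ as much as desired, which is harmless since the statement only asks for \emph{some} neighborhood), and set $U_{x_0}:=\theta^{-1}(B(\orm,2))$. Next I would choose a $C^\infty$ increasing bijection $\Lambda:[0,2)\to[0,\infty)$ with $\Lambda(r)=r$ for $r\le 1$ and $\Lambda(r)=\exp\!\big(\tfrac{1}{2-r}\big)$ for $r$ close to $2$, and define $\mu:B(\orm,2)\to\R^m$ by $\mu(\orm)=\orm$ and $\mu(z)=\Lambda(|z|)\tfrac{z}{|z|}$ for $z\ne \orm$; since $\Lambda$ is the identity near $0$ and a smooth increasing bijection onto $[0,\infty)$, the map $\mu$ is a $C^\infty$ diffeomorphism, equal to the identity near $\orm$. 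Then $h:=\mu\circ\theta:U_{x_0}\to\R^m$ is a $C^\infty$ diffeomorphism with $h(x_0)=\orm$, and on $U_{x_0}$ the formula $\sigma_y(x)=h^{-1}(h(x)+y)$ makes sense and, for every $y$, is a composition of diffeomorphisms of $U_{x_0}$ onto itself. Patched with the identity on $M\setminus U_{x_0}$, it is at least a bijection of $M$ that is the identity off $U_{x_0}$ and depends smoothly on $y$ on the open set $U_{x_0}\times\R^m$.

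What remains is to show that $\sigma$ is $C^\infty$ as a map $M\times\R^m\to M$ and that each $\sigma_y$ is a diffeomorphism, and here everything is at stake only along $\delta U_{x_0}$, since on $U_{x_0}$ and on $M\setminus cl(U_{x_0})$ the assertions are immediate and $D\sigma_y$ is invertible. Transporting the question through $\theta$, it is enough to prove that $F(z,y):=\mu^{-1}(\mu(z)+y)-z$, a priori defined for $z\in B(\orm,2)$ only, extends by $0$ to a $C^\infty$ function on $B(\orm,3)\times\R^m$; granting this, $\sigma_y$ differs from the identity by a function that is flat along $\delta U_{x_0}$, so it is $C^\infty$ with $D\sigma_y=\mathrm{Id}$ there, hence everywhere a local diffeomorphism and, being bijective, a diffeomorphism of $M$, and the joint smoothness in $(x,y)$ drops out of the same estimates. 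For the extension I would fix $y$ in a compact set and let $|z|\to 2^-$: writing $t:=2-|z|$, one has $\mu(z)=\exp(1/t)\,\tfrac{z}{|z|}$, whence $|\mu(z)+y|=\exp(1/t)+O(1)$ and $\tfrac{\mu(z)+y}{|\mu(z)+y|}=\tfrac{z}{|z|}+O(e^{-1/t})$, and since $\Lambda^{-1}(s)=2-1/\ln s$ for large $s$ the slowly-decaying parts cancel in the difference, leaving $F(z,y)=O(e^{-1/t})$; differentiating these asymptotics merely produces extra negative powers of $t$ in front of $e^{-1/t}$, so every partial derivative of $F$ in $z$ and in $y$ tends to $0$ as $|z|\to 2$, uniformly for $y$ in compacta. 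That is exactly the flatness required. The main obstacle is precisely this computation, and the reason I take the blow-up profile $\Lambda$ to grow exponentially (rather than merely like a power of $1/(2-r)$) is to make it succeed.
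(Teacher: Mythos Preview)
Your proposal is correct and follows essentially the same approach as the paper: compose a coordinate chart with a radial blow-up whose profile is $e^{1/(1-t)}$ near the boundary, and then verify that the conjugated translation differs from the identity by a function flat along the bounding sphere. The only cosmetic difference is that the paper packages the flatness estimate via the Taylor integral $\sigma_y(x)-x=\int_0^1 d_{h(x)+ty}h^{-1}\cdot y\,dt$ together with the observation that all derivatives of $h^{-1}$ vanish at infinity, which sidesteps your explicit asymptotic computation of $F(z,y)$.
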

\begin{proof}
Choose a  $C^\infty$  function $\phi:[0,1)\to \R$ which is equal to the identity on the interval $[0,\frac{1}{2}]$ and which coincides with the function $e^{\frac{1}{(1-t)}}$ near $1$.  We may take such a function that has positive first derivative everywhere.  Define then a $C^\infty$ diffeomorphism $\Phi:B(\orm,1)\to \R^m$ by setting $\Phi(x)=\phi(|x|)\frac{x}{|x|}$.

Take then a coordinate system $\Psi:U_{x_0}\to B(\orm,1)$ of $M$ near $x_0$.  We can assume that $U_{x_0}$ is relatively compact in $M$ and that $\Psi$ extends to a diffeomorphism on a neighborhood of $cl(U_{x_0})$ in $M$. Set  $h=\Phi \circ \Psi$ and define $\sigma_y$ as in the statement of the lemma. 

Since $\phi(t)$ and all its subsequent partial derivatives tend to infinity when $t$ tends to $1$, it is easy to see that the mapping $d_z h^{-1}=d_{\Phi^{-1}(z)}\Psi^{-1}\cdot d_z\Phi^{-1}$ and all its derivatives tend to zero when $z$ goes to infinity ($d\Psi^{-1}$ is bounded since $\Psi$ extends smoothly to a neighborhood of $cl(U_{x_0})$ and $U_{x_0}$ is relatively compact).
As, by Taylor's formula, we have 
$$\sigma_y(x)=  h^{-1}(h(x)+y)=x+\int_0 ^1 d_{h(x)+ty} h^{-1} \cdot y\,dt,$$
 we see that this entails that $\sigma_y$ is smooth at the boundary points of $U_{x_0}$. Smoothness with respect to $y$ follows from the same argument. The mapping $\sigma_y$ is a diffeomorphism since its inverse is given by $\sigma_{-y}$. 
\end{proof}
\begin{rem}\label{rem_u_j}
In the proof of the above lemma, the neighborhood $U_{x_0}$ is provided by a coordinate system. It thus can be chosen as small as we wish. 
\end{rem}

We are now ready to establish Theorem \ref{thm_regularization_operators}.



 \begin{proof}[Proof of Theorem \ref{thm_regularization_operators}]
The proof is divided in three steps. In step $1$  we define an operator $R_\ep$ regularizing the forms in a neighborhood of a given point $x_0$, as described in the paragraph preceding Lemma \ref{lem_application_mu}. In Step $2$, we define analogously an  operator $A_\ep$. In Step $3$, we glue the respective operators, constructed in the preceding steps for each element of a locally finite covering of $M$, into global operators $R_i$ and $A_i$.  

 Fix $x_0\in M$, denote by $\sigma_y: M\to M$, $y\in \R^m$,  the family of mappings provided by Lemma \ref{lem_application_mu}, and let  $U_{x_0}$ be the corresponding open neighborhood. We may assume that this neighborhood is relatively compact in $M$ (see Remark \ref{rem_u_j}). There is a positive constant $\lambda$ such that for all $\ep\in [0,1]$,  $y \in B(\orm,\ep)$,  and  $x$ in $M$, we have:  \begin{equation}\label{eq_sigma_id}|\sigma_y(x)-x| \le \lambda \ep.\end{equation}

 Take a $C^\infty$ function   $f:\R^n \to [0,1]$  satisfying $\supp f \subset B(\orm,1)$ and   $\int_{M} f=1$ as well as $f(y)=f(-y)$ for all $y$, and set $f_\ep(y):=\frac{1}{\ep^m}f(\frac{y}{\ep})$.

\noindent {\bf Step 1.} We define an operator $R_\ep$, $\ep>0$ (it is enough for our purpose to define $R_\ep$ for $\ep$ small), smoothing the forms on $U_{x_0}$.

Given  $\omega \in \ws(M)$, 
 we set for  all $x\in M$: 
\begin{equation}\label{eq_Rep_int}
 R_\ep \omega(x) = \int_{y\in\R^m} f_\ep (y)\cdot\sigma^{*}_y\omega(x) . 
\end{equation}


We now check that $R_\ep \omega$ is $C^\infty$ at every $x\in U_{x_0}$ for all $\omega \in \ws(M)$. The proof is actually analogous to the classical argument that shows that convolutions with a bump function are smooth. Namely,  for $x\in U_{x_0}$, by definition of the pull-back, (\ref{eq_Rep_int}) amounts to
$$ R_\ep \omega(x) = \int_{y\in \R^m} f_\ep (y)\cdot\omega(\sigma_y(x)) d_x\sigma_y, $$
so that if we make the substitution $z=y+h(x)$ (see Lemma \ref{lem_application_mu} for $h$) inside this integral, we get  
$$ R_\ep \omega(x) = \int_{z\in \R^m} f_\ep (z-h(x))\cdot\omega(h^{-1}(z)) d_x\sigma_{z-h(x)}, $$
which is clearly smooth with respect to $x$ (since so are $h, f_\ep$, and $\sigma$). 

 We now claim that $R_\ep$ is self-adjoint, that is to say, for every $\varphi \in \Lambda_{or} ^{m-j}(M)$ and every form $\omega \in  \overline{\Omega}^j(M)$, we have
\begin{equation}\label{eq_r_selfadjoint}
 <R_\ep \omega ,\varphi>=<\omega,R_\ep\varphi>.
   \end{equation}
Indeed, for such forms  $\varphi$ and $\omega$ we can write (since $\sigma_{-y}=(\sigma_y)^{-1}$),$$ <R_\ep \omega ,\varphi>= \int_{y\in \R^m}\int_{M}f_\ep(y)\cdot \sigma_y^*\omega\wedge \varphi= \int_{y\in \R^m}\int_{M} f_\ep(y)\cdot \omega \wedge \sigma_{-y}^*\varphi. $$
Making the substitution $z=-y$ in this integral then yields (\ref{eq_r_selfadjoint})  (since $f_\ep(-y)=f_\ep(y)$).

We  also need to establish an estimate of the $L^p$ norm of $R_\ep$.  Namely, we are going to show that  there exists a constant $C_{\ep}$   (depending on $\ep$ and not on $p\in [1,\infty]$) such that for every $L^p$ form $\omega \in \ws(U)$ we have:
 \begin{equation}\label{eq_estimate_l1_norm_Rep}
|R_\ep  \omega|_p \le C_{\ep}|\omega|_p .   
 \end{equation}
 Moreover,   we shall show that $C_{\ep}$   can be chosen in such a way that $C_{\ep}\to 1$  
  as $\ep \to 0$.

 To prove (\ref{eq_estimate_l1_norm_Rep}), fix $p\in [1,+\infty)$ (we postpone the case $p=\infty$). Observe that by definition of the pull-back of differential forms, we have  for $(x,y) \in M\times B(\orm,\ep)$:
 \begin{equation}\label{eq_rjjep_int}
|\sigma^{*}_y\omega(x)|=  |\omega(\sigma_y(x))d_x\sigma_y| \le c|\omega(\sigma_y(x))|, 
 \end{equation}
where $c=\sup \{ |d_x\sigma_y|: (x,y)\in M\times B(\orm,\ep) \}$. 
We deduce 
 \begin{equation}\label{eq_rjjep_int2}
|R_\ep \omega|_p \overset{(\ref{eq_rjjep_int})}{\le} c \big{|}\int_{y\in \R^m} f_\ep (y)\cdot|\omega\circ \sigma_y| \, \big{|}_p\le c \int_{y\in\R^m} f_\ep (y)\cdot|\omega\circ \sigma_y|_p ,
 \end{equation}
thanks to Minkowski's inequality. 

As $\sigma_y$ is a diffeomorphism, we can estimate $ |\omega\circ \sigma_y|_p$ by  making the substitution $u:=\sigma _y(x)$  as follows: 
$$ |\omega\circ \sigma_y|_p = \big{(}\int_{x\in M} |\omega\circ \sigma_y(x)|^p  \big{)}^{\frac{1}{p}}\le   \big{(}\int_{u\in M} |\omega(u)|^p\cdot \jac \sigma_y^{-1}(u) \big{)}^{\frac{1}{p}}\le c' |\omega|_p,$$
where $c'=\sup \{\jac \sigma_y^{-1}(u):(y,u)\in B(\orm,\ep) \times M\}$.
By (\ref{eq_rjjep_int2}), this entails:
 $$|R_\ep \omega|_p \le cc'|\omega|_p \int_{y\in\R^m}  f_\ep(y) = cc' |\omega|_p.$$
This yields (\ref{eq_estimate_l1_norm_Rep}) for all $p\in [1,\infty)$.  As $\sigma_y$ is smooth with respect to $y$,  it tends uniformly to the identity as $y$ goes to $0$. This implies that $c$ and $c'$ both go to $1$ as $\ep$ goes to zero.

We now address the case $p=\infty$. In this case,  we have:
$$|R_\ep \omega(x)|  \overset{(\ref{eq_rjjep_int})}{\le} c \int_{y\in \R^m} f_\ep (y) |\omega(\sigma_y(x))|  \le c|\omega|_\infty\int_{y\in \R^m} f_\ep (y) =c|\omega|_\infty,$$
which yields (\ref{eq_estimate_l1_norm_Rep}).
 
 We shall also need an estimate of the support of $R_\ep \omega$ that we give now.
 Note for this purpose that $\sigma_y(x)=x$ for all $x\in M\setminus \ux$ (for all $y$), so that   $R_\ep \omega (x)=\omega(x)$, for all $x $ in this set.
 Thus, thanks to (\ref{eq_sigma_id}), it is easily derived from the definition of  $R_\ep$ that: \begin{equation}\label{eq_support}
\supp\, R_\ep \omega\subset\{x\in M:d(x,\supp \,\omega)\le \lambda\ep\}. \end{equation}
 \medskip
 
\noindent {\bf Step 2.} We now turn to define our operator $A_\ep$.  Let us first define a mapping $\tau:\R^m\times [0,1]\times  M \to M$ by setting $\tau_y(t,x):=\sigma_{ty}(x)$ for $(y,t,x)\in\R^m\times [0,1]\times  M$.  Set then for $\omega\in\ws (M)$  
\begin{equation}\label{eq_Aep_int}
 A_\ep \omega(x) = \int_{y\in \R^m}\int_0^1 f_\ep (y)\cdot \tau^{*}_{y}\omega_{\pa_t}(x,t) dt , 
\end{equation}
where $\pa_t$ denotes the vector field $(1,0)$ on $ [0,1]\times  M$.

We now are going to establish some properties for $A_\ep$, analogous to those we proved for $R_\ep$. First, we notice that the argument we used to show (\ref{eq_estimate_l1_norm_Rep}) applies (replacing (\ref{eq_Rep_int})  with  (\ref{eq_Aep_int}) and $\sigma_y(x)$ with $\tau_y(x,t)$) to establish that there is a constant  $\eta_{\ep}$ such that  for every  $L^p$ form $\omega \in \ws(M)$ we have:
 \begin{equation}\label{eq_estimate_l1_norm_Aep}
|A_\ep  \omega|_p \le \eta_{\ep} |\omega|_p  .  
 \end{equation}
   Moreover, since the derivative of $\tau_y$ with respect to $t$ tends to zero as $y$ goes to zero, the obtained constant $\eta_\ep$ can be required to go to zero as $\ep$ goes to zero.


The form $A_\ep\omega$ can fail to be differentiable. To prove that it is  weakly differentiable, we first check that it is $L^1_{loc}$ for every $\omega\in\ws (M)$. As we  have  $A_\ep \omega (x)=\omega(x)$, for all $x \in M\setminus U_{x_0}$, the result  is clear locally at the points of  $M\setminus cl(U_{x_0})$. Hence, we just have to focus on the points of $cl(U_{x_0})$.  Take a compactly supported $C^\infty$ function $\phi$  which is equal to $1$ on $U_{x_0}$. Remark that $A_\ep(\phi \omega)(x) =A_\ep\omega (x)$, for all  $x\in K$. Consequently,  by (\ref{eq_estimate_l1_norm_Aep}),
 $$|A_\ep \omega_{|K}|_1\le |A_\ep(\phi \omega)|_1 \le  \eta_\ep |\phi \omega|_1 <\infty, $$
since $\omega$ is $L^1_{loc}$. This shows that $A_\ep \omega$ is $L^1_{loc}$.

The argument we used to establish (\ref{eq_r_selfadjoint}) can now be used  (replacing $\sigma_y$ with $\tau_y$ in its proof) in order to prove that for every compactly supported $\varphi \in \Lambda_{or} ^{m-j}(M)$ we have: \begin{equation}\label{eq_a_selfadjoint} <A_\ep \omega ,\varphi>=<\omega, A_\ep\varphi>.\end{equation}
 Note that it is clear from the definition of $A_\ep$ that $A_\ep\varphi$ is smooth  (since so are $\varphi$, $f_\ep$, and $\tau$) and that $A_\ep d\varphi=dA_\ep \varphi$. Thanks to (\ref{eq_a_selfadjoint}), we deduce that $A_\ep \omega\in \ws(M) $ for all $\omega\in\ws(M)$ and $\db A_\ep \omega=A_\ep \db\omega$.
 
Moreover, by  the respective definitions of $R_\ep$ and $A_\ep$, we immediately see that for every $C^\infty$ compactly supported form $\varphi$ we have
\begin{equation}\label{eq_ra_smooth}
 R_\ep \varphi - \varphi=dA_\ep \varphi+A_\ep d\varphi.
\end{equation}
By (\ref{eq_r_selfadjoint}) and (\ref{eq_a_selfadjoint}), we conclude that  for every   $\omega\in \ws(M)$ we have:
\begin{equation}\label{eq_ra_ws}
  R_\ep \omega -\omega=\db A_\ep \omega+A_\ep \db\omega.
\end{equation}

  \medskip

\noindent {\bf Step $3$.} We are ready to define the operators claimed in  the theorem.  Every point $x_0\in M$ has a neighborhood $U_{x_0}$ in which we can define the operators of steps $1$ and $2$.  
 As the family $U_{x_0}, \; x_0\in M$, covers $M$, there is a locally finite subfamily $(U_{x_k})_{k=1}^\infty$. 
 For each positive integer $k$ and every $\ep>0$,  we thus have some operators $A_\ep^k$ and $R_\ep^k$   such that $R_\ep ^k\omega$ is smooth on $U_{x_k}$
 for every   $\omega\in \ws(M)$. 

Take then a sequence $\tilde{\ep}=(\ep_k)_{k=1}^\infty$ of positive real numbers and set for $k \ge 1$
$$R^k_{\tilde{\ep}}=   R^k_{\ep_k}\cdots R^{2}_{\ep_{2}} R^1_{\ep_1}\quad \mbox{and} \quad A^k_{\tilde{\ep}}=A^k_{\ep_k}R^{k-1}_{\tilde{\ep}}, $$
with by convention $R^{0} _{\tilde{\ep}}=Id$. Set then
$$R_{\tilde{\ep}}=\lim_{k\to +\infty}R^k_{\tilde{\ep}} \quad \mbox{and} \quad A_{\tilde{\ep}}=\sum_{k=1} ^{+\infty} A_{\tilde{\ep}} ^k\,.$$
Inequalities (\ref{eq_estimate_l1_norm_Rep}) and (\ref{eq_estimate_l1_norm_Aep}) yield that for every positive integer $i$ there is a sequence $\tilde{\ep}_i=(\ep_{i,k})_{k=1}^\infty$ such that for all $p$
$$\prod_{k=1}^{+\infty}|R_{\ep_{i,k}}^k|_p\le 1+\frac{1}{i} \quad \mbox{and}\quad \sum_{k=1}^{+\infty} |R^{k-1}_{\ep_{i,k-1}}|_p \cdots |R^1_{\ep_{i,1}}|_p  |A_{\ep_{i,k}} ^k|_p\le \frac{1}{i}.$$

Let us check that the  operators $R_i:=R_{\ept_i} $ and $A_i:= A_{\tilde{\ep}_i}$ possess the required properties $(1-4)$.  Fix a positive integer $i$.  Note first that since, for each weakly differentiable form $\alpha$, $R_{\ep_{i,k}} ^k\alpha$ is smooth on $U_{x_k}$ as well as in the vicinity of each point of the complement of $U_{x_k}$ around which $\alpha$ is smooth, the operator $R_{\tilde{\ep}_i}$ clearly transforms weakly differentiable forms into smooth forms.
 By (\ref{eq_ra_ws}), we have for every $k$ 
 $$R_{\ept_i} ^k -R_{\ept_i} ^{k-1}=\db A_{\ept_i}^k+ A^k_{\ept_i}\db. $$
 Hence, adding all these formulas for $k=1,2\dots$, we get (\ref{eq_ra}), yielding (\ref{item_hom}).
 
 
 The choice we made on $\ept_i$ implies that for all $p$ \begin{equation}\label{eq_A_iR_i_i}|R_i|_p \le 1+\frac{1}{i} \quad \mbox{and}\quad |A_i|_p\le \frac{1}{i},\end{equation}
     establishing (\ref{item_bd_lp}). 
 
  By (\ref{eq_A_iR_i_i}), it is clear that  $|A_i|_p\to 0$ as $i \to \infty$. To complete the proof of (\ref{item_cv_lp}), we thus just have to check that $|R_i\omega-\omega|_p\to 0$ for any $L^p$ form $\omega$, for $p \in [1,\infty)$.  As for such $p$, the set of 
  $C^\infty$ compactly supported forms is dense in $(\ws^j_p(M), |. |_p)_{j\in \N}$,  and since $R_i$ is bounded for the $L^p$ norm,  it is enough to check that $R_i \varphi \to \varphi$ for the $L^p$ norm for every $C^\infty$ compactly supported form $\varphi$. 
  
  Fix such a form $\varphi$ and observe that, by (\ref{eq_support}), if all the $\tilde{\ep}_{i}$, $i\in \N$, are decreasing sufficiently fast, the support of all the forms $R^k_{\tilde{\ep}_i}\varphi$, $i\in  \N$, $k\in  \N$, will be comprised in some compact set $K$. Since $(U_{x_k})_{k=1}^\infty$ is locally finite, no $U_{x_k}$ can meet $K$ when $k$ is sufficiently large.  It means that $R_{\ept_i}^{k_0}\varphi=R_{\ept_i} \varphi$, for some $k_0$ large. But since
     $R^{k_0}_{\ep_{i}}\varphi$ tends to $\varphi$  uniformly (on compact sets) as $i\to \infty$, we see that $R_{\ept_i}\varphi$ tends to $\varphi$ uniformly,  yielding that $R_i\varphi$ tends to $\varphi$ for the $L^p$ norm.
     
     Thanks to (\ref{eq_support}), it is easily seen that we can require the support of $R_i \omega $ to be arbitrarily close to the support of $\omega$ by taking $i$ sufficiently large. A similar argument yields the analogous fact  for $A_i$, establishing (\ref{item_support}).  
 \end{proof}
 \begin{rem}\label{rem_regularization} The following observations will be useful.  Let $\omega \in \ws(M)$.
 \begin{enumerate}[(i)]
 \item\label{item_rd}  Applying (\ref{eq_ra}) to $\db\omega$ we get 
$$R_i\db \omega-\db\omega=\db A_i \db \omega.$$ 
Moreover, applying $\db$ to (\ref{eq_ra}), we see that $dR_i \omega-\db\omega= \db A_i\db \omega$. Together with the preceding equality, this entails:
\begin{equation}\label{eq_rd}R_i\db\omega=dR_i\omega.\end{equation}
   \item\label{item_rem_bd_lp} If $\db\omega$ is $L^p$, for some $p\in [1,\infty]$, then, by (\ref{eq_rd}) and (\ref{item_bd_lp}) of Theorem \ref{thm_regularization_operators}, $d R_i\omega=R_i\db \omega$ is $L^p$. Moreover, in the case where $\omega$ is $L^p$ as well, by (\ref{eq_ra}), we can then conclude that $\db A_i\omega$ is $L^p$.
\item If $\omega $ is $C^k$, for some $k\in \N$, then so is $A_i\omega$. This follows  from (\ref{eq_Aep_int}).
 \end{enumerate}
\end{rem}

As a byproduct of the existence of regularizing operators, we get the following result that will be needed to establish our de Rham theorems for $L^p$ cohomology:
\begin{cor}\label{cor_ws_isom_smooth}For all $p \in [1,\infty]$, the inclusions $\op^j(M)\hookrightarrow \wsp^j(M)$, $j\in \N$, induce isomorphisms in cohomology.
%
\end{cor}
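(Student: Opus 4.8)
The plan is to deduce Corollary \ref{cor_ws_isom_smooth} from Theorem \ref{thm_regularization_operators} by the standard argument that a cochain map admitting a chain homotopy to the identity induces an isomorphism in cohomology. Fix $p\in[1,\infty]$ and let $\iota:\op^\bullet(M)\hookrightarrow\wsp^\bullet(M)$ be the inclusion; note it is well defined because a smooth $L^p$ form with $L^p$ exterior differential is in particular weakly differentiable with $\db\omega=d\omega$. I must exhibit its inverse in cohomology. The natural candidate is a regularization operator $R_i$ from Theorem \ref{thm_regularization_operators}: by items (\ref{item_hom}) and (\ref{item_rem_bd_lp}) of Remark \ref{rem_regularization}, $R_i$ sends $\wsp^j(M)$ into $\op^j(M)$ (it produces smooth forms, it is $L^p$-bounded by (\ref{item_bd_lp}), and $dR_i\omega=R_i\db\omega$ is $L^p$ whenever $\db\omega$ is), and by (\ref{eq_rd}) it commutes with the differentials, so $R_i:\wsp^\bullet(M)\to\op^\bullet(M)$ is a cochain map.

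Next I would check the two composites are homotopic to the identity. The composite $R_i\circ\iota:\op^\bullet(M)\to\op^\bullet(M)$ differs from $\mathrm{Id}$ by $\db A_i+A_i\db=dA_i+A_i d$ by (\ref{eq_ra}); here one needs that $A_i$ preserves $\op^\bullet$, equivalently that $A_i\omega$ is smooth when $\omega$ is — but that is exactly item (iii) of Remark \ref{rem_regularization} applied with $k=\infty$, together with the $L^p$-boundedness (\ref{item_bd_lp}) and Remark \ref{rem_regularization}(\ref{item_rem_bd_lp}) which gives that $\db A_i\omega$ is $L^p$. Thus $A_i$ restricts to a chain homotopy on $\op^\bullet(M)$ and $R_i\circ\iota$ induces the identity in $H^\bullet_p$. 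The composite $\iota\circ R_i:\wsp^\bullet(M)\to\wsp^\bullet(M)$ likewise differs from $\mathrm{Id}$ by $\db A_i+A_i\db$, again by (\ref{eq_ra}), and now $A_i$ is already an operator on $\wsp^\bullet(M)$ by (\ref{item_bd_lp}) and Remark \ref{rem_regularization}(\ref{item_rem_bd_lp}); hence $\iota\circ R_i$ also induces the identity on $\overline H^\bullet_p(M)$. Therefore $\iota$ induces an isomorphism in cohomology in every degree, which is the assertion.

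The only genuinely delicate point is verifying the $L^p$-compatibility of the homotopy at each stage, i.e.\ that all the forms appearing ($R_i\omega$, $dR_i\omega$, $A_i\omega$, $\db A_i\omega$) stay $L^p$ so that the identities (\ref{eq_ra}) and (\ref{eq_rd}) are actually equalities between elements of the relevant $L^p$-complexes and not merely between $L^1_{loc}$ forms. This is handled entirely by invoking Theorem \ref{thm_regularization_operators}(\ref{item_bd_lp}) and parts (\ref{item_rd})--(\ref{item_rem_bd_lp}) of Remark \ref{rem_regularization}; no new estimate is needed. I would also remark that for $p<\infty$ one could alternatively phrase the argument using the convergence statement (\ref{item_cv_lp}), but the homotopy argument above is cleaner and covers $p=\infty$ uniformly, so I would present only that.
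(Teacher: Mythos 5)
Your proposal is correct and takes essentially the same route as the paper: the paper's proof also fixes $p$, invokes the regularization operators $R_i$ of Theorem \ref{thm_regularization_operators} (together with Remark \ref{rem_regularization}) to get a cochain map $R_i:\wsp^\bullet(M)\to\op^\bullet(M)$, and then uses the homotopy identity (\ref{eq_ra}) to conclude that $R_i$ induces the inverse of the map induced by the inclusion. You have merely spelled out in more detail the verifications (smoothness of $A_i\omega$, $L^p$-membership of $A_i\omega$ and $\db A_i\omega$) that the paper compresses into the single citation of Remark \ref{rem_regularization}.
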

 \begin{proof} Fix $p\in [1,\infty]$ and, for $j\in \N$, denote by $\Lambda^j:\Hp^j(M)\to \Hbp^j(M)$ the mapping induced by the inclusion between the two cochain complexes. Let $R_i:\ws(M)\to \Omega(M)$ be the regularizing operator provided by Theorem \ref{thm_regularization_operators}. By Remark \ref{rem_regularization}, $R_i$ induces for every $j$ a mapping $R^j_i:\Hbp^j(M) \to \Hp^j(M) $,  which, due to (\ref{eq_ra}),  is nothing but the inverse of $\Lambda^j$.
    \end{proof}
 \end{subsection}



\end{section}

\begin{section}{Lipschitz properties of subanalytic sets and mappings}\label{sect_lip}
\begin{subsection}{Lipschitz conic structure  of  subanalytic set-germs.}The study of the metric geometry of singularities is more challenging than  the study of their topology.    For instance it is well-known that
subanalytic sets can be triangulated and hence are locally homeomorphic to
cones.  The situation is more complicated if one is interested in the
description of the aspect of singularities from the metric point of
view.   We however are going to prove that this conic structure may be required to have some nice metric properties (Theorem \ref{thm_local_conic_structure}) that will make it possible to establish our de Rham theorems later on.

  \begin{dfn}\label{dfn_cell_decomposition}
A {\it cell decomposition of $\R^n$} is a finite partition of $\R^n$ into globally subanalytic sets
$(C_i)_{i \in I}$, called {\it cells}, satisfying certain properties explained below.

$n = 1:$ A {\it cell decomposition of $\R$} is given by a finite subdivision $a_1 < \dots < a_l$ of
$\R$. The cells of $\R$ are the singletons $\{a_i\}$, $1\le  i \leq l$, and the intervals $(a_i,
a_{i+1})$,
$0 \leq  i \leq l$, where $a_0 = -\infty$ and $a_{l+1} = +\infty$.

$n > 1:$ A {\it cell decomposition of $\R^n$} is the data of a cell decomposition of $\R^{n-1}$ and,
for each cell D of $\R^{n-1}$, some $C^\infty$ globally subanalytic  functions   on $D$ (which, by induction, is a $C^\infty$ manifold):
$$\zeta_{D,1} < ... < \zeta_{D,l(D)} : D \to \R.$$

The {\it cells of $\R^n$} are the {\it graphs}
$$\{(x, \zeta_{D,i}(x)) : x \in D\}  ,$$
and the {\it bands}
$$(\zeta_{D,i}, \zeta_{D,i+1}) := \{(x, y) : x \in D
\mbox{ and }\zeta_{D,i}(x) <y<\zeta_{D,i+1}(x)\},$$
for $0 \leq  i \leq l(D)$, where $\zeta_{D,0}(x) = -\infty$ and $\zeta_{D,l(D)+1}(x) = +\infty$.

A cell decomposition is said to be {\it compatible with finitely many sets $A_1,\dots,A_k$} if the
$A_i$'s are unions of cells.
\end{dfn}

It is well-known that given some globally subanalytic sets  $A_1,\dots,A_k$, it is always possible to find a cell decomposition compatible with this family of sets. This fact is true on every o-minimal structure. A detailed proof in this framework can be found in \cite{costebleu}.

Let $\pi:\R^n\to \R^{n-1}$ be the projection omitting the last coordinate.  If $D$ is a cell of $\R^n$, we call $E:=\pi(D)$, the {\it basis} of $D$.  Observe that if $\D$ is a cell decomposition of $\R^n$, then $\pi(\D):=\{ \pi(D):D\in \D\}$ is a cell decomposition of $\R^{n-1}$.


\begin{dfn}
Let $A, B \subset \R^n$. A globally subanalytic map $h:A \to B$  is {\it $x_1$-preserving} if it
preserves the first coordinate in the canonical basis of $\R^n$, i.e., if for any $x=(x_1,\dots,x_n) \in A$, $\mu(h(x))=x_1$,   where $\mu:\R^n \to \R$ is the orthogonal projection onto the first coordinate.
\end{dfn}

If $R$ is a positive real number and $n$ a positive integer,  we set   $$\ccn :=\{x=(x_1,\dots,x_n) \in [0,\infty)\times \R^{n-1}: |x|\leq Rx_1\}. $$ 
We also set $\mathcal{C}_1(R):=[0,+\infty)$. 

We shall need the following lemma which was proved in \cite{vlinfty} (Lemma 2.2.3) to compute $L^\infty$ cohomology. In this lemma all the germs are germs at the origin.

\begin{lem}\label{prop proj reg}
Let  $A_1,\dots,A_\mu$ be germs of subanalytic subsets of $\C_{n}(R)$,  $R>0$, and 
$\eta_1,\dots,\eta_l$  be   germs of nonnegative globally subanalytic functions on $\C_{n}(R)$. There exist a germ of subanalytic
$x_1$-preserving bi-Lipschitz
  homeomorphism (onto its image) $\Phi:(\C_n(R),0) \to (\C_n(R),0)$ and a cell decomposition $\mathcal{D}$ of $\R^n$ such
  that:
  \begin{enumerate}[(i)]
  \item\label{item_compatible}  $\D$ is compatible with (some representatives of the germs) $\Phi(A_1),\dots, \Phi(A_\mu)$.
 \item\label{item_graphe} Every cell of $\D$ 
   which is a graph (i.e., not a band, see Definition \ref{dfn_cell_decomposition}) is the graph of a  function that has bounded derivative. 
\item\label{item_eq} On each
 $D\in \mathcal{D}$, every germ  $\eta_i\circ \Phi^{-1}(x)$ is $\sim$
to the germ of a function of the form:\begin{equation}\label{eq
prep}|x_n-\theta(\xt)|^r a(\xt)\end{equation} (for $x=(\xt,x_n) \in \R^{n-1}
\times \R$) where
$a,\theta:E \to \R$ are globally subanalytic  functions on the basis $E$ of $D$ with $\theta$ Lipschitz and $r \in \Q$. 
\end{enumerate}
\end{lem}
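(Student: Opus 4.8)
The plan is to prove Lemma \ref{prop proj reg} by induction on $n$, following the strategy of cell decomposition and ``preparation'' theorems for globally subanalytic functions, combined with a careful bookkeeping of Lipschitz constants. The base case $n=1$ is essentially trivial: on $\mathcal{C}_1(R)=[0,\infty)$ one takes $\Phi=\mathrm{id}$, and after refining a cell decomposition of $\R$ compatible with the $A_i$'s, each $\eta_j$ restricted to a one-dimensional cell is, by the (one-variable) preparation theorem, equivalent to $|x_1-\theta|^r a$ with $\theta$ a constant (hence Lipschitz) and $a$ a nonvanishing function; condition (ii) is vacuous since there are no graphs over positive-dimensional bases to worry about in a meaningful way.

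For the inductive step, first I would apply the inductive hypothesis one dimension down, after projecting by $\pi:\R^n\to\R^{n-1}$. The key technical input is the \emph{preparation theorem} for globally subanalytic functions (in the style of Lion--Rolin / Parusi\'nski), which says that after a suitable cell decomposition $\D_0$ of $\R^n$, each $\eta_j$ takes on every band the form $|x_n-\theta(\xt)|^r\, a(\xt)\, U(\xt,x_n)$ with $U$ a unit (bounded away from $0$ and $\infty$ together with its relevant derivatives), $\theta$ and $a$ globally subanalytic on the base, and $r\in\Q$; on graph cells it reduces to a function of $\xt$ alone. The centers $\theta$ and the graph functions $\zeta_{D,i}$ obtained this way need not be Lipschitz, so the heart of the argument is to construct the $x_1$-preserving bi-Lipschitz homeomorphism $\Phi$ that, fiber by fiber over the base, straightens these functions so they become Lipschitz. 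Concretely, one builds $\Phi$ as a map that on each fiber $\pi^{-1}(\xt)$ rearranges the points $\zeta_{D,1}(\xt)<\dots<\zeta_{D,l}(\xt)$ and the centers $\theta_j(\xt)$ to prescribed Lipschitz positions; using \L ojasiewicz-type inequalities inside $\ccn$ one shows the distances between consecutive such values are $\sim$ to a Lipschitz function of $\xt$, so the fiberwise affine (or piecewise-affine) interpolation has controlled Lipschitz constant, and the inverse is likewise controlled.

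After applying $\Phi$, I would re-run a cell decomposition $\D$ of $\R^n$ compatible with $\Phi(A_1),\dots,\Phi(A_\mu)$ and refining the structure coming from $\D_0$; by construction the new centers $\theta\circ\Phi^{-1}$ and the new graph functions are Lipschitz (condition on $\theta$ in (iii) and the ``bounded derivative'' after possibly a further refinement making graphs $C^1$ with bounded derivative, condition (ii)). The unit $U$ can be absorbed into $a$ up to $\sim$, since $U\sim 1$, yielding (iii). Finally, $x_1$-preservation of $\Phi$ guarantees $\Phi$ maps $\ccn$ into $\ccn$ (possibly for a larger $R$, which one then renormalizes), and bi-Lipschitzness is inherited from the fiberwise construction plus the inductive bi-Lipschitz map on the base. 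The main obstacle, and the step I expect to require the most care, is the simultaneous control of \emph{all} the Lipschitz constants: one must choose $\Phi$ so that straightening the centers of one $\eta_j$ does not destroy the preparation of another, and so that the fiberwise rescaling has uniformly bounded distortion near the vertex $0$ of the cone; this is exactly where the conical shape $\ccn$ (so that $|x|\sim x_1$, preventing the fibers from becoming too ``long'' relative to their distance to $0$) and the \L ojasiewicz inequality (Proposition \ref{pro_lojasiewicz_inequality}) are used, and it is the reason the statement is phrased for germs inside $\ccn$ rather than arbitrary subanalytic germs.
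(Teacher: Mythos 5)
The paper does not supply a proof of this lemma: the sentence immediately preceding it records that it ``was proved in \cite{vlinfty} (Lemma 2.2.3),'' and the present article simply quotes it as a black box. So there is no in-paper argument to compare your attempt against. What I can do is assess your sketch on its own terms, against what one expects the proof in that reference to look like given the ingredients the paper itself flags (Kurdyka's $L$-regular cells in Remark \ref{rem_bded_der_lips}, the \L ojasiewicz inequality of Proposition \ref{pro_lojasiewicz_inequality}, the $x_1$-preserving device).

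Your overall strategy --- induction on $n$, Lion--Rolin-style preparation to put each $\eta_i$ in the form $|x_n-\theta(\xt)|^r a(\xt)\,U$ with $U$ a unit, a fiberwise $x_1$-preserving piecewise-affine bi-Lipschitz straightening of centers and graph walls, and \L ojasiewicz estimates inside $\C_n(R)$ to keep the distortion bounded near the vertex --- is the right one and is consistent with how the lemma is used later in the paper. Two places need repair. First, you say you would ``first apply the inductive hypothesis one dimension down, after projecting by $\pi$,'' and only then invoke preparation on $\R^n$; that is circular, because the functions you must hand the $(n-1)$-dimensional induction (the centers $\theta_j$, the coefficients $a_j$, the gaps $\zeta_{D,i+1}-\zeta_{D,i}$) are only produced by the preparation step. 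Preparation on $\R^n$ must come first, then the base induction on an enlarged family of auxiliary germs and functions on $\R^{n-1}$, then the fiberwise interpolation. Second, re-running a fresh cell decomposition $\D$ compatible with $\Phi(A_1),\dots,\Phi(A_\mu)$ after $\Phi$ is built can reintroduce non-Lipschitz graph walls and spoil condition (\ref{item_graphe}); the only safe refinements are by walls already among the straightened Lipschitz functions. You therefore need to put into the preparation/induction pipeline, from the outset, everything that $\D$ could require (the $A_j$'s and all auxiliary sets the recursion spawns), so that the final cell decomposition is built solely from straightened data. Neither observation means your idea is wrong; it is exactly the bookkeeping the lemma is really about, and your sketch does not yet close that loop.
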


\begin{rem}\label{rem_bded_der_lips}
 In (\ref{item_graphe}), it is required that the function defining the cells have bounded derivative. Such functions are not necessarily Lipschitz (with respect to the Euclidean distance). They are nevertheless Lipschitz with respect to the so-called inner metric (given by the shortest  path joining two points). It  follows from the existence of $L$-reqular cell decompositions \cite{k} that  there is a partition of the basis of each cell in such a way that the two metric be equivalent on each element of this partition. This means that in (\ref{item_graphe}), we could require the functions defining the cells to be Lipschitz (with respect to the Euclidean distance).
\end{rem}

 Given $A\subset \R^n$ and $x_0\in \R^n$, we denote by $x_0*A$  the cone over the space $A$ of vertex $x_0$, i.e., the set of points of type $tx_0+(1-t)y$ with $y\in A$ and  $t\in [0,1]$  (by convention, $x_0*\emptyset$ will be reduced to the point $x_0$). 

\begin{thm}\label{thm_local_conic_structure}
  Let  $X\subset \R^n$ be subanalytic and $x_0\in X $. 
For $\ep>0$ small enough, there exists a Lipschitz subanalytic homeomorphism
$$H: x_0* (S(x_0,\ep)\cap X)\to  \Bb(x_0,\ep) \cap X,$$  
  satisfying $H_{| S(x_0,\ep)\cap X}=Id$, preserving the distance to $x_0$, and having the following metric properties:
\begin{enumerate}[(i)] 
 \item\label{item_H_bi}     The natural retraction by deformation onto $x_0$ $$r:[0,1]\times  \Bb(x_0,\ep)\cap X \to \Bb(x_0,\ep)\cap X,$$ defined by $$r(s,x):=H(sH^{-1}(x)+(1-s)x_0),$$ is Lipschitz.   
 Indeed, there is a constant $C$ such that  for every fixed $s\in [0,1]$, the mapping $r_s$ defined by $x\mapsto r_s(x):=r(s,x)$, is $Cs$-Lipschitz.
 \item \label{item_r_bi}  For each $\delta>0$,
 the restriction of $H^{-1}$ to $\{x\in X:\delta \le |x-x_0|\le \ep\}$ is Lipschitz and, for each $s\in (0,1]$, the map  $r_s^{-1}:\Bb(x_0,s\ep) \cap X\to \Bb(x_0,\ep) \cap X$ is Lipschitz. 
\end{enumerate}
\end{thm}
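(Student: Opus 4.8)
The plan is to build $H$ inductively via Lemma \ref{prop proj reg}, the way $L^\infty$ cohomology was handled in \cite{vlinfty}, but now tracking Lipschitz estimates on both $H$ and its inverse. First I would reduce to the case $x_0 = 0$ and, after a linear change, assume $X$ is a germ contained in a cone $\mathcal{C}_n(R)$; more precisely, writing $X$ near $0$ as a finite union of germs each fitting (after a subanalytic $x_1$-preserving bi-Lipschitz chart) inside such a cone, it suffices to produce the homeomorphism on each piece compatibly. The natural ``radial'' structure is obtained by applying Lemma \ref{prop proj reg} to the family of sets describing $X$ together with the function $\eta = |x|$ (so that the distance to $0$ is controlled on each cell): we get a cell decomposition $\mathcal{D}$ of $\R^n$, compatible with the relevant sets, whose graph-cells are graphs of functions with bounded derivative (Remark \ref{rem_bded_der_lips} lets us take them Lipschitz) and on which $|x|$ has the normal-crossings form (\ref{eq prep}). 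The homeomorphism $H$ is then defined cell by cell: on each band one straightens the defining functions $\theta$ and rescales radially, and one checks the pieces glue along the common graph-cells.

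The core of the argument is the estimate in (\ref{item_H_bi}): that the deformation retraction $r_s$ is $Cs$-Lipschitz uniformly in $s$. I would prove this by induction on $n$, exactly as the conic retraction is built in \cite{vlinfty}, but carrying the stronger conclusion through the induction. The inductive step amounts to the following: a cell $D\subset\R^n$ is a band $(\zeta_{E,i},\zeta_{E,i+1})$ over a cell $E\subset\R^{n-1}$; by induction $E$ carries a retraction with the $Cs$-Lipschitz property, and one extends it to $D$ by interpolating in the last coordinate between the (Lipschitz, by Remark \ref{rem_bded_der_lips}) graphs bounding the band, using the form (\ref{eq prep}) of $|x|$ to see that this interpolation does not distort distances to $0$ by more than a bounded factor. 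The key computation is that differentiating $x\mapsto H(sH^{-1}(x)+(1-s)\cdot 0)$ picks up exactly one factor of $s$ from the scaling in the radial direction while all the transverse derivatives stay bounded — this is where the bounded-derivative clause (\ref{item_graphe}) and the exponent control in (\ref{item_eq}) of Lemma \ref{prop proj reg} are essential. One must also verify that $H$ itself (not just $r$) is Lipschitz and preserves $|x|$; preservation of the distance to $x_0$ is built into the construction since we only rescale radially and $|x|$ is monitored on each cell.

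For (\ref{item_r_bi}), the point is that on the region $\{\delta\le |x-x_0|\le\ep\}$, i.e. away from the vertex, the radial coordinate is bounded below, so the single factor of $s$ that degenerates at $0$ is now bounded below by $\delta/\ep$; hence $H^{-1}$ restricted there is Lipschitz with a constant depending on $\delta$, and likewise $r_s^{-1}$ on $\Bb(x_0,s\ep)\cap X$ is Lipschitz for each fixed $s>0$. Concretely I would invert the cell-by-cell formula for $H$ and observe that the only source of unboundedness in $dH^{-1}$ is a negative power of the radial variable coming from undoing the rescaling, which is harmless once that variable is bounded away from zero.

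The main obstacle I anticipate is the bookkeeping in the inductive gluing: one has to choose the cell decomposition and the straightening maps on bands so that the resulting $H$ is genuinely continuous across the graph-cells separating adjacent bands and so that the Lipschitz constants obtained at step $n-1$ propagate without deterioration to step $n$ (in particular the constant $C$ in $Cs$-Lipschitz must be taken uniform over all cells). Handling the cells that are themselves graphs — where there is ``no room'' in the last coordinate and the retraction is inherited directly from the basis — and making sure the form (\ref{eq prep}) is used with the correct $\theta$ (the same $\theta$ for the function $|x|$ and for the bounding graphs, which one arranges by feeding all of them into Lemma \ref{prop proj reg} simultaneously) is the delicate part. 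Everything else — that $H$ is a subanalytic homeomorphism, fixes the sphere, and preserves the radius — follows formally from the construction.
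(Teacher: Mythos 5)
Your high-level strategy (induction on dimension via Lemma~\ref{prop proj reg}, cell-by-cell construction, ``one factor of~$s$ from the radial direction'') does match the paper's, but there are two substantive gaps.

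\textbf{The induction hypothesis is too weak.} You propose to carry only the conclusion of the theorem — that $r_s$ is $Cs$-Lipschitz — through the induction on $n$. The paper's actual inductive statement $(\textrm{A}_n)$ includes two further clauses (items (4) and (5) in its formulation): multiplicative estimates $\xi_k\circ h(sx)\le C\,\xi_k\circ h(x)$ and its reverse $c_\delta\,\xi_k\circ h(x)\le \xi_k\circ h(sx)$ for a prescribed finite family of bounded functions $\xi_k$, and the paper explicitly remarks that these ``are required so as to perform the proofs of [(2)] and [(3)] during the induction step.'' Here is where your heuristic breaks down: on a band $(\eta_j,\eta_{j+1})$, the map $h$ interpolates linearly in the last coordinate, and to show $r_s$ is $Cs$-Lipschitz you must show that the band height $(\eta_{j+1}-\eta_j)$ contracts by a factor $\sim s$ under the lower-dimensional retraction $\tilde{r}_s$, i.e.\ $(\eta_{j+1}-\eta_j)(\tilde{r}_s(\tilde{x}))\lesssim s\,(\eta_{j+1}-\eta_j)(\tilde{x})$. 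Knowing only that $\tilde{r}_s$ is $Cs$-Lipschitz and that $\eta_{j+1}-\eta_j$ is Lipschitz and vanishes at the origin gives you $(\eta_{j+1}-\eta_j)(\tilde{r}_s(\tilde{x}))\lesssim s\,|\tilde{x}|$, which is useless because $|\tilde{x}|$ can be much larger than $(\eta_{j+1}-\eta_j)(\tilde{x})$ on a thin band. You genuinely need the pointwise multiplicative estimate for the bounded function $(\eta_{j+1}-\eta_j)(\tilde{x})/\tilde{x}_1$, and the only way to get it is to include that estimate in the induction hypothesis and verify it for the next generation of functions arising from the normal-crossings form~(\ref{eq prep}). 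Your proposal has no mechanism for this.

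\textbf{How the radius becomes a coordinate.} You propose to split $X$ into pieces each lying in a cone $\mathcal{C}_n(R)$ and then feed $\eta=|x|$ into Lemma~\ref{prop proj reg} as one of the functions to normalize, gluing the resulting maps. The paper avoids both the splitting and the gluing by a cleaner trick: it replaces $X$ by the graph $\check X=\{(t,x):t=|x|\}\subset\R^{n+1}$, which lies in a single cone $\mathcal{C}_{n+1}(R)$ for $R>1$, and on which the first coordinate \emph{is} the distance to the origin. The $x_1$-preserving clause of $(\textrm{A}_{n+1})$ then automatically yields ``preserves the distance to $x_0$'' once you project $\check X$ back down, and there is nothing to glue. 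Your version would have to reconcile several independently constructed homeomorphisms across the overlaps of the conical pieces while keeping all of them radius-preserving and $Cs$-Lipschitz with a uniform $C$ — a real bookkeeping difficulty that the paper's lift sidesteps entirely.

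Your treatment of (\ref{item_r_bi}) (radial coordinate bounded below kills the only degenerating factor) is in spirit the right idea, but in the paper it is again driven by the reverse multiplicative estimate (item (5) of $(\textrm{A}_n)$), so the fix for the first gap above is also what makes (\ref{item_r_bi}) go through.
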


\begin{proof}
For $\ep>0$, let us set $$\C_n(R,\ep):=\{x=(x_1,\dots,x_n) \in \C_n(R): 0 \le x_1 \le \ep\} .$$The idea is to replace the distance to $x_0$ with the function given by the projection onto the first coordinate. We will
 prove by induction on $n$ the following statements. 

\medskip

{\bf$(\textrm{A}_n)$} Let $R$ be a positive real number, $X_1,\dots,X_s$ finitely many subanalytic subsets
of $\C_n(R)$, and  $\xi_1,\dots,\xi_l$ some bounded subanalytic nonnegative functions on $\C_n(R)$.

For every positive small enough real number  $\ep$, there exists a
Lipschitz  $x_1$-preserving subanalytic  homeomorphism
$h:\C_n(R,\ep) \to \C_n(R,\ep) $   such that $h(\ep,x)=x$ for all $x\in B(0_{\R^{n-1}},R\ep)$, and satisfying
\begin{enumerate}[(1)]\item\label{item_triv}
$h(0* X_{j,\ep})=X_{j} \cap \{x\in \R^n :0< x_1\le \ep\} $, for all $j=1,\dots s$, where $ X_{j,\ep}=  X_j\cap \{x\in \R^n :x_1=\ep\}$.
 \item\label{item_h_bi_proof}     The natural retraction by deformation onto the origin $$r:[0,1]\times \C_n(R,\ep) \to \C_n(R,\ep)$$ defined by $$r(s,x):=h(sh^{-1}(x)),$$ is Lipschitz.
 Indeed, there is a constant $C$ such that  for every fixed $s\in [0,1]$, the retraction $r_s$, defined by $$x\mapsto r_s(x):=r(s,x),$$ is $Cs$-Lipschitz.
 \item\label{item_h_1_lips_proof}  For each $\eta>0$,
 the restriction of $h^{-1}$ to $\{x\in X:\eta \le |x|\le \ep\}$ is Lipschitz and, for each $s\in (0,1]$, the map  $$r_s^{-1}:\C_n(R,s\ep) \to \C_n(R,\ep) $$ is Lipschitz. 

\item \label{item_tilda_decreasing} There is a constant $C$ such
that  for all $x\in\C_n(R,\ep)$  and all $k \leq l$ we have:\begin{equation}\label{eq decroissance fn up to
contant}\xi_k\circ h(sx) \leq C\,\xi_k\circ h(x).\end{equation}

\item \label{item_tilda_decreasing_inv} For each $ \delta>0$ there is a positive constant $c_\delta$  such that we
have for all $x\in\C_n(R,\ep)\setminus \C_n(R,\delta)$ and all $k \leq l$:\begin{equation}\label{eq decroissance inverse}c_\delta\, \xi_k\circ h(x)\leq\xi_k\circ h(sx).\end{equation}
 \end{enumerate}

\medskip

Before proving these statements, let us make it clear that these
yield the theorem. Let $X\subset \R^n$ be a subanalytic set. We can assume that $0\in cl(X)$ and  work nearby the origin. 
 The set  $$\check{X}:=\{(t,x) \in \R \times X: t=|x|\},$$ is a subset of $\C_{n+1}(R)$ (for $R>1$) to
which we can apply {\bf$(\textrm{A}_{n+1})$}. This provides a
Lipschitz  $x_1$-preserving homeomorphism
$$h: \C_{n+1}(R,\ep)\to \C_{n+1}(R,\ep) ,$$ which, thanks to (\ref{item_triv}) of {\bf$(\textrm{A}_{n+1})$},  gives rise to a homeomorphism $$H: 0*(S(0,\ep)\cap X)\to X \cap B(0,\ep) $$ (since $\check{X}_\ep=S(0,\ep)\cap X$).  
 Because the projection defined by $P(t,x):=x$ induces a  bi-Lipschitz  homeomorphism between
$\check{X}$ and $X$, properties (\ref{item_H_bi}) and (\ref{item_r_bi}) of
the theorem  come down from 
 $(\ref{item_h_bi_proof})$ and $(\ref{item_h_1_lips_proof})$  of {\bf$(\textrm{A}_{n+1})$}.

The assertions (\ref{item_tilda_decreasing}) and (\ref{item_tilda_decreasing_inv}) are not necessary to
derive statement of the theorem. They are required so as to
perform the proofs of $(\ref{item_h_bi_proof})$ and $(\ref{item_h_1_lips_proof})$ during the
induction step of the proof of {\bf$(\textrm{A}_{n})$}.

\medskip

As {\bf$(\textrm{A}_1)$} is trivial 
($h$ being the identity map), we fix some $n>1$. We also fix some globally subanalytic  subsets $X_1,\dots,X_s$ of $\C_n(R)$, $R>0$,  as well as
some globally subanalytic  bounded functions $\xi_1,
\dots,\xi_l:\C_n(R)\to\R$.

 \medskip




The induction hypothesis will be applied to the elements of a suitable cell decomposition  of $\R^{n-1}$. This requires some preliminaries.

Apply Lemma \ref{prop proj reg} to the collection of globally subanalytic sets constituted by the  (germs of) the
$X_i$'s, the set $\C_n(R)$ itself, and the union of the zero loci of the $\xi_i$'s together with the finite family of functions $\xi_1,\dots,\xi_l$.  We get a (germ of) $x_1$-preserving globally subanalytic  bi-Lipschitz map  $\Phi:\C_n(R)\to \C_n(R)$ and a
cell decomposition $\D$ of $\R^n$ such that $(\ref{item_compatible})$, $(\ref{item_graphe})$, and $(\ref{item_eq})$ of the
latter lemma hold.

Let $\Theta$  be a  cell of $\D$ which lies in $\Phi(\C_n (R))$ and which is a graph, say of a
 function $\eta:\Theta' \to \R$, $\Theta'$ standing for the basis of $\Theta$.  By $(\ref{item_graphe})$ of Lemma
\ref{prop proj reg} (and Remark \ref{rem_bded_der_lips}), $\eta$ is  a  Lipschitz
function. It thus may be extended to a globally subanalytic
Lipschitz function on the whole of $\R^{n-1}$. Repeating this for all the
cells $\Theta$ of  $\D$  lying in $\Phi(\C_n (R))$ which are graphs (i.e., not bands), 
we get finitely many globally subanalytic  Lipschitz functions $\eta_1,\dots,\eta_v$. Using the $\min$ and $\max$ operators if necessary, we may transform this family into a family satisfying $\eta_1\leq \dots
\leq \eta_v$.

 As we may work up to a $x_1$-preserving
 bi-Lipschitz map, in what follows we will identify $\Phi$ with the identity map. Hence, thanks to $(\ref{item_eq})$ of Lemma
\ref{prop proj reg}, we
may assume that on each cell every function $\xi_i $
 is $\sim$ to a function of the form which appears in (\ref{eq prep}).

We are now going to introduce some  bounded $(n-1)$-variable
functions $\sigma_1,\dots,\sigma_p$, to which we will apply (\ref{item_tilda_decreasing}) and (\ref{item_tilda_decreasing_inv}) of the induction hypothesis. This will be of service to establish $(2)$ and $(3)$ and to show   that  the $\xi_k$'s satisfy (\ref{item_tilda_decreasing}) and (\ref{item_tilda_decreasing_inv}). These $(n-1)$-variable functions will be provided by the estimate (\ref{eq prep}) and the $\eta_j$'s.

Fix an integer $1 \leq j < v $ and a cell $\Lambda$ of $\D$. Set for simplicity $D:=\Lambda \cap  (\eta_j,\eta_{j+1})$. 

For each $k\le l$, the function $\xi_k $ 
 is $\sim$ to a function like in (\ref{eq prep}) on $D$,  i.e., there exist $(n-1)$-variable  functions on $\pi_{e_n}(D)$ ($\pi_{e_n}$ denoting the orthogonal projection along $e_n$, the last vector of the canonical basis), say $\theta_k$ and $a_k$, and  $\alpha_k\in \Q$ such that: $$\xi_k(\xt,x_n) \sim |x_n-\theta_k(\xt)|^{\alpha_k}
a_k(\xt),$$ for  $(\xt,x_n)\in D \subset \R^{n-1}\times \R$.

As the zero loci of the $\xi_k$'s are included in the graphs of the $\eta_i$'s, we
have on  $\pi_{e_n}(D)$ for every $k$, either
  $\theta_k \leq \eta_j$ or $\theta_k \geq \eta_{j+1}$. We will assume for simplicity that $\theta_k \leq \eta_j$.

This means that for $(\xt,x_n)\in D \subset \R^{n-1}\times \R$:
\begin{equation}\label{eq min}\xi_k(\xt,x_n) \sim \min ((x_n-\eta_j(\xt)) ^{\alpha_k} a_k(\xt)
,(\eta_j(\xt)-\theta_k(\xt)) ^{\alpha_k}  a_k(\xt)),\end{equation}  if
$\alpha_k$ is negative and
\begin{equation}\label{eq max}\xi_k(\xt,x_n) \sim \max ((x_n-\eta_j(\xt))  ^{\alpha_k} a_k(\xt)
,(\eta_j(\xt)-\theta_k(\xt)) ^{\alpha_k}  a_k(\xt)),\end{equation} in the
case where $\alpha_k$ is nonnegative.

 First, consider the following functions:
\begin{equation}\label{eqdefkappak}\kappa_k(\xt):=(\eta_j(\xt)-\theta_{k}(\xt))^{\alpha_k} a_k(\xt), \qquad k=1,\dots ,l.\end{equation}
For every $k$,  the function $\kappa_k$  is bounded on $D$ since it is
equivalent to the function $\xt \mapsto \lim_{x_n\to \eta_j(\xt)} \xi_k(\xt,x_n)$ which is bounded. 
As the function $(\eta_{j+1}-\eta_j)$ is Lipschitz and vanishes at the origin (since it extends a function whose  graph lies in $\C_{n}(R)$) the function $x=(x_1,\dots,x_{n-1})\mapsto \frac{(\eta_{j+1}-\eta_j)(x)}{x_1}$  is bounded on $\C_{n-1}(R)$.  We thus can complete the family $\kappa$ by adding the function $\frac{(\eta_{j+1}-\eta_j)(x)}{x_1}$  as well as the functions  $\min((\eta_{j+1}-\eta_j)^{\alpha_k}a_k,1)$, $1\le k\le l$. This family $\kappa$ of course depends on the fixed set $D$ (intersection  of some cell $\Lambda$ of $\D$ with  $(\eta_j,\eta_{j+1})$, for some $j$). The union of all these families obtained for every  such set $D$  eventually provides us the desired
 collection of  functions $\sigma_1,\dots,\sigma_p$.

We now turn to the construction of the desired homeomorphism.
 Refine the cell decomposition $\pi_{e_n}(\D)$  into a cell decomposition $\D'$ of $\R^{n-1}$ compatible with the  zero loci of the functions $(\eta_j-\eta_{j+1})$. Apply the induction hypothesis to the family constituted by the
cells of $\D'$. This
provides a $x_1$-preserving homeomorphism   $\hti:\C_{n-1}(R,\ep) \to
\C_{n-1}(R,\ep),$
satisfying $\hti(\ep,x)=x$, for all $x\in B(0_{\R^{n-2}},R\ep)$.
 In addition,  thanks to 
the induction hypothesis, we may assume that the functions
$\sigma_1,\dots,\sigma_p$ as well as all  the functions $\xi_i(x,\eta_j(x))$ satisfy (\ref{eq decroissance fn up to
contant}) and  (\ref{eq decroissance inverse}) (with respect to $\hti$).

\medskip

 For
$q \in (\eta_j,\eta_{j+1})$, decomposed as $(\tilde{q},q_n) \in \R^{n-1}\times \R$ we set  
\begin{equation}\label{eq_nu}\nu(q):= \frac{q_n-\eta_{j}(\tilde{q})}{\eta_{j+1}(\tilde{q})-\eta_{j}(\tilde{q})}.\end{equation}
In order to define the desired homeomorphism $h$, take now an element $x\in\C_{n}(R,\ep) $. If the point $q:=\frac{\ep}{x_1}x$ belongs to $(\eta_{j},\eta_{j+1})$, for some $j<v$, we set:
$$h(x)=h(\xt,x_n):=(\hti(\xt),\nu(q)(\eta_{j+1}-\eta_j)\circ \hti(\xt) +\eta_j\circ \hti(\xt)).$$ If the point $q$ belongs to the graph of $\eta_j$, for some $j\le v$,  we set  $$h(x):=(\hti(\xt),\eta_j(\hti(\xt)).$$
 Since $\D'$ is compatible with the sets $\{ \eta_j=\eta_{j+1}\}$ and since $(1)$ holds for $\tilde{h}$ for each cell of $\D'$, the mapping $h$ is a homeomorphism on 
$\C_n(R,\ep)$.
 Moreover, as by construction $h$ satisfies $(1)$ for the  $\Gamma_{\eta_j}$'s,  this property holds true for all the cells of $\D$. As $\D$ is compatible with the $X_j$'s, this yields $(1)$ for $h$.

Let us check that (\ref{item_h_bi_proof}) and (\ref{item_h_1_lips_proof}) hold for $\tilde{h}$. Fix for this purpose $j <v$ and set for simplicity $\overline{\eta}_j:= \eta_{j+1}-\eta_j$.  In virtue of  the induction hypothesis, inequality (\ref{eq
 decroissance fn up to contant}) is fulfilled by the functions
 $\frac{\etab_j(\xt)}{\xt_1}$, where  $\xt=(\xt_1,\dots,\xt_{n-1})\in \C_{n-1}(R,\ep)$. It means that  (applying (\ref{eq decroissance fn up to contant})  with $s=\frac{\xt_1}{\ep}$) there is a constant $C$ such that  for all such $\xt$: 
\begin{equation}\label{eq_eta_j_H_R}
 \etab_j\circ \hti(\xt) \le \frac{C \xt_1}{\ep} \cdot \etab_j\circ \hti(\frac{\ep}{\xt_1} \xt).
\end{equation}
  Therefore, as $\hti$ is Lipschitz, the mapping $h$, which maps linearly the vertical segment $[(\xt,\frac{\xt_1}{\ep} \cdot\eta_{j}\circ \hti(\frac{\ep}{x_1}\xt)),(\xt,\frac{\xt_1}{\ep} \cdot\eta_{j+1}\circ \hti(\frac{\ep}{x_1} \xt))]$  onto the segment $[(\hti(\xt),\eta_j\circ \hti(\xt)),(\hti(\xt),\eta_{j+1}\circ \hti(\xt))]$,  must be  Lipschitz as
  well. Moreover, for the same reason, (\ref{eq decroissance inverse}) entails  that  $h^{-1}$ is Lipschitz on $\C_n(R,\ep)\setminus \C_n(R,\delta)$ for every $\delta>0$.
  
 Furthermore, as  (\ref{eq decroissance fn up to contant}) holds for $\frac{\etab_j}{x_1}$,
  composing with $\hti^{-1}$ in this inequality, we get for $x\in \C_n(R,\ep)$ and $s\in [0,1]$:
 \begin{equation}\label{eq_eta_j_r_s}
 \etab_j(r_s(x)) \le Cs \cdot \etab_j(x),
\end{equation}
which yields that $r_s$ is $Cs$ Lipschitz for each $s$ (that $r$ is Lipschitz with respect to $s$ is clear since  $h$ is itself Lipschitz). Finally, using (\ref{eq decroissance inverse}) in exactly the same way, we can show that $r_s^{-1}$ is Lipschitz for every positive $s$. This yields (\ref{item_h_bi_proof}) and (\ref{item_h_1_lips_proof}).

\bigskip


It remains to establish (\ref{item_tilda_decreasing}) and (\ref{item_tilda_decreasing_inv}).  
The claimed estimates are clear on the graphs of the $\eta_{j}$'s
 for we have required the functions $\xi_k(x,\eta_j(x))$ to satisfy such inequalities when applying the induction hypothesis. Hence, let us fix  $ k \leq
l$, $j<v$, as well as  $D \in \D'$, and show that  
 (\ref{eq decroissance fn
up to contant}) and (\ref{eq decroissance inverse}) hold for $\xi_k$ on the set $(\eta_{j|D},\eta_{j+1|D})$.

   Observe  that, as  $\xi_k$ is  bounded,
it is enough to prove this for the function $\min(\xi_k,1)$, and,   by (\ref{eq min}) and (\ref{eq max}),
  it actually suffices to show that the functions $\min((x_n-\eta_j(\xt))^{\alpha_k} a_k(\xt),1)$ and  $\min(|\theta_k-\eta_j|(\xt) ^{\alpha_k} a_k(\xt),1)$ both admit such estimates.
 For the latter function, this follows from the
induction hypothesis since we have required the $\kappa_i$'s (see
(\ref{eqdefkappak}))  and $\hti$ to have this property. We thus only need
to deal with the function $\min((x_n-\eta_j(\xt))^{\alpha_k} a_k(\xt),1)$.

For simplicity, we set $$F(\xt,x_n):=(x_n-\eta_j(\xt))^{\alpha_k}\cdot a_k(\xt),$$
and $$G(\xt):=(\eta_{j+1}-\eta_j)(\xt)^{\alpha_k} \cdot a_k(\xt).$$

We have to  show the desired inequalities for $\min(F,1)$. By definition of $\nu$ (see (\ref{eq_nu})) we have for $x=(\xt,x_n)\in (\eta_{j|D},\eta_{j+1|D})$:
\begin{equation}\label{eq F G}F(x)=\nu(x)^{\alpha_k} \cdot G(\xt).\end{equation}

 Remark that the function $\nu(h(sx))$ is
constant with respect to $s\in [0,1]$, which
implies that for $x=(\xt,x_n)\in \C_n(R,\ep)$ and $s\in [0,1]$ we have:
\begin{equation}\label{eq2 F G}F(h(sx))=\nu(h( x))^{\alpha_k} \cdot G(\hti(s\xt)).\end{equation}

We assume first that $\alpha_k$ is negative. Thanks to the
induction hypothesis, we know that for $0< \delta \leq s\le 1$:  $$c_\delta  \min (G\circ \hti(x),1)\leq \min(G\circ \hti(s\xt),1)\leq C \min
(G\circ \hti(\xt),1),$$
for some positive constants $c_\delta,C$ (with $C$ independent of $\delta$). 
 Multiplying by $\nu^{\alpha_k}$ and applying
 (\ref{eq2 F G}), this implies that:
$$c_\delta\min(F\circ h(x),1,\nu(h(x))^{\alpha_k})\ \leq  \min(F\circ h(sx),1,\nu(h(x))^{\alpha_k})\leq C \min (F\circ h(x),1,\nu(h(x))^{\alpha_k}) $$
But, as $\alpha_k$ is negative,
$\min(F,\nu^{\alpha_k},1)=\min(F,1)$ and  we are done.

We now assume that $\alpha_k$ is nonnegative. This implies that $F $ is  bounded  (since, by (\ref{eq max}), $F\le \xi_k$), which entails that $G$ is bounded as well.  Moreover, thanks to (\ref{eq2 F G}), it actually suffices  to show the desired inequality for $G$ and $\hti$. As $G$ is bounded, $G\sim \min (G,1)$, which  satisfies (\ref{eq decroissance fn up to contant})  and (\ref{eq decroissance inverse}), in virtue of the induction hypothesis ($\min (G,1)$ is one of the $\sigma_i$'s). This
establishes (\ref{item_tilda_decreasing}) and (\ref{item_tilda_decreasing_inv}).
\end{proof}
\begin{rem}\label{rem_conical_same}
 We have proved that, given finitely many subanalytic set germs $X_1,\dots,X_s$ at $x_0\in\R^n$, the respective  homeomorphisms of the Lipschitz conic structure of the $X_i$'s can be required to be induced by the same Lipschitz homeomorphism $H:x_0*S(x_0,\ep)\to \overline{B}(x_0,\ep)$.
\end{rem}
\end{subsection}
 \begin{subsection}{horizontally $C^1$ mappings.}\label{sect_hor_C1} We wish to prove that globally subanalytic bi-Lipschitz homeomorphisms pull-back  weakly differentiable forms to weakly differentiable forms (see section \ref{sect_lip_lp}). This requires stratification theory and we shall make use of  the notion of horizontally $C^1$ mappings introduced in   \cite{mt}. In this section, we give basic definitions and prove a preliminary lemma. 

\begin{dfn}
A {\it stratified mapping} is the data of a mapping $h:X\to Y$, $X\subset \R^n$, $Y\subset \R^k$, together with some stratifications $\Sigma$ and $\Sigma'$ of $X$ and $Y$ respectively, such that $h$ smoothly maps every stratum of $\Sigma$ into a stratum of $\Sigma'$. 

A stratified mapping  $h:(X,\Sigma)
\to (Y,\Sigma')$  is said to be {\it horizontally
$C^1$}  \index{horizontally $C^1$} if, for any sequence
$(x_l)_{l\in \N}$ in a stratum $S$ of $\Sigma$ tending to some
point $x$ in a stratum $S'\in \Sigma$ and for any  sequence $u_l \in
T_{x_l} S$ tending to a vector $u$ in $T_x S'$, we have
$$\lim d_{x_l} h_{|S}(u_l)=d_x h_{|S'} (u).$$
\end{dfn}

If $h$ is horizontally $C^1$ then the derivative of $h_{|S}$ is bounded away from infinity on every bounded subset of $S$ for every $S \in \Sigma$.
  The lemma below can be considered as a converse of this observation. It relies on the existence of Whitney $(a)$ regular stratifications which is well-known for subanalytic mappings \cite{tesc, shiota}. We recall that a stratification is {\it Whitney $(a)$  regular} if for every sequence $x_i$ in a stratum $S$ converging to a point $y$ in a stratum  $S'$, in such a way that $T_{x_i} S$ has a limit in the Grassmannian, we have $\lim T_{x_i} S\supset T_y S'$.


\begin{lem}\label{lem_h_hor_C1}
Let $h:X\to Y$ be a subanalytic continuous mapping. If $|d_x h|$ (which exists almost everywhere) is bounded on every bounded subset of $X$ then there
 exist two stratifications $\Sigma$ and $\Sigma'$, of $X$ and $Y$ respectively, making of $h:(X,\Sigma)\to (Y,\Sigma')$  a horizontally $C^1$ stratified mapping.
\end{lem}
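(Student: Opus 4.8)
The plan is to obtain the stratifications $\Sigma$ and $\Sigma'$ by a single application of existing stratification theory for subanalytic maps, followed by a refinement argument that upgrades Whitney $(a)$ regularity into the horizontal $C^1$ property. First I would invoke the existence of Whitney $(a)$ regular stratifications compatible with a subanalytic map (see \cite{tesc,shiota}): there are stratifications $\Sigma$ of $X$ and $\Sigma'$ of $Y$ such that $h$ smoothly maps each stratum of $\Sigma$ into a stratum of $\Sigma'$ (so that $h:(X,\Sigma)\to(Y,\Sigma')$ is already a stratified mapping), and such that $\Sigma$ is Whitney $(a)$ regular. At this point one also knows, by subanalyticity, that after a further refinement each restriction $h_{|S}$ is $C^1$ with $d_x h_{|S}$ subanalytic in $x$; by the hypothesis that $|d_x h|$ is bounded on bounded subsets of $X$, and since $d_x h_{|S}$ is (a piece of) $d_x h$ wherever both make sense, the function $x\mapsto d_x h_{|S}$ is a bounded subanalytic map on each bounded portion of $S$.

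Next I would verify the horizontal $C^1$ condition along a single pair of adjacent strata $S'<S$, i.e.\ $S'\subset cl(S)\setminus S$. Take $x_l\in S$ with $x_l\to x\in S'$ and $u_l\in T_{x_l}S$ with $u_l\to u\in T_x S'$; by Whitney $(a)$ regularity, $u\in\lim T_{x_l}S$, so this is consistent. The quantity to control is $d_{x_l}h_{|S}(u_l)-d_x h_{|S'}(u)$. The key device is that the map $d h_{|S}$, being a bounded subanalytic map on a neighbourhood of $x$ in $S$, extends continuously to $cl(S)$ along subanalytic arcs after a suitable cell decomposition; more precisely I would pass to a cell decomposition (or a further refinement of the stratification, using that subanalytic maps are generically $C^1$ and that bounded subanalytic functions have limits along subanalytic arcs by the \L ojasiewicz-type curve selection) on which $x\mapsto d_x h_{|S}$, viewed as a map into the space of linear maps $\R^n\to\R^k$ (with domain artificially extended to all of $\R^n$ by composing with orthogonal projection onto $T_xS$), extends to a continuous subanalytic map $\widehat{D}$ on $cl(S)$. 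One then checks that the restriction of $\widehat D(x)$ to $T_x S'$ agrees with $d_x h_{|S'}$: this is because for a subanalytic arc $\gamma(t)\in S'$ and a lift to $S$, differentiating $h\circ\gamma$ two ways identifies the two derivatives on $T_x S'$ (this uses that $h$ is continuous and that $h_{|S'}$ is the genuine restriction of $h$). Granted this, $d_{x_l}h_{|S}(u_l)=\widehat D(x_l)(u_l)\to\widehat D(x)(u)=d_x h_{|S'}(u)$ by joint continuity of $(A,v)\mapsto A(v)$, which is exactly the required limit.

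The refinement in the previous paragraph has to be carried out simultaneously over all finitely many pairs of adjacent strata and in a way compatible with the already-fixed stratification $\Sigma'$ of $Y$ (so that $h$ remains stratified); this is routine since refining $\Sigma$ into finitely many subanalytic pieces and then re-stratifying $Y$ compatibly with the images preserves all the structure, and one can re-impose Whitney $(a)$ regularity at the end without destroying the continuity of the extended derivatives (continuity of a subanalytic map persists under refinement). I would also need to observe that the hypothesis ``$|d_x h|$ bounded on bounded subsets of $X$'' is used precisely to guarantee that the subanalytic map $x\mapsto d_x h_{|S}$ is bounded near the lower strata, which is what licenses the continuous-extension step; without it the extended ``derivative'' could blow up and no horizontal $C^1$ structure would exist.

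The main obstacle, in my view, is the continuous-extension step: showing that after a suitable subanalytic refinement the stratumwise derivatives $d_x h_{|S}$ genuinely extend continuously to the closure and that the extension restricted to $T_x S'$ equals $d_x h_{|S'}$. The first half is a combination of boundedness (the hypothesis), subanalyticity (curve selection plus generic $C^1$-ness), and the fact that a bounded subanalytic function with limits along all subanalytic arcs extends continuously after cell decomposition; the second half is the compatibility identity between the derivative of $h$ along $S$ and along $S'$, which must be extracted from the continuity of $h$ itself across the strata. Managing this cleanly, while keeping the bookkeeping of compatible refinements of both $\Sigma$ and $\Sigma'$ under control, is where the real work lies; everything after that is a direct continuity argument.
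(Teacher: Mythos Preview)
Your approach is different from the paper's, and the difference is instructive. You stratify $X$ directly, try to extend each $d h_{|S}$ continuously to $cl(S)$, and then argue a compatibility identity $\widehat D(x)_{|T_xS'}=d_xh_{|S'}$. The paper instead stratifies the \emph{graph} $\Gamma_h$: it takes a Whitney $(a)$ regular stratification $\Sigma_h$ of $\Gamma_h$ (together with one of $Y$) so that the projection $\pi_2$ to $Y$ is stratified, and defines $\Sigma$ on $X$ as the $\pi_1$-images of the strata of $\Sigma_h$. The derivative $d_{x}h_{|S}$ is then encoded as the tangent space of the corresponding graph stratum, and the horizontal $C^1$ condition becomes the statement that the limit tangent plane $\tau=\lim T_{(x_l,h(x_l))}Z$ has $\pi_{1|\tau}$ injective; this is forced by the bounded-derivative hypothesis (no vertical limit vectors in a graph with bounded slope), and Whitney $(a)$ on $\Gamma_h$ then gives the required convergence of derivatives for free. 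In short, the paper packages your ``continuous extension'' and ``compatibility'' steps into a single application of Whitney $(a)$ on the graph.

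The weak point in your outline is precisely the compatibility step. Your phrase ``a subanalytic arc $\gamma(t)\in S'$ and a lift to $S$'' does not make sense as written: an arc in $S'$ has no canonical lift to $S$. What one would actually need is an arc $\gamma$ in $S$ with $\gamma(0)=x\in S'$ and $\gamma'(0)=u\in T_xS'$, and then a separate arc $\tilde\gamma$ in $S'$ with the same initial data, together with a Lipschitz-type estimate $|h(\gamma(t))-h(\tilde\gamma(t))|=o(t)$ coming from the bounded-derivative hypothesis. This can be made to work, but it is exactly the content that the paper's graph argument delivers in one stroke: Whitney $(a)$ on $\Gamma_h$ says that the tangent space to the graph of $h_{|S'}$ sits inside the limit of tangent spaces to the graph of $h_{|S}$, which is your compatibility identity already proved. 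So your route is viable but longer, and the step you flagged as the main obstacle is the one the paper's device is designed to bypass.
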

\begin{proof}Let $\pi_1:\Gamma_h \to X$ (resp. $\pi_2:\Gamma_h\to Y$) be the projection onto
the source (resp. target) space of $h$.  Take Whitney $(a)$ regular stratifications $\Sigma_h$ and $\Sigma'$ of $\Gamma_h$ and $Y$ respectively such that  $\pi_2:(\Gamma_h,\Sigma_h) \to (Y, \Sigma')$ is a stratified mapping. 

Let $\Sigma$ be the stratification of $X$ constituted by the respective images of the strata of $\Sigma_h$
under the mapping $\pi_1$. 
 Observe that  $h:(X,\Sigma)\to (Y,\Sigma')$ is a stratified mapping. 


 In order to show that $h$ is horizontally $C^1$ (with respect to $\Sigma$), fix a stratum $S$ of $\Sigma$, a
sequence $x_l \in S$ tending to a point $x$ belonging to a stratum $S'\in \Sigma$, as well as
a sequence $u_l \in T_{x_l} S$ tending to some $u \in T_x S'$. 

Let $Z$ be the stratum of $\Sigma_h$ that projects onto $S$ via $\pi_1$ and   set  (extracting a subsequence if necessary, we may assume that this sequence is convergent) $$\tau:=\lim T_{(x_l,h(x_l))}  Z.$$  

\noindent {\bf Claim.} The restriction of $\pi_1$ to $\tau$ is one-to-one. 

To see  this, observe that, as $(\Gamma_h)_{reg}$ is dense in $\Gamma_h$, for every $l$ we can find an element $y_l\in (\Gamma_h)_{reg}$ close to $(x_l,h(x_l))$.  For every $l$, let $Z^{l}$ be the stratum of $\Sigma_h$ containing $y_l$  (choosing $y_l$ sufficiently generic, we may assume that $Z^l$ is open in $\Gamma_h$).  By the Whitney $(a)$ condition, the angle between $T_{(x_l,h(x_l))} Z$ and $T_{y_l} Z^{l}$ is small if $y_l$ is chosen sufficiently close from $(x_l,h(x_l))$. It means that we can assume that $T_{y_l} Z^{l}$ tends to a limit $\tau'$ which contains $\tau$. As $h$ has locally bounded derivative $\pi_{1|\tau'}$ must be one-to-one (the graph of a mapping having a bounded first order derivative may not have a
vertical limit tangent vector), yielding the claim.

For every $l$, there
is a unique vector $v_l \in T_{(x_l,h(x_l))} Z$ which projects
onto $u_l$. The above claim implies that the norm of $v_l$ is bounded (since otherwise we would have $\lim \pi_1(\frac{v_l}{|v_l|})=0$)
and we may assume that $v_l$ is converging to a vector $v$. The
vector $v$ then necessarily projects onto $u$.

 Let $Z'$  be the stratum of $\Sigma_h$
that projects onto $S'$ and let $w$ be the vector tangent to $Z'$ at $(x,h(x))$ which projects onto $u$. By the Whitney $(a)$ condition $(v-w)\in\tau$.  Therefore, since  $(w-v)$
lies in the kernel of $\pi_{1|\tau}$, it must be zero (by the above claim). Hence, $v$ is tangent to  $Z'$, which entails that:
$$\lim d_{x_l} h_{|S} (u_l)=\lim \pi_2(v_l)=\pi_2(v)= d_{x} h_{|S'}(u).$$
\end{proof}

\begin{rem}\label{rem_hor_C1} In the situation of the above lemma, take in addition finitely many subanalytic subsets $A_1,\dots,A_k$ of $X$. 
 Since the stratification of $X$  given by the above lemma is provided by a Whitney stratification of the graph of $h$, we see that we can require the $A_i$'s to be unions of strata of  this stratification.
\end{rem}
\end{subsection}
\begin{subsection}{Stratified forms.}\label{sect_strat}    We recall the definition of stratified forms and then prove that these forms naturally give rise to weakly differentiable forms. 

\begin{dfn}\label{dfn_stratified_form}
Let $X\subset \R^n$ be subanalytic and let $\Sigma$ be a stratification of $X$. 

A {\it stratified differential $0$-form on $(X,\Sigma)$} is a collection of functions $\omega_S:S\to \R$, $S \in \Sigma$, that glue together into a continuous function on $X$.  

A {\it stratified differential $j$-form on $(X,\Sigma)$}, $j>0$, is a collection $(\omega_S)_{S \in \Sigma}$  where, for every $S$, $\omega_S$ is a continuous differential $j$-form on $S$ such that for any sequence $(x_i ,\xi_i)$, $i \in \N$, with $x_i$ tending to some $x\in S'\in \Sigma$ and $\xi_i$ tending to some $\xi \in \otimes^j  T_xS'$ we have $$\lim \omega_S(x_i,\xi_i)=\omega_{S'}(x,\xi).$$ 

 We say that $\omega=(\omega_S)_{S \in \Sigma}$ is {\it differentiable}  if $\omega_S$ is $C^1$ for every $S\in \Sigma$ and if $d\omega:=(d\omega_S)_{S\in \Sigma}$ is a stratified form.
\end{dfn}
 The integral of a stratified form on $X$ is then defined as the sum of the respective integrals of the corresponding forms on the top dimensional strata (see \cite{stokes} for details). An interesting feature of stratified forms is to admit a Stokes' formula, which we recall now.

\begin{pro}\label{cor_stokes_C0_manifolds} \cite{stokes}
Let $M\subset \R^n$ be a subanalytic $C^0$ compact oriented $m$-dimensional manifold with boundary $\pa M$ and let $\Sigma$ be a stratification of $M$.   For any differentiable stratified $m$-form  $\omega:=(\omega_S )_{S\in \Sigma}$ on $M$ we have:
$$\int_{M} d\omega=\int_{\pa M} \omega.$$
\end{pro}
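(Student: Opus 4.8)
The idea is to exhaust $M$ from the inside by manifolds with boundary obtained by deleting thin neighbourhoods of the strata of dimension $<m$, to apply the classical Stokes formula there, and to let the neighbourhoods shrink. Put $Z:=M\setminus\bigcup\{S\in\Sigma:\dim S=m\}$, a closed subanalytic subset of $M$ which contains $\partial M$, and let $M_{reg}:=M\setminus Z$; this is a $C^\infty$ open $m$-submanifold of $\R^n$ on which $\omega$ restricts to a $C^1$ form. Since $\omega$ and $d\omega$ are stratified forms on the compact set $M$, the gluing condition of Definition \ref{dfn_stratified_form} forces $|\omega|$ and $|d\omega|$ to be bounded on $M_{reg}$; as $vol_M\big(M_{reg}\cap\{d(\cdot,Z)<\varepsilon\}\big)\to 0$ when $\varepsilon\to 0$, we get $\int_M d\omega=\lim_{\varepsilon\to 0}\int_{M_\varepsilon}d\omega$ with $M_\varepsilon:=\{x\in M:d(x,Z)\ge\varepsilon\}$. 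The function $d(\cdot,Z)$ being subanalytic, there is, by the Sard theorem for subanalytic maps (or by using a subanalytic tube function of $Z$ in $M$), a sequence $\varepsilon_i\downarrow 0$ for which $M_{\varepsilon_i}$ is a compact manifold whose boundary $\partial M_{\varepsilon_i}$ is contained, together with a neighbourhood on which $\omega$ is $C^1$, in $M_{reg}$. The classical Stokes formula then gives $\int_{M_{\varepsilon_i}}d\omega=\int_{\partial M_{\varepsilon_i}}\omega$, so the proposition reduces to the convergence
\begin{equation}\label{eq_stokes_lim}\int_{\partial M_{\varepsilon_i}}\omega\longrightarrow\int_{\partial M}\omega\quad\text{as }i\to\infty.\end{equation}

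For \eqref{eq_stokes_lim}, choose a finite subanalytic open cover of $M$ and a subordinate $C^\infty$ partition of unity (restricted from $\R^n$): multiplication by such a function preserves differentiable stratified forms and both members of \eqref{eq_stokes_lim} are additive in $\omega$, so we may assume $\supp\,\omega\subset U_{x_0}$ for some $x_0\in M$. If $x_0\in M_{reg}$ then $U_{x_0}\subset\mathring{M}_{\varepsilon_i}$ for $i$ large and both sides of \eqref{eq_stokes_lim} vanish. Assume then $x_0\in Z$ and apply Theorem \ref{thm_local_conic_structure} to $M$ --- and, when $x_0\in\partial M$, simultaneously to $\partial M$, using Remark \ref{rem_conical_same} --- to identify $\overline{B}(x_0,\varepsilon)\cap M$ with the cone $x_0*L$ on the link $L:=S(x_0,\varepsilon)\cap M$, a compact subanalytic $C^0$ manifold of dimension $m-1$, with boundary $S(x_0,\varepsilon)\cap\partial M$ when $x_0\in\partial M$. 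In this model $\partial M_{\varepsilon_i}$ decomposes, near $x_0$, into a \emph{main sheet} parallel to the $(m-1)$-dimensional strata of $\partial M$ (whose integral converges to $\int_{\partial M\cap U_{x_0}}\omega$, as $\omega$ is a \emph{continuous} stratified form), \emph{twin sheets} running along each $(m-1)$-dimensional stratum interior to $M$ (which carry opposite boundary orientations and hence cancel in the limit, again by the continuity of $\omega$ across such a stratum), and \emph{link-type pieces} over the strata of dimension $\le m-2$, whose $(m-1)$-volume is $O(s^{m-1})$ at scale $|x-x_0|\sim s$; at this last point the bound ``$r_s$ is $Cs$-Lipschitz'' of Theorem \ref{thm_local_conic_structure} is used, and since $|\omega|$ is bounded the contribution of these pieces tends to $0$. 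One organises these estimates by induction on $m$ (the case $m\le 1$ being immediate), the inductive hypothesis being applied to the links $L$, which are subanalytic $C^0$ manifolds of dimension $m-1$, possibly with boundary. This proves \eqref{eq_stokes_lim}, hence the proposition.

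The heart of the matter is \eqref{eq_stokes_lim}: the homeomorphism of the conic structure given by Theorem \ref{thm_local_conic_structure} is only Lipschitz and subanalytic, not smooth, so one cannot simply pull $\omega$ back and differentiate under the integral sign as in the classical proof. What makes the passage to the limit legitimate is precisely the content of the notion of a \emph{differentiable stratified} form --- $\omega$ and $d\omega$ extend continuously across the lower strata in the sense of Definition \ref{dfn_stratified_form} --- combined with the uniform control, furnished by Theorem \ref{thm_local_conic_structure}, of the Lipschitz constants of the retractions $r_s$ and of the volumes of the links. This is why it is the subanalyticity of the conic structure, and not merely its existence, that is indispensable here.
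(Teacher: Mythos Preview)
The paper does not supply a proof of this proposition; it is quoted verbatim from \cite{stokes} with no argument given here. There is therefore nothing in the present paper to compare your sketch against directly.

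Your overall strategy --- exhaust $M$ by the compact smooth manifolds-with-boundary $M_{\varepsilon_i}=\{d(\cdot,Z)\ge\varepsilon_i\}$, apply the classical Stokes formula on each, and pass to the limit --- is the standard one and is sound. Two points deserve comment.

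First, your appeal to Theorem~\ref{thm_local_conic_structure} is both heavier than necessary and slightly misaligned with what you actually use. The hypersurfaces $\partial M_{\varepsilon_i}$ are level sets of the \emph{Euclidean} distance to $Z$, whereas the conic homeomorphism $H$ of Theorem~\ref{thm_local_conic_structure} does not preserve that function; so your ``decomposition into main sheet, twin sheets and link-type pieces in the cone model'' is not literally a description of $\partial M_{\varepsilon_i}$. What the limit \eqref{eq_stokes_lim} really needs is more elementary: (a) near an interior $(m-1)$-stratum, the two sheets of $\partial M_{\varepsilon_i}$ converge to that stratum with opposite induced orientations, and the stratified continuity of $\omega$ forces the two contributions to cancel; (b) the remaining portion of $\partial M_{\varepsilon_i}$, a tube boundary over the strata of dimension $\le m-2$, has $(m-1)$-volume $O(\varepsilon_i)$ by the usual subanalytic tube estimate $\mathcal H^{m-1}\big(\{d(\cdot,Z')=\varepsilon\}\big)=O(\varepsilon^{\,m-1-\dim Z'})$. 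Neither (a) nor (b) requires the Lipschitz conic structure, and once they are stated this way the ``induction on $m$ via the links'' becomes unnecessary.

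Second, your claim that the stratified-form condition forces $|\omega|$ and $|d\omega|$ to be bounded on $M_{reg}$ is correct and is where Definition~\ref{dfn_stratified_form} does real work, but it is not completely obvious: the gluing hypothesis constrains $\omega_S(x_i,\xi_i)$ only for $\xi_i$ tending to a vector tangent to the limiting stratum. One should note that if $|\omega_S(x_i)|\to\infty$ one can choose $\xi_i\to 0$ with $\omega_S(x_i,\xi_i)\not\to 0$, violating the condition at any lower stratum; a sentence to this effect would close the gap.
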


Using this result, we can show that stratified forms naturally give rise to weakly differentiable forms:

\begin{lem}\label{lem_stratified_are_weakly_diff}
  Let $\alpha:=(\alpha_S)_{S\in \Sigma}$ be a stratified $j$-form, where $\Sigma$ is a stratification of a smooth submanifold $M$ of $\R^n$. If $\alpha$ is differentiable then it is weakly differentiable in the sense that the form $\alpha'$ defined by  $\alpha'(x):=\alpha_S(x)$, for $S\in \Sigma$, $\dim S=\dim M$, and   $x \in S$ (this form is defined almost everywhere) is weakly differentiable.
\end{lem}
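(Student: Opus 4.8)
The plan is to show that for any test form $\varphi \in \Lambda_0^{m-j-1}(M)$ we have
$$\int_M \alpha' \wedge d\varphi = (-1)^{j+1}\int_M (d\alpha)' \wedge \varphi,$$
where $(d\alpha)'$ is the top-dimensional-stratum representative of the differentiable stratified form $d\alpha = (d\alpha_S)_{S\in\Sigma}$. Once this identity is established, the definition of weak differentiability is satisfied, with $\overline{d}\alpha' = (d\alpha)'$, and we are done. Note that $\alpha'$ and $(d\alpha)'$ are $L^1_{loc}$ since stratified forms are continuous on each stratum, hence bounded on compact subsets of the (top-dimensional, open, dense) union of maximal strata, and the lower-dimensional strata have measure zero.

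The key step is to identify the left-hand side as the integral of the exterior derivative of a stratified $(m-1)$-form and then apply the stratified Stokes' formula (Proposition \ref{cor_stokes_C0_manifolds}). First I would form the stratified $(m-1)$-form $\beta := (\alpha_S \wedge (d\varphi)_S)$... more precisely, consider $\eta := \alpha \wedge \varphi$, which is a stratified $(m-1)$-form on $(M,\Sigma)$: on each stratum $S$ of dimension $m$ it is $\alpha_S \wedge \varphi_{|S}$, and the compatibility condition at lower strata follows from that of $\alpha$ together with the smoothness of $\varphi$ on all of $M$. Since $\alpha$ is differentiable and $\varphi$ is smooth, $\eta$ is a differentiable stratified form with $d\eta = d\alpha \wedge \varphi + (-1)^j \alpha \wedge d\varphi$ on each top stratum (the Leibniz rule holds stratum by stratum). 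Because $\varphi$ has compact support, I would choose a large closed ball $\overline{B}$ containing $\supp\varphi$ in its interior and intersect; on $M \cap \overline{B}$ — after a routine approximation/exhaustion argument to handle the fact that $M$ need not be compact and $M\cap \overline B$ is a manifold with "corners" only in a measure-zero set — Proposition \ref{cor_stokes_C0_manifolds} gives $\int d\eta = \int_{\partial} \eta = 0$, the boundary term vanishing because $\varphi$ is supported away from $\partial(M\cap\overline B)$. Rearranging $\int_M d\alpha \wedge \varphi + (-1)^j \int_M \alpha \wedge d\varphi = 0$ yields exactly the desired weak-differentiation identity.

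The main obstacle I anticipate is applying the Stokes' formula of Proposition \ref{cor_stokes_C0_manifolds} when $M$ is only a smooth (not compact, not necessarily closed in $\R^n$) submanifold: that proposition is stated for a subanalytic $C^0$ \emph{compact} oriented manifold with boundary. To bridge this I would restrict to the support of $\varphi$, which is compact; work inside a subanalytic compact neighborhood $K$ of $\supp\varphi$ in $cl(M)$ chosen so that $K$ is a compact subanalytic set with a locally topologically trivial stratification refining $\Sigma$, and so that $M\cap \mathrm{int}\, K$ is a manifold containing $\supp\varphi$. One can take $K = cl(M)\cap \overline{B}(x_0,\rho)$ for suitable $x_0,\rho$, or more carefully build $K$ using a triangulation; then either $K$ itself serves as the compact oriented stratified manifold with boundary (if $\partial K$ avoids $\supp\varphi$) or one appeals to an exhaustion of $M$ by compact subanalytic submanifolds-with-boundary. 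The orientability needed for Stokes is local and causes no trouble since the identity to be proved is local (one may work in coordinate charts and use a partition of unity subordinate to an oriented atlas, reducing to the case where $\supp\varphi$ lies in a single chart, where $M$ restricted to that chart is an open subset of $\R^m$ and the classical integration-by-parts formula applies directly to the continuous-on-strata form $\alpha'$ after noting the lower strata are negligible). This localization is the cleanest route and sidesteps the compactness hypothesis of Proposition \ref{cor_stokes_C0_manifolds} entirely, at the cost of invoking a partition of unity and the classical Stokes formula on the maximal stratum together with the differentiability of the stratified form to control the behavior near the lower-dimensional strata.
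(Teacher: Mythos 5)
Your overall plan is the same as the paper's: wedge $\alpha$ with the test form $\varphi$ to obtain a stratified $(m-1)$-form $\beta=\alpha\wedge\varphi$, observe that $d\beta=d\alpha\wedge\varphi+(-1)^{j}\alpha\wedge d\varphi$ stratum by stratum, and conclude by showing $\int_M d\beta=0$. You also correctly identify the real difficulty, namely that Proposition \ref{cor_stokes_C0_manifolds} is stated for a \emph{compact} stratified manifold with boundary while $M$ is merely a smooth submanifold. Where you go astray is in the way you resolve that difficulty.

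Your preferred route (``partition of unity, then classical Stokes on the maximal stratum, with the lower strata dismissed as negligible'') does not actually sidestep Proposition \ref{cor_stokes_C0_manifolds}; it quietly re-derives it, and the sketch has a gap. Lower-dimensional strata being of measure zero is irrelevant for an integration-by-parts identity: applying the classical Stokes formula separately on each top-dimensional stratum produces boundary terms supported on the interfaces between strata, and the vanishing of the \emph{sum} of those boundary contributions is exactly the content of the stratified Stokes' formula. That cancellation relies on the compatibility condition in Definition \ref{dfn_stratified_form}, not on a measure-zero argument. (Think of the Heaviside function: smooth off a measure-zero set, yet not weakly differentiable with the ``obvious'' weak derivative.) Your intermediate suggestion of intersecting with a big closed ball, $cl(M)\cap\overline B$, is closer in spirit but is not justified as written: this set need not be a compact manifold with boundary, nor does $cl(M)$ have any reason to be one. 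The paper resolves both issues at once by pushing the localization one step further: after the partition of unity it takes $M$ to \emph{be} a small open ball $B(\orm,\ep)$, so that $cl(M)=\overline B(\orm,\ep)$ is a genuine compact subanalytic manifold with boundary $S(\orm,\ep)$; one then extends $\Sigma$ to a stratification $\Sigma'$ of the closed ball compatible with the sphere, notes that $\beta$ (being zero near the sphere because $\varphi$ is compactly supported in $M$) extends by zero to a differentiable stratified form on $\Sigma'$, and applies Proposition \ref{cor_stokes_C0_manifolds} directly to $\overline B(\orm,\ep)$, with no approximation or exhaustion needed. You should adopt that localization rather than attempt to bypass the stratified Stokes' formula.
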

\begin{proof}  As the problem is local (we can use a partition of unity), we can assume that $M$ is a small open ball $B(\orm,\ep)$, and consequently that $cl(M)$ is a manifold with boundary $S(\orm,\ep)$.
 We are going to show that $\alpha'$ is weakly differentiable and that its weak exterior differential is the form $\alpha''$ defined (almost everywhere) by $\alpha''(x):= d \alpha_S (x)$, for $x\in  S \in \Sigma$ and $\dim S=m:=\dim M$. Let $\varphi\in \Lambda_{or} ^{m-j-1} (M)$ and observe that the form $\beta:=(\beta_S)_{S\in \Sigma}$ defined by $\beta_S(x):= \alpha_S(x)\wedge \varphi(x)$ is a differentiable stratified form and that its exterior differential is the stratified form $(d \alpha_S \wedge \varphi+(-1)^{j}\alpha_S\wedge d\varphi )_{S\in \Sigma}$.   It thus suffices to establish that $\int_M d\beta =0.  $

 Take a stratification $\Sigma'$ of $cl(M)$ such that  all the strata of $\Sigma$ are unions of strata of $\Sigma'$ and such that $S(\orm,\ep)$ is a union of strata. As $\varphi$ has compact support in $M$, the form $\beta$ gives rise to a stratified form on $\Sigma'$   which is identically zero on the strata of $S(\orm,\ep)$. The required equality then follows from Proposition \ref{cor_stokes_C0_manifolds}. 
\end{proof}
\end{subsection}

\begin{subsection}{Weakly smooth forms and subanalytic bi-Lipschitz maps.}\label{sect_lip_lp}Combining the results of sections \ref{sect_hor_C1} and \ref{sect_strat}, we now can prove that bi-Lipschitz subanalytic mappings (not necessarily smooth) naturally induce isomorphisms in $L^p$ cohomology. Let us here emphasize that, although differentiable forms are not required to be subanalytic, the subanalytic character of the mappings is essential. Let $M$ be a subanalytic smooth submanifold of $\R^n$.     
\begin{pro}\label{pro_pullback_weakly differentiable forms}
Let $h:M\to M'$  be a 
Lipschitz  subanalytic mapping, with   $M'\subset \R^k$ smooth submanifold.  For every smooth form $\omega$  on $M'$, the form $h^*\omega$ (which is well defined almost everywhere) is weakly differentiable and satisfies
$\overline{d} h^*\omega=h^* d \omega$, almost
everywhere.

Moreover, if $h$ is locally bi-Lipschitz then the same conclusion holds  for every weakly differentiable form $\omega$ on $M'$.
\end{pro}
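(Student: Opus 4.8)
The plan is to reduce the statement to the two ingredients developed in the preceding sections: the structure theory for subanalytic Lipschitz mappings via horizontally $C^1$ stratifications (Lemma \ref{lem_h_hor_C1}), and the fact that differentiable stratified forms are weakly differentiable (Lemma \ref{lem_stratified_are_weakly_diff}). The whole question being local, I would fix a point and work in a coordinate chart, so that it suffices to prove the statement for $h$ defined on an open subset of $\R^m$ with values in an open subset of $\R^k$. Throughout, the key elementary remark is that if $h$ is Lipschitz then $h^*\omega$ is defined almost everywhere (Rademacher), and it is bounded measurable (hence $L^1_{loc}$), since $\omega$ is smooth and $|d_xh|$ is essentially bounded locally; the same is true of $h^*d\omega$.

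\textbf{First step: the case of a single smooth $\omega$.} Apply Lemma \ref{lem_h_hor_C1} to $h$: since $h$ is Lipschitz, $|d_xh|$ is locally bounded, so there exist stratifications $\Sigma$ of $M$ and $\Sigma'$ of $M'$ making $h:(M,\Sigma)\to(M',\Sigma')$ a horizontally $C^1$ stratified mapping. For each stratum $S\in\Sigma$, the restriction $h_{|S}$ is $C^1$, so $(h_{|S})^*\omega$ is a continuous differential $j$-form on $S$; I claim the collection $\bigl((h_{|S})^*\omega\bigr)_{S\in\Sigma}$ is a \emph{differentiable stratified form} on $(M,\Sigma)$ in the sense of Definition \ref{dfn_stratified_form}. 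The compatibility condition across strata — that $(h_{|S})^*\omega(x_i,\xi_i)\to (h_{|S'})^*\omega(x,\xi)$ whenever $x_i\to x\in S'$ and $\xi_i\to\xi\in\otimes^jT_xS'$ — is exactly where the horizontally $C^1$ hypothesis is used: it gives $d_{x_i}h_{|S}(u)\to d_xh_{|S'}(u)$ for tangent vectors, hence convergence of the pulled-back multilinear forms, while continuity of $\omega$ on $M'$ and of $h$ handle the base point. Differentiability of this stratified form follows because $d\bigl((h_{|S})^*\omega\bigr)=(h_{|S})^*d\omega$ on each stratum, and the family $\bigl((h_{|S})^*d\omega\bigr)_{S\in\Sigma}$ is again a stratified form by the same horizontally $C^1$ argument applied to the smooth form $d\omega$. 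Now invoke Lemma \ref{lem_stratified_are_weakly_diff}: a differentiable stratified form on $\Sigma$ is weakly differentiable, and its weak exterior differential is obtained from the top-dimensional strata. Since the top-dimensional strata of $\Sigma$ are open and dense in $M$, the form $\alpha'$ attached to the stratified form $\bigl((h_{|S})^*\omega\bigr)_S$ coincides with $h^*\omega$ almost everywhere, and likewise its weak differential coincides with $h^*d\omega$ almost everywhere. This proves the first assertion.

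\textbf{Second step: the locally bi-Lipschitz case with $\omega$ merely weakly differentiable.} Fix a weakly differentiable $\omega$ on $M'$ and regularize it: let $R_i,A_i$ be the operators of Theorem \ref{thm_regularization_operators} on $M'$, so $R_i\omega$ is smooth and $R_i\omega\to\omega$, $dR_i\omega=R_i\db\omega\to\db\omega$ in $L^1_{loc}$ (using Remark \ref{rem_regularization}\eqref{item_rd} and, if necessary, cutting down to a relatively compact piece where the $L^1$ convergence of Theorem \ref{thm_regularization_operators}\eqref{item_cv_lp} applies). By the first step, each $h^*R_i\omega$ is weakly differentiable with $\db(h^*R_i\omega)=h^*(dR_i\omega)$ a.e. Because $h$ is \emph{locally bi-Lipschitz}, $h^{-1}$ is also Lipschitz locally, so the change of variables $u=h(x)$ has Jacobian bounded above and below locally; consequently $h^*R_i\omega\to h^*\omega$ and $h^*(dR_i\omega)\to h^*\db\omega$ in $L^1_{loc}(M)$. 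Since the weak differential is continuous for $L^1_{loc}$ convergence (pass to the limit in the defining identity $\int\alpha\wedge d\varphi=(-1)^{j+1}\int\db\alpha\wedge\varphi$ against a test form $\varphi$), the limit $h^*\omega$ is weakly differentiable with $\db h^*\omega=h^*\db\omega$ a.e.

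\textbf{The main obstacle} I expect is the verification in the first step that $\bigl((h_{|S})^*\omega\bigr)_{S\in\Sigma}$ genuinely satisfies the boundary-compatibility axiom of Definition \ref{dfn_stratified_form}: one has to be careful that the convergence $d_{x_i}h_{|S}(u_i)\to d_xh_{|S'}(u)$ supplied by the horizontally $C^1$ property is used with the \emph{correct} sequences of tangent vectors, and that pulling back a $j$-form (a multilinear object on $\otimes^jT_xS'$) preserves the limit — this needs the convergence to be uniform on the relevant unit sphere of tangent vectors, which again comes from horizontal $C^1$-ness together with local boundedness of the derivatives. One should also record (via Remark \ref{rem_hor_C1}) that the stratifications can be chosen compatible with any prescribed subanalytic subsets, which is what makes the local-to-global patching by a subanalytic partition of unity legitimate. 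Everything else is routine measure-theoretic bookkeeping.
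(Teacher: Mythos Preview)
Your proposal is correct and follows essentially the same approach as the paper: the first step uses Lemma \ref{lem_h_hor_C1} to obtain a horizontally $C^1$ stratified mapping, observes that $h^*\omega$ is then a differentiable stratified form, and applies Lemma \ref{lem_stratified_are_weakly_diff}; the second step approximates a weakly differentiable $\omega$ by smooth forms in $L^1_{loc}$ and passes to the limit using the local bi-Lipschitz hypothesis. The only cosmetic difference is that the paper simply invokes density of $\Omega^j_1(U)$ in $\overline{\Omega}^j_1(U)$ rather than naming the operators $R_i$ explicitly, and it is somewhat terser about the stratified-form verification you flag as the ``main obstacle''.
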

\begin{proof}
By Lemma \ref{lem_h_hor_C1}, $h$ is horizontally $C^1$ with
respect to some stratifications of $M$ and $M'$. For every $\omega\in \Omega^j(M')$, $h^*\omega$ thus gives rise to a differentiable stratified form on this stratification. By Lemma \ref{lem_stratified_are_weakly_diff},  this means that $h^*\omega$ is weakly differentiable, and thanks to Stokes' formula for stratified forms (Proposition \ref{cor_stokes_C0_manifolds}), the formula $\overline{d} h^*\omega=h^* d \omega$  easily follows by integration by parts.
 
  To prove the last statement, fix a form $\omega\in \ws^j(M')$, $j\in \N$ as well as an  open subset $U$ of $M'$ on which $\omega$ is $L^1$. 
As  $\Omega^j_{1}(U)$ is dense  in $\ws^j_{1}(U)$, we can find a sequence $\omega_i \in \Omega_{1}^j(U)$ such that   $\omega_i\to \omega$ and $d\omega_i\to \db \omega$ for the $L^1$ norm. If $h$ is bi-Lipschitz,  this implies that $h^*\omega_i$ tends to $h^*\omega$ and that $h^*d \omega_i$ tends to $h^*\db \omega$ for this norm. Moreover, since we have proved that the result holds true in the smooth case, we also know that for every $\varphi\in \Lambda_{or} ^{m-j-1}(U)$
$$\int_U h^*\omega_i \wedge d\varphi =(-1)^{j+1}\int_U d \omega_i\wedge \varphi  .$$ 
 Passing to the limit as $i\to \infty$, we get the desired result.
\end{proof}

This proposition leads us to the subanalytic bi-Lipschitz invariance of $L^p$ cohomology:

\begin{pro}\label{pro lp bi-lips invariant}
Let  $j\in \N$ and $p\in [1,\infty]$.  If $h:M\to M'$  is a 
  subanalytic bi-Lipschitz mapping, with $M'\subset \R^{n'}$ smooth submanifold, then $\hp ^j (M)\simeq \hp^j(M')$. 
\end{pro}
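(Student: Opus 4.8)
The plan is to deduce Proposition \ref{pro lp bi-lips invariant} from Proposition \ref{pro_pullback_weakly differentiable forms} together with Corollary \ref{cor_ws_isom_smooth}. First I would observe that a subanalytic bi-Lipschitz homeomorphism $h:M\to M'$ has the crucial feature that, because $h$ and $h^{-1}$ are both Lipschitz, pulling back along $h$ (or $h^{-1}$) multiplies the pointwise norm of a $j$-form by a bounded factor, and multiplies the volume form by the Jacobian $\jac h^{-1}$ (resp. $\jac h$), which is bounded above and away from zero almost everywhere. Hence for each $p\in[1,\infty]$ the linear map $h^*$ sends $L^p$ forms on $M'$ to $L^p$ forms on $M$, with a two-sided estimate $|h^*\omega|_p\sim|\omega|_p$, and similarly $(h^{-1})^*=(h^*)^{-1}$ preserves the $L^p$ condition.

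Next I would invoke Proposition \ref{pro_pullback_weakly differentiable forms}: since $h$ is subanalytic and bi-Lipschitz, $h^*$ carries weakly differentiable forms on $M'$ to weakly differentiable forms on $M$ and commutes with the weak exterior differential, $\db h^*\omega=h^*\db\omega$ almost everywhere. Combining this with the $L^p$ boundedness from the previous step, $h^*$ restricts to a cochain map $h^*:(\wsp^\bullet(M'),\db)\to(\wsp^\bullet(M),\db)$, and $(h^{-1})^*$ is a cochain map in the other direction; since $h^*\circ(h^{-1})^*=\id$ and $(h^{-1})^*\circ h^*=\id$ on the level of forms, they are mutually inverse isomorphisms of cochain complexes. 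Therefore they induce isomorphisms $\Hbp^j(M')\simeq\Hbp^j(M)$ for every $j$. Finally, applying Corollary \ref{cor_ws_isom_smooth}, which identifies $\Hbp^j$ with $\hp^j$ for the (genuinely) smooth complex, we obtain $\hp^j(M)\simeq\hp^j(M')$, as desired.

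The only subtlety — and the step I expect to require the most care — is that $h^*\omega$ need not be smooth even when $\omega$ is, so one genuinely cannot argue directly with the complexes $\op^\bullet$; this is precisely why the passage through the weakly differentiable complex $\wsp^\bullet$ and Corollary \ref{cor_ws_isom_smooth} is necessary, and it is the reason sections \ref{sect_weakly_smooth}, \ref{sect_hor_C1} and \ref{sect_strat} were developed. One should also be a little careful that $M'$ is assumed subanalytic so that Proposition \ref{pro_pullback_weakly differentiable forms} applies to it as source when pulling back along $h^{-1}$; this is implicit since $M'=h(M)$ is the subanalytic bi-Lipschitz image of a subanalytic set. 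Beyond that, the argument is a formal diagram chase, and I would keep the write-up short, citing Propositions \ref{pro_pullback_weakly differentiable forms} and the boundedness estimates, and concluding with Corollary \ref{cor_ws_isom_smooth}.
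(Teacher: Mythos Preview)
Your proposal is correct and follows essentially the same approach as the paper: the paper's proof simply cites Proposition \ref{pro_pullback_weakly differentiable forms} and Corollary \ref{cor_ws_isom_smooth} in a single sentence, leaving implicit the $L^p$ boundedness of $h^*$ that you spell out. Your more detailed justification of why $h^*$ and $(h^{-1})^*$ are mutually inverse cochain maps on $\wsp^\bullet$ is exactly what underlies the paper's terse argument.
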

\begin{proof}
By  the preceding proposition and Corollary \ref{cor_ws_isom_smooth},  $h$ induces an isomorphism between   $H_{p} ^j (M)$ and $H_{p}^j(M')$ for all 
$j$ and all $p$.
\end{proof}
Similarly,  subanalytic Lipschitz homotopies (not necessarily differentiable) induce operators on smooth forms exactly like in the case of smooth homotopies.
\begin{pro}\label{pro_homotopy}
Let $h:[0,1]\times M \to M$ be a  subanalytic Lipschitz homotopy and let $\pa_t$ denote the constant vector field $(1,0)$ on $[0,1]\times M$. If we set  for $x\in M$ and  $\omega\in \Omega^j(M)$:
 $$\hn\omega(x):=\int_0 ^1 (h^*\omega)_{\pa_t}(t,x)dt,$$
then   we have
 \begin{equation}\label{eq_chain_homotopy}
  \db \hn \omega+\hn d\omega=h_1^* \omega -h_0 ^*\omega,
 \end{equation}
where $h_i:M\to M$, $i=0,1$, is defined by $h_i(x)=h(i,x)$. 
\end{pro}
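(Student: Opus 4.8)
The plan is to follow the standard chain-homotopy formula for differentiable homotopies, but with two modifications forced by the fact that $h$ is only Lipschitz (hence not smooth): the pulled-back form $h^*\omega$ must be understood as a weakly differentiable form, and all the identities must be verified in the weak sense. First I would note that by Lemma \ref{lem_h_hor_C1} the map $h$ is horizontally $C^1$ with respect to suitable stratifications $\Sigma$ of $[0,1]\times M$ and $\Sigma'$ of $M$; hence for any smooth $\omega\in\Omega^j(M)$ the form $h^*\omega$ is a differentiable stratified form, and by Lemma \ref{lem_stratified_are_weakly_diff} it is weakly differentiable with $\db h^*\omega = h^*d\omega$ (this is also recorded in Proposition \ref{pro_pullback_weakly differentiable forms}). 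The same applies to $h_0^*\omega$ and $h_1^*\omega$, which are in fact smooth since $h_0$ and $h_1$ map $M$ into $M$ smoothly along strata; and the operator $\hn$ makes sense because $(h^*\omega)_{\pa_t}$, integrated over $t\in[0,1]$, produces a well-defined $L^1_{loc}$ form on $M$ — indeed a differentiable stratified form, again weakly differentiable.

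The core of the argument is the identity (\ref{eq_chain_homotopy}). I would prove it in the smooth-homotopy case first — there it is the classical formula relating the exterior derivative, the interior product with $\pa_t$, and the Lie derivative (Cartan's magic formula integrated over $[0,1]$), so one has $\db\hn\omega+\hn d\omega = h_1^*\omega - h_0^*\omega$ whenever $h$ is a $C^\infty$ homotopy and $\omega$ smooth. To transfer this to the Lipschitz subanalytic case, the cleanest route is to argue stratum by stratum: on each top-dimensional stratum of $\Sigma$, $h$ is $C^1$, so the pointwise identity of the integrands holds a.e.; then one checks that all four terms ($\db\hn\omega$, $\hn d\omega$, $h_1^*\omega$, $h_0^*\omega$), interpreted as weakly differentiable forms via Lemma \ref{lem_stratified_are_weakly_diff}, agree as currents. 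Concretely, for a test form $\varphi\in\Lambda_{or}^{m-j-1}(M)$ one writes $\int_M \db\hn\omega\wedge\varphi = (-1)^{j}\int_M \hn\omega\wedge d\varphi$ by the definition of the weak differential, expands $\hn\omega$ via its integral over $t$, applies Fubini, and uses Stokes' formula for stratified forms (Proposition \ref{cor_stokes_C0_manifolds}) on $[0,1]\times M$ to integrate by parts in the $[0,1]\times M$ variables; the boundary contributions at $t=0$ and $t=1$ produce exactly $-\int_M h_0^*\omega\wedge\varphi$ and $\int_M h_1^*\omega\wedge\varphi$, while the interior term reconstitutes $-\int_M \hn d\omega\wedge\varphi$.

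Alternatively — and this may be the slicker presentation — one can use a density/regularization argument in the spirit of Proposition \ref{pro_pullback_weakly differentiable forms}: approximate $h$ is not available, but one can instead keep $h$ fixed and observe that both sides of (\ref{eq_chain_homotopy}) are continuous (in the appropriate $L^1_{loc}$ topology) in the smooth form $\omega$, and that the identity is linear in $\omega$; since it is local on $M$, one reduces to a chart and invokes the classical smooth case after checking that the stratified-form machinery makes $\hn$, $\db$, and the pull-backs behave as expected. Either way, the main obstacle is bookkeeping: one must be careful that $h^*\omega$ and $h^*d\omega$ are the \emph{weak} pull-backs, that $\hn\omega$ is genuinely weakly differentiable (so that $\db\hn\omega$ makes sense), and that the Stokes/Fubini manipulation on $[0,1]\times M$ is legitimate despite $h$ being merely Lipschitz — all of which rest on Lemmas \ref{lem_h_hor_C1} and \ref{lem_stratified_are_weakly_diff} and Proposition \ref{cor_stokes_C0_manifolds}. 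There is no hard new estimate; the delicate point is purely that the classical smooth computation survives verbatim once the objects are correctly reinterpreted in the weakly differentiable / stratified setting.
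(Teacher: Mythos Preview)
Your proposal is correct and follows essentially the same route as the paper: use Lemma~\ref{lem_h_hor_C1} and Proposition~\ref{pro_pullback_weakly differentiable forms} to interpret $h^*\omega$ as a weakly differentiable (stratified) form with $\db h^*\omega=h^*d\omega$, then pair $\hn\omega$ with $d\varphi$ for a compactly supported test form, pass to an integral over $[0,1]\times M$, and apply Stokes' formula for stratified forms (Proposition~\ref{cor_stokes_C0_manifolds}) to produce the boundary terms $h_1^*\omega$ and $h_0^*\omega$. One small slip: the test form should lie in $\Lambda_{or}^{m-j}(M)$ rather than $\Lambda_{or}^{m-j-1}(M)$, since $\hn\omega$ has degree $j-1$.
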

\begin{proof}
Proposition \ref{pro_pullback_weakly differentiable forms} implies that $h^*\omega$ is weakly differentiable and that $\db h^*\omega=h^*d\omega$. Moreover,  thanks to Lemma \ref{lem_h_hor_C1},  $h^*\omega$ gives rise to a stratified form. 
 
 Thanks to our Stokes' formula for  stratified forms, we now can end the proof with an integration by parts. Namely, if $\varphi \in \Lambda^{m-j} _{or} (M)$,  regarding it as a form on $[0,1]\times M $ constant with respect to $t$, we can write (for relevant orientations):
\begin{eqnarray}\label{eq_ipp}
\int_{M} \K\omega \wedge d\varphi&=&\int_{ [0,1]\times M
} h^*\omega \wedge d\varphi
\nonumber\\&=&(-1)^{j} \int_{[0,1]\times M
}\overline{d}(
h^*\omega \wedge \varphi)- (-1)^{j}\int_{[0,1]\times M
} \overline{d}h^*\omega \wedge\varphi.\end{eqnarray}
 By Stokes' formula for stratified forms,  we also have:
$$\int_{[0,1]\times M
}\overline{d}(
h^*\omega \wedge \varphi) =\int_M h_1^* \omega\wedge \varphi- \int_M h_0^* \omega\wedge \varphi. $$
Together with (\ref{eq_ipp}), this yields the desired equality.
\end{proof}
\end{subsection}

\end{section}

\begin{section}{Some operators on $L^p$ forms}\label{sect_hom_hop}
We are going to define some operators on  $L^p$ forms on subanalytic varieties which will be useful to establish our Poincar\'e Lemma for  $L^p$ cohomology  (Lemma \ref{lem x normal Poincare lemma}). The usual Poincar\'e Lemma is devoted to smooth forms on an open ball or more generally on the so called star-shaped domains.
On this kind of domains, it is well-known that some retractions by deformation give rise to differential operators on forms. The  local conic structure given in Theorem \ref{thm_local_conic_structure} will make it possible to define some operators by the same process as on the star-shaped domains. We start by defining them and then study their properties. 

We fix for all this section an $m$-dimensional subanalytic submanifold $M$ of $\R^n$.
Set $X:=cl(M)$, fix $x_0\in X$,  
and apply Theorem \ref{thm_local_conic_structure}  to the germ of $X$ at $x_0$. This provides a positive real number $\ep$ as well as a Lipschitz  subanalytic homeomorphism
  $$H: x_0* (S(x_0,\ep)\cap X)\to  \Bb(x_0,\ep) \cap X,$$
preserving the distance to $x_0$ and satisfying conditions $(i)$ and $(ii)$ of the latter theorem.

   For simplicity, we then set for  this section $$N_{x_0}=S(x_0,\ep)\cap M$$ and $$U_{x_0}=B(x_0,\ep)\cap M.$$
   We can assume (see Remark \ref{rem_conical_same}) that $H$ maps the open cone $(x_0*N_{x_0})\setminus N_{x_0}\cup\{x_0\}$ onto $U_{x_0}$.
  In particular, $H$ gives rise to a  globally subanalytic homeomorphism:
  \begin{equation}\label{eq h pour K_nu}
h:(0,1) \times N_{x_0} \to U_{x_0}, \qquad (t,x)\mapsto h(t,x):=H(tx).\end{equation}
   For simplicity, we also set 
   $$\zx=(0,1)\times N_{x_0}.$$
   
%

\begin{subsection}{The operator $\K_\nu, \, \nu>0$} 
Denote by $\pa_t $ the constant vector field $(1,0)$   on $(0,1)\times N_{x_0}$ and   fix an $L^1$ differential $j$-form $\omega$ on $\ux$ with $j\ge 1$.
 We first define a differential form $\hn_\nu \omega$ by setting  for almost every $(t,y) \in (0,1)\times N_{x_0}$ and $0<\nu\leq 1$:
 \begin{equation}\label{eq_def_alpha}\hn_\nu\omega(t,y)=\int_\nu ^t  (h^*\omega)_{\pa_t} (s,y) ds.\end{equation} The desired operator is then defined by pushing forward this differential form by means of $h$: 
 \begin{equation}\label{eq_def_K}
  \K_\nu \omega=h^{-1*}\hn_\nu \omega.
 \end{equation}
 

This defines an operator $\K_\nu$ on $L^1$ differential forms for every $\nu \in (0,1]$. 

\subsection{The operator $\K_0$} The case $\nu=0$ is more delicate since we are not sure that the mapping  $t \mapsto (h^*\omega)_{\pa_t} (t,y)$ is $L^1$ on $[0,1]$. This fact is however clearly true if $\omega$ is an $L^\infty$ form.
The proposition below shows that we actually can  define $\K_0 \omega$ analogously when $\omega$ is an $L^p$ form with $p$ sufficiently big.  
\begin{prodfn}\label{pro_dfn_K_0}
 For $p\in [1,\infty]$ sufficiently big, the form $ (h^*\omega)_{\pa_t} $ is $L^1$ on $(0,1)\times N_{x_0}$ for every $L^p$ differential $j$-form $\omega$, $j\ge 1$.
 
 For such $p$ and $\omega$, the differential form \begin{equation}\label{eq_h_0}\hn_0\omega(t,y):=\int_0 ^t  (h^*\omega)_{\pa_t} (s,y) ds\end{equation} is thus (almost everywhere on $(0,1)\times N_{x_0}$) well-defined and we can set  
 $$\K_0 \omega=h^{-1*}\hn_0 \omega.$$
\end{prodfn}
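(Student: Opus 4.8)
The plan is to reduce the $L^1$-integrability of $(h^*\omega)_{\pa_t}$ on $\zx=(0,1)\times N_{x_0}$ to a pointwise estimate on the norm of this form in terms of $|\omega\circ h|$ and the derivatives of $h$, and then apply a H\"older-type argument using the change of variables $h$. First I would fix an $L^p$ form $\omega$ on $\ux$ and work in coordinates on $\zx$ adapted to the product structure, writing $(h^*\omega)_{\pa_t}(t,y)$ as a contraction of the pull-back. By Proposition \ref{pro_pullback_weakly differentiable forms}, $h^*\omega$ is well defined almost everywhere (and weakly differentiable), so the form $(h^*\omega)_{\pa_t}$ makes sense a.e. The key pointwise bound is
$$|(h^*\omega)_{\pa_t}(t,y)| \le |\pa_t h(t,y)|\cdot |d_{(t,y)} h|^{j-1}\cdot |\omega(h(t,y))|,$$
where $|\pa_t h|$ is the norm of the velocity vector of the path $s\mapsto h(s,y)$; since $H$ preserves the distance to $x_0$ and $h(t,y)=H(ty)$, one has $|\pa_t h(t,y)|\sim |y-x_0| \sim \ep$, i.e.\ it is bounded. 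The Lipschitz character of $H$ (Theorem \ref{thm_local_conic_structure}) gives $|d_{(t,y)}h|\lesssim 1$ away from $\{t=0\}$, but near $t=0$ this derivative may blow up, and controlling that blow-up is where the \L ojasiewicz inequality enters.

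The main step is therefore to estimate $\int_{\zx}|(h^*\omega)_{\pa_t}|$. Using the bound above and Fubini, it suffices to control $\int_{\zx}|d_{(t,y)}h|^{j-1}|\omega\circ h|$. I would split $\zx$ into the dyadic-type annuli $A_k=\{2^{-k-1}<t\le 2^{-k}\}$ (equivalently, level sets of the distance to $x_0$) and on each piece use the substitution $u=h(t,y)$. On $A_k$, the restriction of $h^{-1}$ is Lipschitz by property \eqref{item_r_bi} of Theorem \ref{thm_local_conic_structure}, and the scaling property \eqref{item_H_bi} ($r_s$ is $Cs$-Lipschitz) combined with homogeneity in the radial direction yields that $|d h|$ and the Jacobian $\jac h^{-1}$ on $A_k$ grow at most polynomially in $2^k$, say $|dh|\lesssim 2^{Nk}$ and $\jac h^{-1}\lesssim 2^{N'k}$ for some $N,N'$ depending only on $X$ and the dimension (this is a globally subanalytic bound, hence valid by the \L ojasiewicz inequality, Proposition \ref{pro_lojasiewicz_inequality}, applied to the subanalytic functions $|dh|$ and $\jac h^{-1}$ against the distance to $x_0$). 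Then, by H\"older's inequality with exponents $p$ and $q=p/(p-1)$ applied on $h(A_k)\subset \ux$,
$$\int_{A_k}|dh|^{j-1}|\omega\circ h|\ \lesssim\ 2^{(j-1)Nk}\,|\omega|_{p,h(A_k)}\,\Big(\int_{h(A_k)}\jac h^{-1}(u)^{q/p}\,du\Big)^{1/q}\ \lesssim\ 2^{Ck}\,\mathrm{vol}(h(A_k))^{1/q}\,|\omega|_p,$$
where I also used $|\omega|_{p,h(A_k)}\le|\omega|_p$. Finally $\mathrm{vol}(h(A_k))\lesssim 2^{-mk}$ (the annulus $h(A_k)$ sits at distance $\sim 2^{-k}$ from $x_0$ inside an $m$-manifold, so its volume decays like $2^{-mk}$, up to the same kind of subanalytic correction). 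Putting these together, $\int_{A_k}|(h^*\omega)_{\pa_t}|\lesssim 2^{(C-m/q)k}|\omega|_p$, and summing over $k\ge 0$ converges as soon as $C<m/q$, i.e.\ as soon as $q$ is small enough, i.e.\ $p$ large enough. This yields the first assertion; the second assertion (well-definedness of $\hn_0\omega$ a.e.\ and of $\K_0\omega=h^{-1*}\hn_0\omega$) is then immediate from Fubini, since $\int_{N_{x_0}}\big(\int_0^1|(h^*\omega)_{\pa_t}(s,y)|ds\big)dy<\infty$ forces $s\mapsto (h^*\omega)_{\pa_t}(s,y)$ to be $L^1$ on $[0,1]$ for a.e.\ $y$, and $h^{-1}$ being locally bi-Lipschitz transports this a.e.-defined form back to $\ux$.

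The hard part will be making the exponent bookkeeping clean, i.e.\ producing honest subanalytic functions to which Proposition \ref{pro_lojasiewicz_inequality} applies so that the powers $N$, $N'$ in $|dh|\lesssim t^{-N}$ and $\jac h^{-1}\lesssim t^{-N'}$ are legitimate, and then checking that the resulting threshold on $p$ is finite and independent of $j$ (one can take the worst $j$, namely $j=m$, since for $j\ge 1$ the exponent $(j-1)$ is largest there). One subtlety is that $d h$ only exists almost everywhere, so the cleanest route is to avoid pointwise bounds on $dh$ altogether and instead argue directly with the change of variables $u=h(t,y)$ on each $A_k$: by property \eqref{item_r_bi}, $h^{-1}$ is genuinely Lipschitz on each $A_k$ with Lipschitz constant controlled by a subanalytic function of $2^k$, which already gives $\jac h^{-1}\lesssim 2^{N'k}$ a.e.\ and, for the contraction by $\pa_t$, a bound on the single directional derivative $|\pa_t h|$ coming from distance-preservation (so that factor is harmless), while the remaining $(j-1)$ wedge factors are absorbed by $\|d_{(t,y)}h\|^{j-1}$, whose a.e.\ bound $\lesssim 2^{Nk}$ follows from the Lipschitz constant of $r_{s}$ in \eqref{item_H_bi} via $h(t,y)=r_{t}(H(y))$ rescaled. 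I would present this as the core lemma and derive the proposition-and-definition from it.
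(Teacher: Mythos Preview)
Your approach can be made to work, but it rests on a misconception that leads to unnecessary complication. You write that ``near $t=0$ this derivative [$|d_{(t,y)}h|$] may blow up,'' and then build the dyadic decomposition to control that blow-up. But $h$ is globally Lipschitz: by definition $h(t,y)=H(ty)$ (interpreting $ty$ as the point $x_0+t(y-x_0)$ in the cone $x_0*N_{x_0}$), the map $(t,y)\mapsto ty$ is Lipschitz on the bounded set $[0,1]\times N_{x_0}$, and $H$ is Lipschitz by Theorem~\ref{thm_local_conic_structure}. Hence $|d_{(t,y)}h|$ is bounded on all of $\zx$, including near $t=0$. What does degenerate as $t\to 0$ is $\jac h$ (it tends to $0$, equivalently $\jac h^{-1}$ blows up), and that is the only quantity one needs \L ojasiewicz for.

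Once you use that $|dh|$ is bounded, the paper's proof is a few lines. From $|(h^*\omega)_{\pa_t}|\lesssim |\omega\circ h|$ it suffices to show $\omega\circ h\in L^1(\zx)$. \L ojasiewicz (Proposition~\ref{pro_lojasiewicz_inequality}) applied to the subanalytic function $\jac h$ gives an integer $k$ with $s^k\lesssim \jac h(s,x)$. Then a single H\"older splitting
\[
\int_{\zx}|\omega\circ h|=\int_{\zx}\bigl(|\omega\circ h|(\jac h)^{1/p}\bigr)\cdot(\jac h)^{-1/p}
\le |\omega|_p\cdot\Bigl(\int_{\zx}(\jac h)^{-q/p}\Bigr)^{1/q}
\]
reduces the question to $\int_0^1 s^{-kq/p}\,ds=\int_0^1 s^{-k/(p-1)}\,ds<\infty$, i.e.\ $p>k+1$. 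No dyadic annuli, no volume estimates, no separate control of $|dh|^{j-1}$.

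Two smaller points about your write-up: (i) in your H\"older step the exponent on $\jac h^{-1}$ should be $q$, not $q/p$ (or, if you keep the integral over $A_k$ rather than $h(A_k)$, the exponent on $\jac h$ should be $-q/p$); as written the two versions differ by a factor of $\jac h$. (ii) The volume estimate $\mathrm{vol}(h(A_k))\lesssim 2^{-mk}$ is not automatic for a possibly singular $m$-manifold germ; you would again need a \L ojasiewicz-type argument to justify a polynomial decay, which is another reason the direct approach above is cleaner.
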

\begin{proof}
 %

  The function $(s,x)\mapsto \jac h(s,x)$ (this Jacobian is well defined  on a subanalytic dense subset of $\zx$) is globally subanalytic.   As $h$ is bi-Lipschitz above the complement  of every neighborhood of the origin, $\jac h(s,x)$ can only tend to zero when $s$ goes to zero.    Therefore, by \L ojasiewicz's inequality (see (\ref{eq_loj})), there is a positive integer $k$ and a constant $C$ such that for $(s,x)\in (0,1)\times N_{x_0}$
  \begin{equation}\label{eq_jac_h_s}s^k\le C \jac h(s,x).\end{equation}
           We are going to prove that $ (h^*\omega)_{\pa_t} $ is $L^1$ for all $L^p$ forms $\omega$ when $p>k+1$.   Fix such a form $\omega$ and such a real number $p$.                                         

          Since $h$ has bounded first derivative, it is enough to show that $\omega\circ h$ is $L^1$. For this purpose, let us notice that since $\omega$ is $L^p$, so is $|\omega\circ h|\cdot (\jac h)^{\jpp}$.  It thus suffices to show that $(\jac h)^{-\jpp}$ is $L^q$, where $q\ge 1$ is the H\"older conjugate of $p$.  To prove this, write
$$\int_{\zx} (\jac h)^{-\frac{q}{p}} \overset{(\ref{eq_jac_h_s})}{\lesssim}\int_{N_{x_0}}\int_0^1 s^{-\frac{kq}{p}}ds \lesssim \int_0^1 s^{-\frac{kq}{p}}ds  = \int_0 ^1 s^{\frac{k}{p-1}} ds <\infty,$$
since $k<p-1$. This establishes that $\omega\circ h$ is $L^1$, which  yields that so is $ (h^*\omega)_{\pa_t} $.
  \end{proof}
\end{subsection}
  \begin{subsection}{The homotopy $\rho_\nu$}\label{sect_r_s}
  Given $\nu \in [0,1]$, we can define a homotopy $\rho_\nu:(0,1]\times \ux \to \ux$ as follows. Let for $(t,s,y) \in (0,1)\times(0,1)\times \nx$  
  \begin{equation}\label{eq_theta_nu}
\theta_\nu(t,s,y):=h(ts+(1-t)\nu, y ).   
  \end{equation}
   We then push-forward $\theta_\nu$ by means of $h^{-1}$ by setting for $(t,x)\in (0,1]\times \ux$, $$\rho_\nu(t,x):=\theta_\nu (t, h^{-1}(x)).$$  
  
   As the homeomorphism $H$ (used at the beginning of this section to define $\K_\nu$) was assumed to send the open cone  $(x_0*N_{x_0})\setminus (N_{x_0}\cup \{x_0\})$ onto $\ux$, we see that this homotopy stays in  $U_{x_0}$ for all $t\in(0,1]$. Notice also that it follows from  Theorem \ref{thm_local_conic_structure} that this mapping is locally Lipschitz near every point of $(0,1]\times\ux$.
   
   Remark also that for every $x\in \ux$,  $\rho_0(t,x)$ coincides with $r_t(x)$, where $r$ is the mapping given in the latter theorem (although  the mapping $r$ is defined on $X$, we will regard it in the sequel as a mapping from $(0,1]\times \ux$ into $\ux$ and $r_t$ as a mapping from $\ux$ to itself for all $t\in (0,1]$).

  Let us here stress the fact that Theorem \ref{thm_local_conic_structure}  ensures that there is a constant $C$ such that for all $t\in (0,1]$, the mapping $r_t:\ux\to \ux$ is $Ct$-Lipschitz. Moreover, for every $t\in (0,1]$, the mapping $r_t$ is bi-Lipschitz.
  
  Note also that since $H$ preserves the distance to $x_0$, we have  for all $(t,x)\in (0,1] \times \ux$:
 \begin{equation}\label{eq_r_s dist to xo}
                                                                     |r_t(x)-x_0|=t|x-x_0|.
                                                                    \end{equation}
  

   The next proposition provides an alternative definition of the operator $\K_0$ using $r$. This kind of computation is of course very classical.  As $r$ is Lipschitz, this characterization will be helpful to estimate the $L^p$ norm of $\K_0\omega$ in section \ref{sect_lp_bounds}.

\begin{pro}\label{pro_K_0}  
For every $L^1$ form $\omega$ on $\ux$, we have for each $\nu\in (0,1]$:
  \begin{equation}\label{eq_K_nu_2eme_forme} \K_\nu \omega(x) = \int_0 ^1  (\rho_\nu^*\omega)_{\pa_t} (t,x)dt, \end{equation}
 where $\pa_t$ is the constant vector field $(1,0)$ on $[0,1]\times \ux$.
 
Moreover, if $r:(0,1)\times \ux \to \ux$ is the just above defined mapping, we have for each $p\in [1,\infty]$  large enough, each $L^p$ form $\omega$, and each $x\in \ux$:
  \begin{equation}\label{eq_K_0_2eme_forme}\K_0 \omega(x)= \int_0 ^1  (r^*\omega)_{\pa_t} (t,x)dt,  \end{equation}
                                                                  \end{pro}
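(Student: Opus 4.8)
The plan is to establish the two formulas (\ref{eq_K_nu_2eme_forme}) and (\ref{eq_K_0_2eme_forme}) by directly comparing the two families of homotopies: the "conical" homotopy $h(ts+(1-t)\nu,y)$ used to define $\hn_\nu$, and the homotopies $\rho_\nu$ (resp.\ $r$) obtained by pushing forward $\theta_\nu$ via $h^{-1}$. The key observation is that both descriptions compute the same chain homotopy operator, just parametrized differently, so the identity is essentially a change of variables in the integral over the deformation parameter. First I would recall, from the definitions (\ref{eq_def_alpha})--(\ref{eq_def_K}) and (\ref{eq_theta_nu}), that $\K_\nu\omega = h^{-1*}\hn_\nu\omega$ where $\hn_\nu\omega(t,y) = \int_\nu^t (h^*\omega)_{\pa_t}(s,y)\,ds$, while the right-hand side of (\ref{eq_K_nu_2eme_forme}) is, by definition of $\rho_\nu$, the operator $\hn$ of Proposition~\ref{pro_homotopy} applied to the homotopy $\rho_\nu$. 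Since $\rho_\nu(t,\cdot) = \theta_\nu(t,h^{-1}(\cdot))$ and $\theta_\nu(t,s,y) = h(ts+(1-t)\nu,y)$, the map $(t,x)\mapsto \rho_\nu(t,x)$ factors as $h\circ g_\nu \circ (\mathrm{id}\times h^{-1})$ where $g_\nu(t,(s,y)) = (ts+(1-t)\nu, y)$ — but here I must be careful, as $\rho_\nu$ is a homotopy in the $M$-direction only, while in the factored form $h^{-1}$ already eliminates the $(s,y)$ coordinates; the cleanest route is to work on $\zx = (0,1)\times \nx$ throughout and only push forward at the end.

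The core computation is then: pulling back $\omega$ by $\rho_\nu$ and contracting with $\pa_t$, one uses the chain rule together with the fact that along the curve $s\mapsto ts+(1-t)\nu$ the velocity in the $s$-variable is exactly $(t-\nu)$ when differentiating in $t$ — wait, more precisely, writing $\phi_\nu(t,x) := h^{-1}(\rho_\nu(t,x))$ one has $\phi_\nu(t,x) = (t\,\sigma(x)+(1-t)\nu,\, \tau(x))$ where $(\sigma(x),\tau(x)) := h^{-1}(x)$, so $\partial_t \phi_\nu = (\sigma(x)-\nu)\,\pa_s$, a scalar multiple of the $\pa_t$-field on $\zx$. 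Hence $(\rho_\nu^*\omega)_{\pa_t}(t,x) = (\sigma(x)-\nu)\,(h^*\omega)_{\pa_t}\big(t\sigma(x)+(1-t)\nu,\tau(x)\big)$ pulled back appropriately by $h^{-1}$, and integrating $\int_0^1 \cdots\, dt$ and substituting $u = t\sigma(x)+(1-t)\nu$ (so $du = (\sigma(x)-\nu)\,dt$, $u$ ranging from $\nu$ to $\sigma(x)$) yields exactly $h^{-1*}\int_\nu^{\sigma(x)} (h^*\omega)_{\pa_t}(u,\tau(x))\,du = \K_\nu\omega(x)$. This proves (\ref{eq_K_nu_2eme_forme}). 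For (\ref{eq_K_0_2eme_forme}), I would take $\nu\to 0$: by Proposition and Definition~\ref{pro_dfn_K_0}, for $p$ large enough $(h^*\omega)_{\pa_t}$ is $L^1$ on $\zx$, so the substitution above is legitimate with $\nu = 0$ and $r = \rho_0$, giving the formula directly (alternatively, observe $\rho_0 = r$ was already noted in the text, and the same computation applies verbatim with $\nu=0$).

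The main obstacle, I expect, is not the formal change of variables but justifying the manipulations at the level of integrability and of weak differentiation: the forms $h^*\omega$ and $\rho_\nu^*\omega$ need not be smooth, only weakly differentiable (Proposition~\ref{pro_pullback_weakly differentiable forms}), so interchanging the contraction-with-$\pa_t$ operation, the $dt$-integral, and the push-forward $h^{-1*}$ requires that all the relevant integrands be $L^1$ and that the Fubini-type and change-of-variables steps be applied to the almost-everywhere-defined pullbacks. For $\nu>0$ this is unproblematic since $h$ is bi-Lipschitz away from $x_0$ and $\omega$ is $L^1$; for $\nu = 0$ it is precisely the integrability statement of Proposition and Definition~\ref{pro_dfn_K_0} (valid for $p$ large) that makes the argument go through, which is why the hypothesis "$p$ large enough" reappears in the statement. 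I would therefore organize the proof as: (1) reduce everything to an identity of $L^1$ forms on $\zx$; (2) carry out the change of variables $u = ts+(1-t)\nu$ in the inner integral; (3) push forward by $h^{-1*}$; (4) for the case $\nu=0$, invoke Proposition and Definition~\ref{pro_dfn_K_0} to license the substitution and identify $\rho_0$ with $r$.
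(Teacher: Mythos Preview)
Your approach is correct and matches the paper's: both proofs amount to the change of variables $s = ut + (1-u)\nu$ (your $u = t\sigma(x) + (1-t)\nu$, with Jacobian $\sigma(x)-\nu$) in the integral defining $\hn_\nu\omega$, identifying the result with $\int_0^1(\theta_\nu^*\omega)_{\partial_u}\,du$ and then pushing forward via $h^{-1}$. The case $\nu=0$ is handled identically in both, by invoking the integrability from Proposition~\ref{pro_dfn_K_0} and the observation $\rho_0 = r$.
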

           \begin{proof}Fix  $(t,y)\in \zx$ and  $\nu\in (0,1]$. Making the substitution $s=ut+(1-u)\nu$, $u\in [0,1]$, in the integral defining $\hn_\nu\omega(t,y)$ (equality (\ref{eq_def_alpha})) we obtain 
      $$ \hn_\nu \omega (t,y)     =t\int_0 ^1 (h^*\omega)_{\pa_t}(ut+(1-u)\nu,y) du=\int_0 ^1 (\theta_\nu^*\omega)_{\pa_u} (u,t,y) du, $$
            where $\pa_u$ is the constant vector field $(1,0,0)$ on $[0,1]^2\times \ux$ (and $\theta_\nu$ is as in (\ref{eq_theta_nu})).  As  $\K_\nu=h^{-1*} \hn_\nu $ and $\rho_\nu(u,x)=\theta_\nu(u, h^{-1}(x))$ for all $u$, after a pull-back of by means of $h^{-1}$, we get (\ref{eq_K_nu_2eme_forme}).

       Observe that if the necessary integrability conditions are satisfied then the  above computation applies in the case where $\nu=0$ as well. Hence,  since $\rho_0(t,x)=r(t,x)$,  this argument yields (\ref{eq_K_0_2eme_forme}) for all $p\in [1,\infty]$ sufficiently large for the conclusion of  Proposition \ref{pro_dfn_K_0}  to hold (i.e., for the form which is integrated in (\ref{eq_h_0}) to be $L^1$).
           \end{proof}
\end{subsection}

\begin{subsection}{$\K_\nu$ and  weakly differentiable forms}
 Given $\nu\in (0, 1)$, let   $\pi_\nu:=h\circ P_\nu \circ h^{-1}$, where $P_\nu (t,x):=(\nu ,x)$.    

%
\begin{pro}\label{pro_proprietes_de_K_nu}
 For all $\omega \in \Omega ^j_1 (\ux)$, $j\ge 1$, and all $\nu \in (0,1)$, the form $\K_\nu\omega$ is  weakly differentiable and we have: 
\begin{equation}\label{eq_K_homot_operator}\overline{d}\K_\nu\omega+\K_\nu d\omega=\omega-\pi_\nu^*\omega.\end{equation}
In particular,  if $\omega$ is equal to zero in the vicinity of $N_{x_0}$ then  we have:
\begin{equation}\label{eq_K_homot_operator_cpct_supp}\overline{d}\K_1\omega+\K_1 d\omega=\omega.\end{equation}
\end{pro}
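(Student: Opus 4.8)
The plan is to deduce (\ref{eq_K_homot_operator}) from the chain homotopy formula for Lipschitz subanalytic homotopies already established in Proposition \ref{pro_homotopy}, applied to the homotopy $\rho_\nu$. First I would recall, from Proposition \ref{pro_K_0}, the alternative formula
$$\K_\nu\omega(x)=\int_0^1(\rho_\nu^*\omega)_{\pa_t}(t,x)\,dt=\hn\omega(x),$$
where $\hn$ is exactly the operator of Proposition \ref{pro_homotopy} associated to the homotopy $h=\rho_\nu:[0,1]\times\ux\to\ux$ (after checking that $\rho_\nu$ extends continuously to $\{t=0\}$, being the composition of the Lipschitz map $h$ with an affine reparametrization in $t$; this is where one uses $\nu>0$, so the endpoint $t=0$ lands at $\pi_\nu$ rather than at the singular vertex). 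Since $\rho_\nu$ is subanalytic and Lipschitz on $[0,1]\times\ux$ — which follows from the local Lipschitz property noted in Section \ref{sect_r_s} together with the fact that on a neighborhood of $\{t=0\}$ the map factors through $h$ and the bi-Lipschitz structure away from $x_0$ keeps everything controlled — Proposition \ref{pro_homotopy} applies and gives
$$\db\hn\omega+\hn d\omega=(\rho_\nu)_1^*\omega-(\rho_\nu)_0^*\omega.$$

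The second step is to identify the two endpoint maps. By construction $\theta_\nu(1,s,y)=h(s,y)$, so $(\rho_\nu)_1=h\circ h^{-1}=\mathrm{id}_{\ux}$, whence $(\rho_\nu)_1^*\omega=\omega$. On the other hand $\theta_\nu(0,s,y)=h(\nu,y)$ does not depend on $s$, so $(\rho_\nu)_0=h\circ P_\nu\circ h^{-1}=\pi_\nu$, giving $(\rho_\nu)_0^*\omega=\pi_\nu^*\omega$. Substituting into the homotopy formula and using $\hn\omega=\K_\nu\omega$ yields (\ref{eq_K_homot_operator}), and in particular that $\K_\nu\omega$ is weakly differentiable with $\overline{d}\K_\nu\omega$ equal to the right-hand side minus $\K_\nu d\omega$ (which is $L^1_{loc}$, since $\K_\nu$ preserves $L^1$-type regularity by the same substitution-of-variables estimate used in Proposition and Definition \ref{pro_dfn_K_0}).

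For the final assertion (\ref{eq_K_homot_operator_cpct_supp}), take $\nu=1$: then $\pi_1(t,x)$ collapses everything to $\{t=1\}$, i.e. $\pi_1=h\circ P_1\circ h^{-1}$ sends $\ux$ onto $N_{x_0}$, so $\pi_1^*\omega$ only involves the values of $\omega$ near $N_{x_0}$; if $\omega\equiv 0$ in a neighborhood of $N_{x_0}$ then $\pi_1^*\omega=0$, and (\ref{eq_K_homot_operator}) with $\nu=1$ becomes (\ref{eq_K_homot_operator_cpct_supp}). (One should note that $\K_1$ and the $\nu=1$ case of (\ref{eq_K_homot_operator}) are the boundary case $\nu\to 1$ of the family, which is harmless since the reparametrization argument above works verbatim for $\nu=1$.)

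The main obstacle I anticipate is the regularity bookkeeping at the degenerate end $t=0$ of $\rho_\nu$: one must be sure that $\rho_\nu$ is genuinely a subanalytic \emph{Lipschitz} homotopy on the closed cylinder $[0,1]\times\ux$ so that Proposition \ref{pro_homotopy} (which presupposes a Lipschitz homotopy) is applicable, and that the stratified-form machinery behind that proposition — Lemma \ref{lem_h_hor_C1} and Lemma \ref{lem_stratified_are_weakly_diff} — is valid here; this is exactly why it is essential that $\nu>0$, for otherwise $\rho_0=r$ reaches the singular vertex $x_0$ and the pullback $r^*\omega$ need not be $L^1_{loc}$ unless $\omega$ is controlled (which is the content of the separate, more delicate Proposition and Definition \ref{pro_dfn_K_0}). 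Everything else is a direct transcription of the smooth Poincaré-lemma computation.
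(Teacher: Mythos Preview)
Your derivation of (\ref{eq_K_homot_operator}) is essentially identical to the paper's: apply Proposition~\ref{pro_homotopy} to the homotopy $\rho_\nu$, invoke (\ref{eq_K_nu_2eme_forme}) to identify the resulting operator with $\K_\nu$, and compute the endpoints $(\rho_\nu)_1=\mathrm{id}$ and $(\rho_\nu)_0=\pi_\nu$. This part is fine.

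The gap is in your treatment of (\ref{eq_K_homot_operator_cpct_supp}). You write ``take $\nu=1$'' and then claim the argument ``works verbatim for $\nu=1$.'' It does not: for $\nu=1$ the map $\rho_1(0,\cdot)=\pi_1=h\circ P_1\circ h^{-1}$ sends $\ux$ onto $N_{x_0}=S(x_0,\ep)\cap M$, which lies in the \emph{boundary} of $\ux$, not in $\ux$ itself. Hence $\rho_1$ is not a homotopy $[0,1]\times\ux\to\ux$, and Proposition~\ref{pro_homotopy} cannot be invoked as stated. Correspondingly, the identity (\ref{eq_K_homot_operator}) was only established for $\nu\in(0,1)$, so plugging in $\nu=1$ is not legitimate without further work.

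The paper circumvents this cleanly: if $\omega$ vanishes in a neighborhood of $N_{x_0}$, then for every $\nu\in(0,1)$ sufficiently close to $1$ the image of $\pi_\nu$ lies in that neighborhood, so $\pi_\nu^*\omega=0$; and since $(h^*\omega)_{\partial_t}(s,y)=0$ for $s$ near $1$, one has $\hn_\nu\omega=\hn_1\omega$ and hence $\K_\nu\omega=\K_1\omega$ (and likewise $\K_\nu d\omega=\K_1 d\omega$) for such $\nu$. Now (\ref{eq_K_homot_operator}) with this $\nu<1$ reads $\db\K_1\omega+\K_1 d\omega=\omega$, which is (\ref{eq_K_homot_operator_cpct_supp}). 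This avoids ever leaving the open interval $(0,1)$ and is the fix you need.
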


\begin{proof}
Equality (\ref{eq_K_homot_operator}) follows from (\ref{eq_chain_homotopy}) and (\ref{eq_K_nu_2eme_forme}). The second statement follows from the fact that, if $\omega$ is equal to zero in the vicinity of $N_{x_0}$ then $\pi^*_\nu \omega$ vanishes and $\K_\nu\omega=\K_1\omega$,  for all $\nu$ close to $1$.
 \end{proof}

We wish to establish an analogous result  in the case $\nu=0$ for $p$ sufficiently large  (Proposition \ref{pro_proprietes_de_K_nu_Lp}). This is a bit more delicate since the forms are not defined at $x_0$. We shall need the following fact. 

 \begin{pro}\label{pro_Knu_phi_tend_vers_zero}
  For $p$ large enough we have for each $L^p$ $j$-form $\omega$ on $\ux$, $j\ge 1$, and each $\varphi\in \Lambda_{or} ^{m-j+1}(\ux)$
  $$\lim_{\nu \to 0} <\K_\nu\omega ,\varphi>=<\K_0\omega,\varphi>.$$ 
 \end{pro}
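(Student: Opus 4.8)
The natural strategy is to express both sides through the alternative formula of Proposition~\ref{pro_K_0} and then invoke dominated convergence. Recall from Proposition~\ref{pro_K_0} that for $\nu>0$ one has $\K_\nu\omega(x)=\int_0^1 (\rho_\nu^*\omega)_{\pa_t}(t,x)\,dt$, where $\rho_\nu(t,x)=\theta_\nu(t,h^{-1}(x))$ and $\theta_\nu(t,s,y)=h(ts+(1-t)\nu,y)$; and for $p$ large enough $\K_0\omega(x)=\int_0^1(r^*\omega)_{\pa_t}(t,x)\,dt$ with $r=\rho_0$. First I would pair with a fixed test form $\varphi\in\Lambda_{or}^{m-j+1}(\ux)$ and write, by Fubini,
$$<\K_\nu\omega,\varphi>=\int_{\ux}\Big(\int_0^1(\rho_\nu^*\omega)_{\pa_t}(t,x)\,dt\Big)\wedge\varphi(x)=\int_0^1\int_{\ux}(\rho_\nu^*\omega)_{\pa_t}(t,x)\wedge\varphi(x)\,dt,$$
and similarly for $\nu=0$ with $r$ in place of $\rho_\nu$. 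Since $\varphi$ has compact support in $\ux$ and $\theta_\nu(t,s,y)\to\theta_0(t,s,y)$ pointwise as $\nu\to 0$ with the first-order derivatives of $\theta_\nu$ converging as well (away from $x_0$, where the geometry is uniformly Lipschitz by Theorem~\ref{thm_local_conic_structure}), the integrand $(\rho_\nu^*\omega)_{\pa_t}(t,x)\wedge\varphi(x)$ converges pointwise a.e.\ on $(0,1)\times\ux$ to $(r^*\omega)_{\pa_t}(t,x)\wedge\varphi(x)$.

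It then remains to produce an integrable dominating function, uniform in $\nu\in(0,1)$, so that the Lebesgue dominated convergence theorem applies. Here I would use that $\rho_\nu(t,\cdot)$ is controlled by the retraction $r$: more precisely, $|d_x\rho_\nu(t,x)|\lesssim |d_x r_{\tau}(x)|$ for the appropriate parameter, and Theorem~\ref{thm_local_conic_structure}(i) gives $|d_x r_t|\lesssim t$ uniformly. Combined with $|d_t\rho_\nu|$ being comparable to $|\omega\circ h(\cdot,y)|\cdot|d h|$ times a bounded factor, the $\ep$-substitution/Jacobian estimate $s^k\le C\,\jac h(s,x)$ from \L ojasiewicz (as used in Proposition~\ref{pro_dfn_K_0}) shows that $|(\rho_\nu^*\omega)_{\pa_t}(t,x)|$ is bounded, for all $\nu$, by a fixed $L^1$ function of $(t,x)$ on $(0,1)\times\supp\varphi$ (this is exactly where $p$ large enters: the estimate in Proposition~\ref{pro_dfn_K_0} shows $(h^*\omega)_{\pa_t}$ is $L^1$, hence so is its push-forward along the uniformly-Lipschitz family). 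Since $\varphi$ is bounded with compact support, the product with $\varphi$ is dominated by an $L^1$ function independent of $\nu$, and dominated convergence yields $<\K_\nu\omega,\varphi>\to<\K_0\omega,\varphi>$.

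The main obstacle is the domination step near the vertex $x_0$: the homotopies $\rho_\nu$ all "sweep through'' a neighborhood of $x_0$, where $h$ degenerates, so one must check carefully that the bound on $|(\rho_\nu^*\omega)_{\pa_t}|$ obtained from the substitution $s=ut+(1-u)\nu$ does not blow up as $\nu\to 0$. The point is that the change of variables in Proposition~\ref{pro_K_0} shows $\hn_\nu\omega(t,y)=t\int_0^1(h^*\omega)_{\pa_t}(ut+(1-u)\nu,y)\,du$, and since $ut+(1-u)\nu\le t$ for $u\in[0,1]$, $\nu\le t$, the inner argument stays in the range where the $L^1$ bound on $(h^*\omega)_{\pa_t}$ from Proposition~\ref{pro_dfn_K_0} is available; integrating in $u$ and then in $(t,y)$, and using the uniform Lipschitz bound for $h^{-1}$ away from $x_0$ on $\supp\varphi$ to push forward, gives the required uniform $L^1$ majorant. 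Once this bookkeeping is done, the rest is a routine application of Fubini and dominated convergence.
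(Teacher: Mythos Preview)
Your instinct to use dominated convergence is exactly right, and this is what the paper does. However, you take an unnecessary detour through the $\rho_\nu$ representation of Proposition~\ref{pro_K_0}, which makes the domination step harder than it needs to be and leaves it only sketchily justified.

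The paper's argument is much shorter: since $\K_\nu=h^{-1*}\hn_\nu$ and $h^{-1}$ is bi-Lipschitz on the preimage of $\supp\varphi$ (which stays away from $x_0$), it suffices to show $\hn_\nu\omega\to\hn_0\omega$ in $L^1(\zx)$. Pointwise convergence is immediate from the definition $\hn_\nu\omega(t,y)=\int_\nu^t(h^*\omega)_{\pa_t}(s,y)\,ds$, and the dominating function is simply
\[
|\hn_\nu\omega(t,y)|\le\int_0^1|(h^*\omega)_{\pa_t}(s,y)|\,ds,
\]
which is independent of both $\nu$ and $t$, and is $L^1$ on $\zx$ by Proposition~\ref{pro_dfn_K_0}. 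That is the entire proof.

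Your route through $\rho_\nu$ forces you to control $(\rho_\nu^*\omega)_{\pa_t}$ on $(0,1)\times\ux$, and your domination argument there is not convincing as written: the phrase ``the inner argument stays in the range where the $L^1$ bound on $(h^*\omega)_{\pa_t}$ is available'' conflates an integral bound with a pointwise one. If you actually carry out the bookkeeping you propose (the substitution $s=ut+(1-u)\nu$), you recover precisely $\hn_\nu\omega(t,y)=\int_\nu^t(h^*\omega)_{\pa_t}(s,y)\,ds$, and the only clean way to dominate this uniformly in $\nu$ is the inequality displayed above. So your detour, once made rigorous, collapses back to the paper's one-line estimate; you may as well start there.
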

 \begin{proof}
Take $p$ large enough for  the conclusion of  Proposition \ref{pro_dfn_K_0} to hold and fix an  $L^p$ $j$-form $\omega$ on $\ux$, $j\ge 1$.
 Since $h$ is bi-Lipschitz on the preimage of the support of any compactly supported form $\varphi$, it suffices to establish that $\hn_\nu\omega$ tends to $\hn_0\omega$ for the $L^1$ norm.  Remark that $\hn_\nu\omega$ tends to $\hn_0\omega$ pointwise. As a matter of fact, since
$$|\hn_\nu \omega(t,x)|=|\int_\nu ^t (h^*\omega)_{\pa_t} (s,x)ds|\le \int_0 ^1 |(h^*\omega)_{\pa_t} (s,x)|ds $$
 which is $L^1$ on $\zx$  (and constant with respect to $t$), the result follows from Lebesgue's dominated convergence theorem.
 \end{proof}

\begin{pro}\label{pro_proprietes_de_K_nu_Lp}
For $p$ large enough and $j\ge 1$, we have  for all  $\omega \in \Omega_{p} ^j (\ux)$:
\begin{equation}\label{eq_K_homot_operator_Lp}\overline{d}\K_0\omega+\K_0 d\omega=\omega.\end{equation}
\end{pro}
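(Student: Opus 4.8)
The plan is to obtain \eqref{eq_K_homot_operator_Lp} by letting $\nu\to 0$ in the homotopy identity of Proposition~\ref{pro_proprietes_de_K_nu}, the whole difficulty being concentrated in showing that the ``boundary term'' $\pi_\nu^*\omega$ tends to $0$ weakly. First I would fix $p$ large enough for the conclusions of Propositions~\ref{pro_dfn_K_0} and \ref{pro_Knu_phi_tend_vers_zero} to hold for both $\omega$ and $d\omega$, and fix $\omega\in\Omega_{p}^{j}(\ux)$ with $j\ge 1$. Since $\ux$ is bounded subanalytic it has finite volume, so $\omega$ and $d\omega$ also lie in $\Omega_{1}^{j}(\ux)$ and $\Omega_{1}^{j+1}(\ux)$, and $\K_0\omega$, $\K_0 d\omega$ are well-defined $L^1_{loc}$ forms. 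By the definition of the weak exterior differential, and reducing by a partition of unity to test forms supported in orientable charts, \eqref{eq_K_homot_operator_Lp} is equivalent to
\begin{equation}\label{eq_plan_star}
\langle\K_0\omega,d\varphi\rangle=(-1)^{j}\bigl(\langle\omega,\varphi\rangle-\langle\K_0 d\omega,\varphi\rangle\bigr),\qquad \varphi\in\Lambda_{or}^{m-j}(\ux).
\end{equation}

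Next I would test the identity $\overline{d}\K_\nu\omega+\K_\nu d\omega=\omega-\pi_\nu^*\omega$ of Proposition~\ref{pro_proprietes_de_K_nu} against such a $\varphi$, which gives for every $\nu\in(0,1)$
\begin{equation}\label{eq_plan_nu}
\langle\K_\nu\omega,d\varphi\rangle=(-1)^{j}\bigl(\langle\omega,\varphi\rangle-\langle\pi_\nu^*\omega,\varphi\rangle-\langle\K_\nu d\omega,\varphi\rangle\bigr).
\end{equation}
By Proposition~\ref{pro_Knu_phi_tend_vers_zero}, applied to the $j$-form $\omega$ with the test form $d\varphi\in\Lambda_{or}^{m-j+1}(\ux)$, and to the $(j+1)$-form $d\omega$ with the test form $\varphi$, the left-hand side of \eqref{eq_plan_nu} and the term $\langle\K_\nu d\omega,\varphi\rangle$ converge as $\nu\to 0$ to $\langle\K_0\omega,d\varphi\rangle$ and $\langle\K_0 d\omega,\varphi\rangle$ respectively. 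Hence $\langle\pi_\nu^*\omega,\varphi\rangle$ converges as well, and \eqref{eq_plan_star} will follow once one proves
\begin{equation}\label{eq_plan_starstar}
\lim_{\nu\to 0}\langle\pi_\nu^*\omega,\varphi\rangle=0.
\end{equation}

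To establish \eqref{eq_plan_starstar} I would rewrite the pairing in a form adapted to $h$. Writing $\pi_\nu=j_\nu\circ p\circ h^{-1}$, where $p:(0,1)\times N_{x_0}\to N_{x_0}$ is the projection and $j_\nu:=h(\nu,\cdot):N_{x_0}\to\ux$, and using that $h$ is bi-Lipschitz over the compact set $\supp\varphi$ (it is bi-Lipschitz away from $x_0$, by Theorem~\ref{thm_local_conic_structure}), a change of variables through $h$ and a degree count (the components of $p^*(j_\nu^*\omega)\wedge h^*\varphi$ not containing $dt$ vanish, and $j_\nu^*\omega$ does not depend on $t$) give
$$\langle\pi_\nu^*\omega,\varphi\rangle=\pm\int_{N_{x_0}}(j_\nu^*\omega)\wedge\psi,\qquad \psi:=\int_0^1(h^*\varphi)_{\pa_t}(t,\cdot)\,dt,$$
where $\psi$ is a bounded, compactly supported $(m-j-1)$-form on $N_{x_0}$ \emph{not depending on $\nu$}. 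Since $H$ is Lipschitz and $j_\nu$ is $H$ composed with the homothety of ratio $\nu$ centered at $x_0$, the map $j_\nu$ is $C\nu$-Lipschitz, hence $|j_\nu^*\omega(y)|\le (C\nu)^{j}|\omega(j_\nu(y))|$ almost everywhere, so that
$$|\langle\pi_\nu^*\omega,\varphi\rangle|\le \|\psi\|_\infty\,(C\nu)^{j}\int_{y\in N_{x_0}}|\omega(j_\nu(y))|=\|\psi\|_\infty\,(C\nu)^{j}\,\Phi(\nu),\qquad \Phi(t):=\int_{y\in N_{x_0}}|\omega(h(t,y))|.$$
The proof of Proposition~\ref{pro_dfn_K_0} shows precisely that $\omega\circ h$ is $L^1$ on $(0,1)\times N_{x_0}$, i.e.\ $\Phi\in L^1(0,1)$; therefore, using $j\ge 1$,
$$\int_0^1\frac{|\langle\pi_\nu^*\omega,\varphi\rangle|}{\nu}\,d\nu\le \|\psi\|_\infty C^{j}\int_0^1\nu^{\,j-1}\Phi(\nu)\,d\nu\le \|\psi\|_\infty C^{j}\int_0^1\Phi(\nu)\,d\nu<\infty.$$
This forces $\liminf_{\nu\to 0}|\langle\pi_\nu^*\omega,\varphi\rangle|=0$, and since $\lim_{\nu\to0}\langle\pi_\nu^*\omega,\varphi\rangle$ was shown to exist, this limit must be $0$, which is \eqref{eq_plan_starstar}.

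The steps involving weak differentiation, the reduction to orientable charts, and the change of variables through the Lipschitz map $h$ are routine (the last being exactly what Section~\ref{sect_lip_lp} provides). The one genuinely delicate step is \eqref{eq_plan_starstar}: it is here that one needs both the \emph{optimal} Lipschitz rate of $j_\nu$ (a consequence of the Lipschitz conic structure, Theorem~\ref{thm_local_conic_structure}) and the integrability $\omega\circ h\in L^1$, which holds---through \L ojasiewicz's inequality---only for $p$ large. In the write-up, the point requiring most care is the identity $\langle\pi_\nu^*\omega,\varphi\rangle=\pm\int_{N_{x_0}}(j_\nu^*\omega)\wedge\psi$: one must check that every form occurring is $L^1$ on the relevant set, so that the change of variables and Fubini are legitimate, and keep track of the orientations.
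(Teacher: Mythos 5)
Your proof is correct and follows the paper's overall strategy: test the homotopy identity of Proposition~\ref{pro_proprietes_de_K_nu} against $\varphi$, pass to the limit using Proposition~\ref{pro_Knu_phi_tend_vers_zero}, and reduce the matter to showing $\langle\pi_\nu^*\omega,\varphi\rangle\to 0$. Where you diverge, and in fact mildly simplify, is in the execution of that last estimate. The paper introduces the $L^p$-type quantity $\theta(\nu)=\int_{N_{x_0}}|\omega(h_\nu(z))|^p$, derives a \emph{new} \L ojasiewicz estimate $\nu^k\lesssim\jac h_\nu$, invokes H\"older's inequality, and extracts a sequence $\nu_i$ along which $\nu_i^p\theta(\nu_i)\to 0$. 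You instead pull the pairing back through $h$ to get $\pm\int_{N_{x_0}}j_\nu^*\omega\wedge\psi$ with a $\nu$-independent bounded compactly supported $\psi$, bound this by $(C\nu)^j\Phi(\nu)$ with $\Phi(\nu)=\int_{N_{x_0}}|\omega\circ h_\nu|$, and simply observe that $\Phi\in L^1(0,1)$ was already established in the proof of Proposition~\ref{pro_dfn_K_0} (Fubini applied to $\omega\circ h\in L^1$). Combined with $j\ge 1$, this gives $\nu^{-1}|\langle\pi_\nu^*\omega,\varphi\rangle|\in L^1(0,1)$, hence $\liminf=0$, and then the actual limit is $0$ because the limit exists by Proposition~\ref{pro_Knu_phi_tend_vers_zero}. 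This avoids the auxiliary Jacobian bound for $h_\nu$, H\"older's inequality, and the explicit subsequence extraction, while relying on the same two decisive ingredients the paper uses: the $C\nu$-Lipschitz rate of the retraction supplied by Theorem~\ref{thm_local_conic_structure}, and the hypothesis $j\ge 1$. Your computation identifying the paired quantity with $\pm\int_{N_{x_0}}j_\nu^*\omega\wedge\psi$ is legitimate since $h$ is bi-Lipschitz above $\supp\varphi$ and the degree count leaves only the $dt$-component of $h^*\varphi$.
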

\begin{proof}
For $\nu \in (0,1)$, define a mapping $h_\nu:N_{x_0}\to N_{x_0}$ by $h_\nu(x):=h(\nu,x)$.  If $\nu$ remains bounded below away from zero then so does the function $(\nu,x)\mapsto\jac h_\nu(x)$. Consequently,
by \L ojasiewicz's inequality (see \ref{eq_loj}), there exists a  rational number $k$ such that (almost everywhere) on $ (0,1)\times N_{x_0}$ we have:
 \begin{equation}\label{eq_jac_h_nu}
 \nu^k \lesssim  \jac h_\nu(x).
 \end{equation}
 Fix $p\ge k+1$ sufficiently large  for the conclusion of Proposition \ref{pro_Knu_phi_tend_vers_zero} to hold and take a differential form   $\omega \in \Omega_{p} ^j (\ux)$.  We have to prove that for all $\varphi \in \Lambda_{or}^{m-j}(\ux)$:
 \begin{equation}\label{eq_proof_K_0_ws}
 <\K_0\omega,d\varphi>=<\omega,\varphi> - < \K_0 d\omega,\varphi>.
 \end{equation}

  Fix such a differential form $\varphi$. As $\omega$ and $\db \omega$ are  $L^1$,      by Proposition \ref{pro_proprietes_de_K_nu}, we know that  for all $\nu \in (0,1)$
 $$<\K_\nu\omega,d\varphi>=<\omega,\varphi>-<\pi_\nu ^*\omega,\varphi> - < \K_\nu d\omega,\varphi>.$$
  Moreover, applying Proposition \ref{pro_Knu_phi_tend_vers_zero}  to both $\omega$ and $d \omega$ we see that 
$$\lim_{\nu \to 0} <\K_\nu \omega,\varphi>=<\K_0 \omega,\varphi> \;\mbox{ and }\; \lim_{\nu \to 0} <\K_\nu d\omega,\varphi>=<\K_0d \omega,\varphi>.$$
As a matter of fact, (\ref{eq_proof_K_0_ws}) reduces to show that there is a sequence $\nu_i$ tending to zero such that 
 \begin{equation}\label{eq_pi_nu_tend_vers_zero}
  \lim_{i \to +\infty} <\pi_{\nu_i}^*\omega,\varphi>=0.
 \end{equation}
 For simplicity,   set
 $$\theta(\nu):=\int_{z\in N_{x_0}} |\omega(h_\nu(z))|^p .$$
 Observe first that by definition of $\theta$ we have
 $$\int_0 ^1  \nu^k \theta(\nu)d\nu \overset{(\ref{eq_jac_h_nu})}{\lesssim} \int_0 ^1 \int_{z\in N_{x_0}} |\omega(h_\nu(z))|^p \jac h_\nu(z) d\nu =|\omega|_p <\infty, $$
 which means that the function $\nu^k\theta(\nu)$
  belongs to $L^1((0,1))$. Since $p\ge k+1$, this implies that there exists a sequence of positive numbers $\nu_i$ tending to zero such that \begin{equation}\label{eq_nu_i}
                                                                                                                     \lim_{\nu\to 0} \nu_i^{p}\theta(\nu_i) =0
                                                                                                                                           \end{equation}
 (for if we had $\nu^k \theta(\nu) \ge  \frac{\eta}{\nu}$, for some $\eta>0$ and all $\nu>0$ small, then $\nu^k \theta(\nu)$ could not be $L^1$). Denote by $K$ the support of $\varphi$. We claim that \begin{equation}\label{eq_claim_omega_nu}
\lim_{i \to \infty} |\pi_{\nu_i}^*\omega_{|K}|_1=0.                                                                                                                                                                                    \end{equation}
Proving this claim will yield (\ref{eq_pi_nu_tend_vers_zero}).

Since $K$ is compact, there is a positive real number $s$ such that $K\subset h([s,1)\times N_{x_0})$.
By definition of $r$, for every $\nu \in (0,1)$ and $x\in K$ we have $\pi_\nu(x)=r_\mu(x)$, where $\mu=\frac{\nu}{|x-x_0|}$. Thanks to (\ref{item_H_bi}) of Theorem \ref{thm_local_conic_structure}, we deduce that $\pi_\nu$ is $C\nu$-Lipschitz on $K$ for some constant $C$ independent of $\nu\in (0,1)$. Hence, for $x\in K$ and $\nu \in (0,1) $ we have (since $j\ge 1$)
 \begin{equation}\label{eq_pi_nu_pullback}
   |\pi_\nu ^* \omega(x)|\lesssim  \nu^j|\omega(\pi_\nu(x))|\le \nu |\omega(\pi_\nu(x))| .
  \end{equation}

  We thus get for $\nu \in (0,s)$:
 $$|\pi^*_\nu \omega_{|K}|_1 \lesssim \int_{x\in K}\nu |\omega (\pi_\nu(x))|\le  \bp \int_{x\in K} \nu^p |\omega(\pi_\nu(x))|^p  \bq^{\frac{1}{p}} \hn^m (K)^{\frac{1}{q}},     $$ 
  by H\"older's inequality. Making the substitution $y=h^{-1}(x)$ in the last integral, this entails (since $h_\nu=\pi_\nu\circ h$) 
$$ |\pi^*_\nu \omega_{|K}|_1\lesssim \Big{(}\int_{y\in h^{-1}(K)}\nu^p |\omega (h_\nu(y))|^p\jac h(y)\bq^{\frac{1}{p}}\lesssim  \Big{(}\int_{y\in h^{-1}(K)}\nu^p |\omega (h_\nu(y))|^p\bq^{\frac{1}{p}}.$$

  We therefore can conclude that for $i \in \N$
$$ |\pi^*_{\nu_i} \omega_{|K}|_1 \lesssim  \Big{(}\int_s^1 \int_{z \in N_{x_0}} \nu_i^p|\omega (h_{\nu_i}(z))|^p\,dt\bq^{\frac{1}{p}}  =\bp(1-s) \nu_i^p\theta(\nu_i)\bq^{\jpp},$$
which tends to zero (by choice of the sequence $\nu_i$, see (\ref{eq_nu_i})).
This  establishes (\ref{eq_claim_omega_nu}),  which yields in turn (\ref{eq_pi_nu_tend_vers_zero}).
\end{proof}
\end{subsection}

\begin{subsection}{$L^p$ bounds}\label{sect_lp_bounds}

\begin{pro}\label{pro_K_0_borne_pr_L_p}
There is a constant $C$ such that for any large enough $p$  we have for each $L^p$ $j$-form $\omega$, $j\ge 1$, on $\ux$:
\begin{equation}\label{eq_K_0_borne_pr_Lp}
|\K_0 \omega|_p\le C |\omega|_p .\end{equation}
\end{pro}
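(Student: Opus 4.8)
The plan is to use the formula (\ref{eq_K_0_2eme_forme}) from Proposition \ref{pro_K_0}, which expresses $\K_0\omega$ via the Lipschitz retraction $r$: for $p$ large enough and $x\in\ux$,
$$\K_0\omega(x)=\int_0^1 (r^*\omega)_{\pa_t}(t,x)\,dt.$$
The key geometric input is that $r_t:\ux\to\ux$ is $Ct$-Lipschitz for all $t\in(0,1]$ (Theorem \ref{thm_local_conic_structure}(\ref{item_H_bi})), together with the Jacobian estimate for $r_t$ coming from \L ojasiewicz's inequality. First I would estimate the integrand pointwise: since the derivative $d_x r_t$ has norm $\lesssim t$ and $\omega$ is a $j$-form with $j\ge 1$, one gets $|(r^*\omega)_{\pa_t}(t,x)|\lesssim |\pa_t r(t,x)|\cdot t^{j-1}|\omega(r_t(x))|\lesssim |\omega(r_t(x))|$, using that $r$ is globally Lipschitz (so $|\pa_t r|$ is bounded). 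Hence $|\K_0\omega(x)|\lesssim \int_0^1 |\omega(r_t(x))|\,dt$.

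Next I would raise to the $p$-th power and integrate over $\ux$. Applying Minkowski's integral inequality (in the form used in the proof of Theorem \ref{thm_regularization_operators}, i.e.\ $\big|\int_0^1 g_t\,dt\big|_p\le\int_0^1 |g_t|_p\,dt$) gives
$$|\K_0\omega|_p\lesssim \int_0^1 \Big(\int_{x\in\ux}|\omega(r_t(x))|^p\Big)^{1/p}dt.$$
Now for each fixed $t\in(0,1]$, make the substitution $u=r_t(x)$. Since $r_t$ is bi-Lipschitz (Theorem \ref{thm_local_conic_structure}(\ref{item_r_bi})), the change of variables is legitimate, and
$$\int_{x\in\ux}|\omega(r_t(x))|^p = \int_{u\in r_t(\ux)}|\omega(u)|^p\,\jac r_t^{-1}(u)\,du\le \|\jac r_t^{-1}\|_\infty\,|\omega|_p^p.$$
The point is to control $\|\jac r_t^{-1}\|_\infty$ uniformly enough in $t$. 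Because $r_t$ is $Ct$-Lipschitz, $r_t^{-1}$ has Jacobian bounded below, equivalently $\jac r_t\gtrsim t^m$ where $m=\dim M$ — this is where (\ref{eq_r_s dist to xo}) and the \L ojasiewicz argument (as in (\ref{eq_jac_h_s}), (\ref{eq_jac_h_nu})) enter: there is $N\in\N$ with $\jac r_t(x)\gtrsim t^N$ on $(0,1]\times\ux$, so $\jac r_t^{-1}(u)\lesssim t^{-N}$. Substituting,
$$|\K_0\omega|_p\lesssim \int_0^1 t^{-N/p}\,dt\cdot|\omega|_p,$$
and $\int_0^1 t^{-N/p}\,dt<\infty$ provided $p>N$. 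For such $p$ the integral is moreover bounded by a constant independent of $p$ (it is at most $\frac{1}{1-N/p}\le$ something like $2$ once $p\ge 2N$, say), so the resulting constant $C$ can be taken independent of $p$, as claimed. The case $p=\infty$ is easier and handled directly: $|\K_0\omega(x)|\lesssim\int_0^1|\omega(r_t(x))|\,dt\le|\omega|_\infty$.

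The main obstacle I expect is making the exponents match so that the final constant is genuinely independent of $p$: one must be careful that the Lipschitz constants of $r_t$ and $r_t^{-1}$, the bound on $|\pa_t r|$, and the \L ojasiewicz exponent $N$ are all fixed (depending only on the germ of $X$ at $x_0$, via Theorem \ref{thm_local_conic_structure}), and only the final integral $\int_0^1 t^{-N/p}\,dt$ depends on $p$ — and that integral is uniformly bounded for $p$ bounded away from $N$ from above. A secondary technical point is justifying the change of variables $u=r_t(x)$ for the (possibly non-smooth but Lipschitz, hence a.e.\ differentiable) map $r_t$; this is standard via the area formula for Lipschitz maps, and the bi-Lipschitz property from Theorem \ref{thm_local_conic_structure}(\ref{item_r_bi}) guarantees $r_t$ is injective with the expected Jacobian behaviour.
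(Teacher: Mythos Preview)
Your proposal is correct and follows essentially the same route as the paper's proof: the pointwise bound via (\ref{eq_K_0_2eme_forme}), Minkowski's inequality, the change of variables $u=r_t(x)$, and the \L ojasiewicz estimate $\jac r_t\gtrsim t^N$ (the paper's (\ref{eq_jac_r_s})) leading to $|\K_0\omega|_p\lesssim|\omega|_p\int_0^1 t^{-N/p}\,dt$. One minor slip to clean up: the $Ct$-Lipschitz property of $r_t$ by itself yields only the \emph{upper} bound $\jac r_t\lesssim t^m$ (hence $\jac r_t^{-1}\gtrsim t^{-m}$, the wrong direction), so the lower bound $\jac r_t\gtrsim t^N$ really does come from \L ojasiewicz alone, exactly as you then say and as the paper does.
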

\begin{proof}
%
Since $r_s$ is bi-Lipschitz for each $s>0$ (see section  \ref{sect_r_s} for $r_s$),  the function $(s,x)\mapsto \jac r_s(x)$ (defined on a subanalytic dense subset of $[0,1]\times \ux$) can only tend to zero when $s$ goes to zero. Consequently, by \L ojasiewicz's inequality (see Proposition \ref{pro_lojasiewicz_inequality}), there is a positive integer  $k$ and a constant $C$ such that  for almost all $(s,x)\in (0,1)\times \ux$  \begin{equation}\label{eq_jac_r_s}  s^k\le C \;\jac r_s(x) .  \end{equation}
 We shall establish (\ref{eq_K_0_borne_pr_Lp}) for all $p\in (k+1,\infty]$. 
 
 Let $p$ be a real number greater than $(k+1)$ (we postpone the case $p=\infty$) and let  $\omega$ be an $L^p$ $j$-form on $\ux$, $j\ge 1$.   We shall estimate $|\K_0 \omega|_p$  using (\ref{eq_K_0_2eme_forme}). 
 For this purpose, we first estimate the $L^p$ norm of $\omega\circ r_s$.  Indeed, setting $y=r_s(x)$, we see that
                  \begin{equation}\label{eq_estimate_omega_circ_r_s1}  |\omega\circ r_s|_p=\big{(}\int_{x\in \ux} |\omega(r_s(x))|^p \; \big{)}^{\frac{1}{p}} = \big{(}\int_{y\in r_s(\ux)} |\omega(y)|^p \cdot \jac r_s^{-1}(y)\; \big{)}^{\frac{1}{p}},\end{equation} 
which, by (\ref{eq_jac_r_s}), yields that
 \begin{equation}\label{eq_estimate_omega_circ_r_s2}
|\omega\circ r_s|_p   \le C^{\frac{1}{p}} \cdot s^{\frac{-k}{p}}\cdot|\omega|_p.
                   \end{equation}

Now, as $r_s^*$ has bounded derivative (by a constant independent of $s$) we have $|r_s^*\omega (x)|\le C'|\omega(r_s(x))|$, for some constant $C'$ independent of $x$ and $s$.
By (\ref{eq_K_0_2eme_forme}), we deduce \begin{equation}\label{eq_norme_K_0}|\K_0\omega(x)|\le C'\int_0^1 |\omega(r_s(x))|ds,\end{equation} which, thanks to Minkowski's inequality, entails that $$|\K_0\omega|_p\le C'\int_0^1 |\omega\circ r_s|_pds \overset{(\ref{eq_estimate_omega_circ_r_s2})}{\le} C^{\frac{1}{p}} C' |\omega|_p\int_0^1  s^{-\frac{k}{p}}ds ,$$
 showing that $\K_0$ is bounded for the $L^p$ norm independently of $p$ (since $p> k+1$).
 
 In the case $p=\infty$, it immediately follows from (\ref{eq_norme_K_0}) that $$|\K_0\omega|_\infty \le C'\int_0^1|\omega|_\infty\, ds= C'|\omega|_\infty,$$ for each  $L^\infty$ $j$-form $\omega$ on $\ux$. Hence, the result is clear in the case $p=\infty$ as well. \end{proof}

%

\begin{pro}\label{pro_K_1_borne_pr_L_p}
There is a constant $C$ such that for any  $p$ sufficiently close to $1$ we have for each $L^p$ form $\omega$ on $\ux$:
\begin{equation}\label{eq_K_1_borne_pr_Lp}
|\K_1 \omega|_p\le C |\omega|_p .\end{equation}
%
\end{pro}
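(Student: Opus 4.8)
The plan is to mimic the proof of Proposition~\ref{pro_K_0_borne_pr_L_p}, but working from the representation $\K_1=h^{-1*}\circ\hn_1$, with $\hn_1\omega(t,y)=\int_1^t(h^*\omega)_{\pa_t}(s,y)\,ds$, rather than from formula~\eqref{eq_K_nu_2eme_forme}. The reason is that the homotopy $\rho_1$ moves points \emph{away} from $x_0$ and \emph{expands} the ``angular'' directions near $x_0$, so it is not Lipschitz with a uniform constant, and the naive Minkowski--plus--change-of-variables argument used for $\K_0$ does not apply directly. First I would record the pointwise estimate on $\zx=(0,1)\times N_{x_0}$: since $h$ is Lipschitz and $h(s,\cdot)=H(s\,\cdot)$ with $H$ Lipschitz, the differential of $h(s,\cdot)$ has operator norm $\lesssim s$, so contracting $h^*\omega$ with $\pa_t$ (which removes the radial direction) gives $|(h^*\omega)_{\pa_t}(s,y)|\lesssim s^{j-1}|\omega(h(s,y))|$ for a $j$-form $\omega$; hence, for $x=h(\tau,y)$,
$$|\hn_1\omega(h^{-1}(x))|\le\int_\tau^1|(h^*\omega)_{\pa_t}(s,y)|\,ds\lesssim\int_\tau^1 s^{j-1}|\omega(h(s,y))|\,ds,$$
a quantity independent of $t$.

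Next I would transport the estimate to $\ux$. Applying $h^{-1*}$ introduces a factor $|\Lambda^{j-1}d_x h^{-1}|$, which is unbounded near $x_0$; the point is that this expansion of angular directions by $h^{-1}$ at radius $\tau$ is, direction by direction, exactly compensated by the contraction of the same directions by $h$ at radius $s$, so that the net effect of transporting an angular $(j-1)$-vector from the sphere of radius $\tau\ep$ to that of radius $s\ep$ is a stretching by a power of $s/\tau$. The precise power-type asymptotics are supplied, on the pieces of a suitable cell decomposition of $N_{x_0}$, by \L ojasiewicz's inequality (used exactly as in~\eqref{eq_jac_h_s}) together with the normal form of Lemma~\ref{prop proj reg}(iii), which also yields $\jac h(\tau,y)\sim\tau^{k}a(y)$; writing $E$ for the sum of the $j-1$ largest angular exponents of $h$, one has $E\le k$. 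Plugging these in, applying H\"older's inequality to the inner integral and Fubini to exchange the $s$- and $\tau$-integrations, $|\K_1\omega|_p$ is bounded by a constant times
$$\int_{N_{x_0}}a(y)\int_0^1|\omega(h(s,y))|^p\Bigl(s^{Ep}\!\int_0^s\tau^{\,k-Ep}\,d\tau\Bigr)ds\,dy.$$
The inner $\tau$-integral converges precisely when $Ep<k+1$, in which case it equals a constant multiple of $s^{\,k-Ep+1}$, so the bracket is $\lesssim s^{\,k}\sim\jac h(s,y)$ on $(0,1)$ and the whole expression is $\lesssim|\omega|_p$. Since $E\le k$ the threshold $\frac{k+1}{E}$ exceeds $1$, which is exactly why the estimate holds for all $p$ in a fixed neighbourhood of $1$ (and, for large $p$, fails because of the surviving factor $s^{\,k+1-Ep}$).

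I expect the main obstacle to be precisely the non-smoothness of $h$, hence of $h^{-1}$, near $x_0$: the crude bound $|\Lambda^{j-1}d_x h^{-1}|\lesssim 1/\jac h$ is far too lossy here and would only give boundedness for $p$ large, so one must set up the exact matching between the contraction of angular directions by $h$ (of size $\sim s$ at radius $s$) and their expansion by $h^{-1}$ (of size $\sim\tau^{-1}$ at radius $\tau$), and retain the genuine power-type behaviour of $\jac h$ on both sides of the comparison rather than estimating it away. Once this bookkeeping is in place the remaining argument is the one-variable integral estimate above. The case $j=1$ is trivial, since then no $d_x h^{-1}$ factor occurs and one directly gets $\int_0^s\jac h(\tau,y)\,d\tau\lesssim\jac h(s,y)$ for all $p\ge1$, so only $j\ge2$ is substantive.
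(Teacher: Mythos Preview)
Your approach is genuinely different from the paper's and considerably more laborious. The paper does not estimate $\K_1$ directly at all: it observes that $\K_1$ is the \emph{adjoint} of $\K_0$. A short Fubini computation (unwinding the definitions of $\hn_0$ and $\hn_1$ on $\zx=(0,1)\times N_{x_0}$) gives
\[
\langle \K_1\omega,\varphi\rangle=\langle\omega,\K_0\varphi\rangle
\]
for every $\varphi\in\Lambda_0^{m-j+1}(\ux)$. Since Proposition~\ref{pro_K_0_borne_pr_L_p} already yields $|\K_0\varphi|_q\le C|\varphi|_q$ for all $q$ large, H\"older's inequality immediately gives $|\langle\K_1\omega,\varphi\rangle|\le C|\omega|_p|\varphi|_q$ for $p$ the conjugate exponent, hence $|\K_1\omega|_p\le C|\omega|_p$. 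That is the entire proof.

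Your direct route, by contrast, relies on several assertions that are not available in the paper and would need independent justification. First, you claim $\jac h(\tau,y)\sim\tau^k a(y)$ as a \emph{two-sided} equivalence; \L ojasiewicz's inequality (as used in~\eqref{eq_jac_h_s}) only supplies the lower bound $\tau^k\lesssim\jac h$, and Lemma~\ref{prop proj reg}(iii) concerns functions on $\C_n(R)$ prepared \emph{before} the bi-Lipschitz change $\Phi$, not the Jacobian of the trivialisation $h$ itself. Second, the existence of well-defined ``angular exponents'' for $d_xh^{-1}$, the identity $|\Lambda^{j-1}d_xh^{-1}|\sim\tau^{-E}$, and the inequality $E\le k$ are all plausible in the metrically conical model (where $E=j-1$ and $k=m-1$) but require a substantial stratified analysis of $dh$ in the general subanalytic case; nothing in Theorem~\ref{thm_local_conic_structure} or Lemma~\ref{prop proj reg} hands you this directly. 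Third, the ``exact matching'' you flag as the main obstacle is indeed the crux, and your sketch does not resolve it: the expansion of angular directions by $h^{-1}$ at radius $\tau$ and the contraction by $h$ at radius $s$ need not be governed by the \emph{same} exponent on a general subanalytic germ, and making this precise cell-by-cell is exactly the work you would still have to do. The duality argument sidesteps all of this by transferring the entire burden to the already-proven $L^q$ bound for $\K_0$.
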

\begin{proof}
%
As we can cover $M$ by finitely many orientable manifolds and use a partition of unity, we may assume that $\ux$ is oriented.  Take $\varphi \in \Lambda_0 ^{m-j+1}(\ux)$ as well as an $L^1$ $j$-form $\omega$ on $\ux$, and
observe that we have (for the relevant orientation on $N_{x_0}$)
\begin{eqnarray*}
 <\omega,\K_0\varphi>&=&\int_{\ux} \omega  \wedge\K_0 \varphi  \\&=&\int_{(0,1)\times N_{x_0}} h^*\omega\wedge  \hn_0 \varphi \qquad(\mbox{pulling back via $h$})\\
 &=&  \int_{x\in N_{x_0}} \int_0^1(h^*\omega)_{\pa_t}(t,x)\wedge   \hn_0 \varphi(t,x)dt  \\
 &=& \int_{x\in N_{x_0}} \int_0^1 \left(\int_0 ^t  (h^*\omega)_{\pa_t}(x,t)\wedge (h^*\varphi)_{\pa_s}(s,x)ds\right)dt\quad\mbox{(by (\ref{eq_h_0})})\\
  &=& \int_{x\in N_{x_0}} \int_{0<s\le t<1}  (h^*\omega)_{\pa_t}(t,x)\wedge(h^*\varphi)_{\pa_s}(s,x)ds\,dt.
\end{eqnarray*}
Making the same computation for $<\K_1 \omega ,\varphi>$ and applying Fubini's Theorem, we see that $$ <\omega,\K_0\varphi>=<\K_1\omega,\varphi>.$$ 

 Let now $q$ be a real number sufficiently big for the conclusion of Proposition \ref{pro_K_0_borne_pr_L_p} to hold (for every $L^q$ form) and denote by $p$ its H\"older conjugate  (which is close to $1$). By the above, for every $L^p$ $j$-form $\omega $ on $\ux$ and each  $\varphi \in \Lambda_0 ^{m-j+1}(\ux)$ we have:
$$|< \K_1 \omega,\varphi>|= <\omega,\K_0\varphi>\le |\K_0 \varphi|_q \cdot |\omega|_p \overset{(\ref{eq_K_0_borne_pr_Lp})}{\le}C |\varphi|_q  \cdot|\omega|_p, $$
which yields  (\ref{eq_K_1_borne_pr_Lp}).
\end{proof}
\end{subsection}
\end{section}

\begin{section}{Proof of the de Rham theorems}\label{sect_proof_de_rham_lp}
 Throughout this section, the letter  $M$ will stand for a bounded subanalytic submanifold of $\R^n$ and $X$ for its closure.

\begin{subsection}{The sheaves.}
 We will conclude by means of a sheaf theoretic argument. The problem is that $\Omega^j _{p}$ is not a sheaf on $M$. We thus shall work with the sheaf on $X$ of locally $L^p$ forms which has the same global sections (recall that $M$ is not compact).
 
 For $p\in [1,\infty)$ and $U\subset X$ open, let  $\f^j_{p}(U)$ be the $\R$-vector space of the $C^\infty$ $j$-forms $\omega$ on $U\cap M$ for which $\omega$ and $d\omega$ are both locally $L^p$ (locally in $U$, not in $U\cap M$),  i.e., those that satisfy for every $x_0\in U$ and  $\ep>0$ small enough 
$\int_{B(x_0,\ep)\cap M}|\omega|^p+|d\omega|^p<\infty$ (if $p<\infty$) 
or (in the case $p=\infty$)
                                        $\sup_{x\in B(x_0,\ep)\cap M} |\omega(x)|+|d\omega(x)|<\infty.$

 Clearly, $(\f^j_{p})_{j\in \N}$ is a complex of sheaves on $X$ for every $p\in [1,\infty]$. Observe that as $X$ is compact, locally $L^p$ is equivalent to $L^p$, which entails that $\F^j_{p}(X)=\op^j(M)$ for all $p \in [1,\infty]$. 
 Note also that all these sheaves are soft and therefore acyclic. 

 Given an open subset $U$ of $X$, we will denote by $\f^j_{p,c}(U)$ the sections of $\f^j_{p}(U)$ that are compactly supported.

Here, it is worthwhile stressing the fact that for $x_0\in \delta M$ the elements of $\f^j_{p,c}(B(x_0,\ep)\cap X)$ are forms on $B(x_0,\ep)\cap M $ which do not need  to be zero near the points of $\delta M$. 
Such forms just have to be zero near $S(x_0,\ep)$. In particular, they are not necessarily compactly supported as forms on $ B(x_0,\ep)\cap M$.
 
 Similarly, given an open subset $U$ of $X$, we will write $\fpb^j_p(U)$ for the space of weakly differentiable locally $L^p$ $j$-forms on $U\cap M$ that have an $L^p$ weak exterior differential, and $\fpb_{p,c}^j$ for the compactly supported sections of this sheaf.  Observe that the elements of  $\f^j_{p,c}(U)$ and $\fpb_{p,c}^j$ are $L^p$ forms on $U\cap M$. We have:
 \begin{lem}\label{lem_isom_compactly_supp_sections}
For all $p \in [1,\infty]$ and every open subset $U$ of $X$, the inclusions $\F^j_{p,c}(U)\hookrightarrow \fpb^{j}_{p,c}(U)$, $j\in \N$, induce isomorphisms in cohomology.
%
\end{lem}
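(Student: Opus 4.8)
The plan is to reproduce the argument of Corollary \ref{cor_ws_isom_smooth}, using the regularizing operators $R_i$, $A_i$ of Theorem \ref{thm_regularization_operators}, the only extra difficulty being that the sections involved must have compact support in $U$, which is merely open in $X=cl(M)$. First I would handle extension by zero. A section $\omega$ of $\fpb^j_{p,c}(U)$ (resp. $\F^j_{p,c}(U)$) has support a compact subset $K$ of $U$; since $X$ is compact, $K$ is disjoint from a neighborhood of $X\setminus U$, so $\omega$ glues with the zero form on $M\setminus K$ into a form $\tilde\omega$ on the whole of $M$. One checks that $\tilde\omega$ is (weakly) differentiable, that $\db\tilde\omega$ is the extension by zero of $\db\omega$, and --- because $\supp\omega$ and $X$ are compact --- that $\tilde\omega$ and $\db\tilde\omega$ are genuinely $L^p$; thus extension by zero yields maps $\fpb^j_{p,c}(U)\to\wsp^j(M)$ and $\F^j_{p,c}(U)\to\op^j(M)$ commuting with $\db$ and with the inclusions $\F\hookrightarrow\fpb$ and $\op\hookrightarrow\wsp$. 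Moreover, by property (\ref{item_support}) of Theorem \ref{thm_regularization_operators} (and (\ref{eq_support})), once $i$ is large enough the forms $R_i\tilde\omega$ and $A_i\tilde\omega$ are supported in any prescribed neighborhood of $K$, hence inside $U$, so their restrictions to $U\cap M$ are again compactly supported sections over $U$.

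Let $\Lambda^j\colon H^j\big(\F^\bullet_{p,c}(U)\big)\to H^j\big(\fpb^\bullet_{p,c}(U)\big)$ be the morphism induced by the inclusion. For surjectivity, given a cocycle $\omega\in\fpb^j_{p,c}(U)$ I would take its extension $\tilde\omega\in\wsp^j(M)$ (so $\db\tilde\omega=0$), choose $i$ large as above, and set $\eta:=(R_i\tilde\omega)_{|U\cap M}$; by (\ref{eq_rd}) one has $d\eta=R_i\db\tilde\omega=0$, by (\ref{item_bd_lp}) of Theorem \ref{thm_regularization_operators} the form $\eta$ is smooth and $L^p$, so $\eta\in\F^j_{p,c}(U)$ is a cocycle, and by (\ref{eq_ra}) it differs from $\omega$ by $\db\big((A_i\tilde\omega)_{|U\cap M}\big)$ with $(A_i\tilde\omega)_{|U\cap M}\in\fpb^{j-1}_{p,c}(U)$ (its weak differential being $R_i\tilde\omega-\tilde\omega$, which is $L^p$). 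Hence $\Lambda^j[\eta]=[\omega]$. For injectivity, suppose $\eta\in\F^j_{p,c}(U)$ is a cocycle with $\eta=\db\beta$, $\beta\in\fpb^{j-1}_{p,c}(U)$; extending, I get $\tilde\eta\in\op^j(M)$ with $\db\tilde\eta=0$ and $\tilde\beta\in\wsp^{j-1}(M)$ with $\db\tilde\beta=\tilde\eta$. Then (\ref{eq_rd}) gives $dR_i\tilde\beta=R_i\tilde\eta$, while (\ref{eq_ra}) applied to $\tilde\eta$ gives $\db A_i\tilde\eta=R_i\tilde\eta-\tilde\eta$, so that $\tilde\eta=d\big(R_i\tilde\beta-A_i\tilde\eta\big)$. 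For $i$ large, $\gamma:=\big(R_i\tilde\beta-A_i\tilde\eta\big)_{|U\cap M}$ is a smooth $L^p$ form with compact support in $U$ satisfying $d\gamma=\eta$, i.e. $\gamma\in\F^{j-1}_{p,c}(U)$, whence $[\eta]=0$.

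The point that needs care --- and the reason this is not an immediate corollary of Corollary \ref{cor_ws_isom_smooth} --- is the bookkeeping with supports: there a single fixed operator $R_i$ already provided the inverse of the comparison morphism, whereas here the index $i$ has to grow with the cocycle at hand, precisely so that property (\ref{item_support}) of Theorem \ref{thm_regularization_operators} keeps the regularized forms, and the chain-homotopy corrections, supported inside $U$. The only other routine but slightly delicate verification is that extension by zero does land in $\wsp^j(M)$, resp. $\op^j(M)$, and commutes with $\db$, since the support of the given section may well meet $\delta M$.
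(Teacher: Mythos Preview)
Your proof is correct and follows essentially the same approach as the paper: both use the regularizing operators $R_i$, $A_i$ of Theorem \ref{thm_regularization_operators} together with the homotopy identity (\ref{eq_ra}), Remark \ref{rem_regularization}, and the support property (\ref{item_support}) to produce an inverse to $\Lambda^j$. The paper's own proof is much more terse --- it simply asserts that $[\omega]\mapsto[R_i\omega]$, for $i$ large enough, is the inverse of the map induced by inclusion --- while you have spelled out the extension by zero to $M$, the separate surjectivity/injectivity arguments, and the reason the index $i$ must depend on the class.
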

 \begin{proof} If $\omega\in \fpb^{j}_{p,c}(U) $ then, by  Theorem \ref{thm_regularization_operators} (and Remark \ref{rem_regularization}), $R_i\omega\in\F^j_{p,c}(U) $ for all $i$ large enough. By (\ref{eq_ra}), this implies that  the mapping $R:H^{j}(\fpb^\bullet_{p,c}(U))\to H^{j}(\F^\bullet_{p,c}(U))$, defined by $R(\omega):=R_i(\omega)$, for $i$ large enough, is the inverse of the mapping induced by the inclusion between the two cochain complexes. 
 %
    \end{proof}
 
We also need to introduce a complex $\D^j_c$ of compactly supported singular oriented cochains in a similar way. Given an open subset $V$ of $M$ and $j \in \N$, let $C ^j(V)$ denote the cochain complex of the singular cochains of $V$.  It is a consequence of a well-known subdivision argument that although these presheaves are not sheaves on $M$, the respective  associated sheaves  $\C^j$ give rise to the same cohomology groups.  

Given now an open subset $U$ of $X$, we let $\D^j(U):=\C^j(U\cap M)$ and we will denote by $\D^j_c(U)$ the subspace of compactly supported sections.
 \end{subsection}

 \begin{subsection}{The case $p$ close to $1$}
The first step is to prove a Poincar\'e Lemma for $L^p$ forms with compact support. We show:

\begin{lem}\label{Poincare_lem_l1_simple}
  Given $x_0$ in $\delta M$,  there is $\ep>0$ such that for all $p\ge 1$ sufficiently close to $1$ and each  $j\in \N$,  we have:$$H^j(\F_{p,c}^\bullet(B(x_0,\ep)\cap X))\simeq 0.$$
   \end{lem}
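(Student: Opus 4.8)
The plan is to establish a Poincaré Lemma for compactly supported $L^p$ forms on a conic neighborhood of a boundary point $x_0 \in \delta M$, using the operator $\K_1$ constructed in section \ref{sect_hom_hop} together with its $L^p$-boundedness (Proposition \ref{pro_K_1_borne_pr_L_p}). First I would fix $x_0 \in \delta M$, apply Theorem \ref{thm_local_conic_structure} to the germ of $X = cl(M)$ at $x_0$, and obtain the corresponding $\ep>0$, the Lipschitz subanalytic homeomorphism $H$, and hence the homeomorphism $h:(0,1)\times N_{x_0}\to U_{x_0}$ as set up at the beginning of section \ref{sect_hom_hop}. Write $U := B(x_0,\ep)\cap X$. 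The key point is that an element $\omega\in \F^j_{p,c}(U)$ is, by definition, an $L^p$ form on $U_{x_0} = B(x_0,\ep)\cap M$ that vanishes near $S(x_0,\ep)$ (but not necessarily near $\delta M$); in particular it vanishes in the vicinity of $N_{x_0}$, which is exactly the hypothesis under which formula (\ref{eq_K_homot_operator_cpct_supp}) of Proposition \ref{pro_proprietes_de_K_nu} applies.

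The main steps would then be as follows. Choose $p_0>1$ so that for $1\le p<p_0$ the conclusion of Proposition \ref{pro_K_1_borne_pr_L_p} holds, i.e.\ $\K_1$ is bounded for the $L^p$ norm. Given $\omega\in \F^j_{p,c}(U)$ with $j\ge 1$ and $d\omega=0$, the form $\K_1\omega$ is, by Proposition \ref{pro_proprietes_de_K_nu}, weakly differentiable and satisfies $\db\K_1\omega = \omega$; moreover $\K_1\omega$ is $L^p$ by Proposition \ref{pro_K_1_borne_pr_L_p}, and since $\omega$ vanishes near $S(x_0,\ep)$ one checks from the definition (\ref{eq_def_alpha})--(\ref{eq_def_K}) of $\K_1$ (the integral being taken from $1$, i.e.\ over $[t,1]$, combined with the fact that $h$ preserves the distance to $x_0$) that $\K_1\omega$ also vanishes near $S(x_0,\ep)$, hence is a compactly supported section. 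Thus $\K_1\omega \in \fpb^{j-1}_{p,c}(U)$ and it is a weak primitive of $\omega$. This shows $H^{j}(\fpb^\bullet_{p,c}(U))\simeq 0$ for $j\ge 1$; for $j=0$ a locally constant (hence constant, $U_{x_0}$ being connected for $\ep$ small) compactly supported function on the noncompact $U_{x_0}$ must be zero, so the claim holds in all degrees. Finally, invoking Lemma \ref{lem_isom_compactly_supp_sections}, which identifies the cohomology of $\F^\bullet_{p,c}(U)$ with that of $\fpb^\bullet_{p,c}(U)$, we conclude $H^j(\F^\bullet_{p,c}(B(x_0,\ep)\cap X))\simeq 0$ for all $j$ and all $p$ sufficiently close to $1$.

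The step I expect to require the most care is verifying the support condition: that $\K_1\omega$ genuinely lies in $\F^{j-1}_{p,c}$, i.e.\ that it is supported away from $S(x_0,\ep)$ whenever $\omega$ is. This uses that $H$ (hence $h$) preserves the distance to $x_0$ together with the precise form of the homotopy defining $\K_1$: a point at distance $t|x-x_0|$ from $x_0$ is moved only to points at distance $\le |x-x_0|$, so if $\omega$ is supported in $\{|x-x_0|\le (1-\delta)\ep\}$ then so is $\hn_1\omega$ and hence $\K_1\omega$. One should also double-check the borderline degree: for $j=0$ there is no operator $\K_1$, but since $U_{x_0}$ is a noncompact connected manifold, a closed (i.e.\ locally constant) compactly supported $0$-form is identically $0$, so $H^0(\F^\bullet_{p,c}(U)) = 0$ as well. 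The remaining ingredients — weak differentiability of $\K_1\omega$, the homotopy formula, and the $L^p$ bound — are all supplied verbatim by the propositions of section \ref{sect_hom_hop}, so no further estimates are needed.
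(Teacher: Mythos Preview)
Your proposal is correct and follows essentially the same route as the paper: apply Theorem \ref{thm_local_conic_structure} to get $\ep$, use Proposition \ref{pro_proprietes_de_K_nu} (specifically (\ref{eq_K_homot_operator_cpct_supp})) to see that $\K_1\omega$ is a weak primitive of a closed compactly supported $\omega$, invoke Proposition \ref{pro_K_1_borne_pr_L_p} for the $L^p$ bound, and conclude via Lemma \ref{lem_isom_compactly_supp_sections}. You are in fact more explicit than the paper about the support check for $\K_1\omega$, which the paper leaves implicit.

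One small inaccuracy in your $j=0$ argument: $U_{x_0}$ need not be connected, even for small $\ep$. The correct observation (which is what the paper tacitly uses) is that, by the conic structure $h:(0,1)\times N_{x_0}\to U_{x_0}$, every connected component of $U_{x_0}$ reaches $S(x_0,\ep)$; hence a section that vanishes near $S(x_0,\ep)$ vanishes on a nonempty open subset of each component, and a locally constant such section is therefore identically zero.
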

\begin{proof}Let $\ep$ be some positive real number satisfying the conclusion of  Theorem \ref{thm_local_conic_structure} (which enables us to define the homotopy operators $\K_\nu$ of section \ref{sect_hom_hop}).
  A closed  $0$-form with compact support being  identically zero, the result is clear if $j=0$.  Let us thus fix a closed form $\omega  \in  \F_{p,c}^j ( B(x_0,\ep)\cap X )$ with $j>0$.  As $\omega$ is a  compactly supported section, by Proposition \ref{pro_proprietes_de_K_nu},   $\K_1\omega$ is a weakly differentiable $(j-1)$-form satisfying 
 $\overline{d}\K_1\omega=\omega$.  Furthermore, by Proposition  \ref{pro_K_1_borne_pr_L_p}, it  is $L^p$ if $p$ is sufficiently close to $1$.  By Lemma \ref{lem_isom_compactly_supp_sections}, this entails that   $\omega$ is the derivative of a compactly supported section on $B(x_0,\ep)\cap X$. 
\end{proof}

 The just above lemma holds {\it for $p$  sufficiently close to $1$} in the sense that there is $p_{0}\in (1,\infty]$ such that its statement holds for all $p \in [1, p_{0})$.  If we define $p_M(x_0)$ as the biggest such real number $p_0$, this number of course depends on the geometry of $X$ near $x_0$ and may  vary on this set. However, we have:
 \begin{lem}\label{lem_p_M_borne}
   $ \inf_{x_0 \in X} p_M(x_0)>1. $
\end{lem}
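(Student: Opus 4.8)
The plan is to revisit the proof of Lemma \ref{Poincare_lem_l1_simple} and track the source of the threshold $p_M(x_0)$, showing it is controlled by a \L ojasiewicz exponent that can be chosen uniformly on $X$. The key observation is that the only place where ``$p$ close to $1$'' was used is Proposition \ref{pro_K_1_borne_pr_L_p}, which in turn relied (by duality) on Proposition \ref{pro_K_0_borne_pr_L_p}, whose threshold $k+1$ comes from the \L ojasiewicz inequality (\ref{eq_jac_r_s}): $s^k \le C\,\jac r_s(x)$ on $(0,1)\times U_{x_0}$. So it suffices to prove that the integer $k$ (equivalently, the H\"older conjugate threshold for $\K_1$) can be bounded independently of the base point $x_0\in X$.

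The approach I would take is a compactness argument combined with definable dependence on parameters. First, recall that Theorem \ref{thm_local_conic_structure} and the construction in section \ref{sect_hom_hop} produce, for each $x_0\in X$, a radius $\ep(x_0)>0$, a homeomorphism $H$, and the retraction $r$; the point is that all of this can be done \emph{subanalytically in $x_0$}. Concretely, one considers the subanalytic family $\{(x_0, z) : x_0\in X,\ z\in X,\ |z-x_0|\le \ep(x_0)\}$ and applies the relative (parametrized) versions of Lemma \ref{prop proj reg} and of the inductive statements $(\textrm{A}_n)$ in the proof of Theorem \ref{thm_local_conic_structure}; since cell decompositions and the \L ojasiewicz inequality hold in a uniform way over subanalytic families, one obtains a single subanalytic function $(x_0,s,x)\mapsto \jac r_s^{x_0}(x)$ and a \emph{single} exponent $k$ and constant $C$ such that $s^k\le C\,\jac r_s^{x_0}(x)$ for all $x_0$ in a fixed compact piece of $X$ and all $(s,x)$. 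This is exactly Proposition \ref{pro_lojasiewicz_inequality} applied once, with $x_0$ as an extra variable, rather than once per point. Consequently the threshold $q_0 = k+1$ in Proposition \ref{pro_K_0_borne_pr_L_p}, and hence the threshold $p_0$ for $\K_1$ in Proposition \ref{pro_K_1_borne_pr_L_p} (its H\"older conjugate), is uniform, so $p_M(x_0) \ge p_0 >1$ for all $x_0$, giving $\inf_{x_0\in X} p_M(x_0)\ge p_0 >1$.

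Alternatively, if one prefers to avoid re-running the construction with parameters, a cleaner packaging is available: by an argument of Hardt-type triviality for subanalytic families, there are finitely many points $x_1,\dots,x_N\in X$ and a covering of $X$ by relatively open subanalytic sets $V_1,\dots,V_N$ together with, on each $V_i$, a subanalytic bi-Lipschitz trivialization of the germ of $X$ modeled on the germ at $x_i$ (with uniformly bounded Lipschitz constants). Since $\K_1$ and its $L^p$ bound are, by Proposition \ref{pro lp bi-lips invariant} and the estimates of section \ref{sect_lp_bounds}, invariant (up to a bounded factor) under subanalytic bi-Lipschitz maps with controlled constants, the threshold $p_M(x_0)$ for $x_0\in V_i$ is bounded below by the threshold $p_M(x_i)$ up to a fixed factor; taking $p_0 := \min_{1\le i\le N} $ (of the thresholds for the $x_i$), which is a minimum of finitely many numbers each $>1$, yields $\inf_{x_0\in X}p_M(x_0)\ge p_0>1$.

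The main obstacle is making the parametrized version of Theorem \ref{thm_local_conic_structure} (or of the relevant \L ojasiewicz estimate) precise: one must check that the inductive construction $(\textrm{A}_n)$ and the cell decompositions of Lemma \ref{prop proj reg} go through with $x_0$ treated as an additional parameter, so that the constants and exponents appearing in (\ref{eq_jac_r_s}) and (\ref{eq_jac_h_nu}) do not depend on $x_0$. This is routine in the o-minimal/subanalytic setting (all the tools—cell decomposition, \L ojasiewicz's inequality in the form of Proposition \ref{pro_lojasiewicz_inequality}, definable choice—hold over parameters), but it is the step that requires care, and I would spell out just enough of the parametrized statement to pin down the uniform exponent $k$. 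Everything else (the deduction of the $L^p$ bounds and of the Poincar\'e Lemma from the uniform $k$) is then verbatim the arguments already given in sections \ref{sect_lp_bounds} and the proof of Lemma \ref{Poincare_lem_l1_simple}.
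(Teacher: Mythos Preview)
Your alternative approach (the third paragraph) is essentially the paper's proof. The paper invokes generic subanalytic bi-Lipschitz triviality (Theorem 2.2 of \cite{vlt}) to obtain a finite partition of $X$ such that any two germs $M_{x_0}$, $M_{x_0'}$ in the same piece are related by a subanalytic bi-Lipschitz homeomorphism, and then uses Proposition \ref{pro lp bi-lips invariant} to conclude that $p_M$ is actually \emph{constant} on each piece, hence takes only finitely many values. This is slightly sharper than your phrasing (``bounded below \dots\ up to a fixed factor'' is unnecessary hedging: bi-Lipschitz invariance of $L^p$ cohomology gives equality of the thresholds, not just comparability), but the idea is the same.

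Your primary approach---carrying $x_0$ as a parameter through the construction of Theorem \ref{thm_local_conic_structure} to get a single uniform \L ojasiewicz exponent $k$ in (\ref{eq_jac_r_s})---is a genuinely different route. It would work and has the merit of being constructive (it produces an explicit uniform threshold rather than an existential one), but it is considerably more laborious: you must verify that the inductive statements $(\textrm{A}_n)$ and Lemma \ref{prop proj reg} go through with parameters, which, while standard in the o-minimal setting, is a nontrivial amount of bookkeeping. The paper's argument sidesteps all of this by reducing to finitely many local models and invoking bi-Lipschitz invariance, at the cost of quoting an external triviality theorem.
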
 
  \begin{proof}If $M_{x_0}$ denotes the germ of $M$ at $x_0$, the family $(M_{x_0})_{x_0\in X}$ is  globally subanalytic.  As a matter of fact, by generic subanalytic bi-Lipschitz triviality (see Theorem 2.2 of \cite{vlt}), we know that there is a finite partition of $X$, such that given any two points $x_0$ and $x_0'$ in the same element of this partition, there is a (germ of) globally subanalytic bi-Lipschitz homeomorphism that maps  $M_{x_0}$ onto $M_{x_0'}$. Hence, by Proposition \ref{pro lp bi-lips invariant},  $p_M$ can take only  finitely many values.
  \end{proof}

 
 Given $k\in \N$ and an open subset $W$ of $M$, let %
\begin{eqnarray*}\phi_{W}^k:\Omega^k(W )&\to&
\C^{k}(W )\\
 \omega &\mapsto& [\phi_{W} ^j(\omega):\sigma \mapsto \int_\sigma
\omega].\end{eqnarray*}
\begin{thm}\label{thm_lq_psi_M_isom}
For each $p\ge 1$ sufficiently close to $1$ and each $j\in \N$, the mapping $\phi_{M} ^j$ induces an  isomorphism between $H_{p}^j(M) $ and  $H^j(M)$. 
\end{thm}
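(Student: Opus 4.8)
The plan is to run a sheaf-theoretic comparison of the three complexes of sheaves $\F^\bullet_p$, $\fpb^\bullet_p$, and $\D^\bullet$ on $X$, exactly as in the classical de Rham argument, using the Poincar\'e Lemmas already in hand. First I would observe that $\F^j_p(X)=\Omega^j_p(M)$ (because $X$ is compact, locally $L^p$ equals $L^p$), so that the cohomology of the complex of global sections of $\F^\bullet_p$ is precisely $H^j_p(M)$. The strategy is then to show that for $p$ sufficiently close to $1$ the complex $\F^\bullet_p$ is a (soft, hence acyclic) resolution of the constant sheaf $\R_X$, and that the integration maps $\phi^\bullet$ realize a quasi-isomorphism with the complex $\D^\bullet$ of singular cochains, which is also a soft resolution of $\R_X$ computing $H^j(M)$.

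The key steps, in order: (1) Note each $\F^j_p$ is soft (it admits partitions of unity since multiplication by a smooth bounded function preserves the local $L^p$ and local $L^p$-differential conditions), hence acyclic, so $H^j$ of its global sections computes the hypercohomology $\mathbb{H}^j(X,\F^\bullet_p)$. (2) Prove the complex $\F^\bullet_p$ is exact in positive degrees as a complex of sheaves, with $\mathcal{H}^0=\R_X$. For a point $x_0\in M$ this is the ordinary smooth Poincar\'e Lemma on a small ball. For $x_0\in\delta M$ one uses the compactly-supported Poincar\'e Lemma, Lemma \ref{Poincare_lem_l1_simple}: the stalk cohomology at $x_0$ is the direct limit over shrinking $B(x_0,\ep)\cap X$, and one must pass from the compact-support statement $H^j(\F^\bullet_{p,c}(B(x_0,\ep)\cap X))\simeq 0$ for $j>0$ to the ordinary stalk statement. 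This is the standard ``cohomology with compact supports of a cone vanishes $\Rightarrow$ ordinary cohomology of a cone is trivial'' manipulation: given a closed germ $\omega$ near $x_0$, multiply by a cutoff $\chi$ equal to $1$ near $x_0$ and supported in $B(x_0,\ep)\cap X$, so $\chi\omega$ is a compactly supported section whose differential $d\chi\wedge\omega$ is supported away from $x_0$; apply $\K_1$ (Propositions \ref{pro_proprietes_de_K_nu} and \ref{pro_K_1_borne_pr_L_p}, valid for $p$ close to $1$) to $d\chi\wedge\omega$ — which is zero near $N_{x_0}$ hence eligible for (\ref{eq_K_homot_operator_cpct_supp}) — and combine via (\ref{eq_K_homot_operator_cpct_supp}) to write $\omega$ near $x_0$ as $\db$ of a weakly differentiable $L^p$ form, then regularize via Corollary \ref{cor_ws_isom_smooth} / Lemma \ref{lem_isom_compactly_supp_sections} to get a smooth $L^p$ primitive; for $\mathcal{H}^0$ one checks a closed $0$-form is locally constant. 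Here one invokes Lemma \ref{lem_p_M_borne} so that a single $p_0>1$ works uniformly over all $x_0\in X$. (3) Conclude $\F^\bullet_p$ is a soft resolution of $\R_X$, so $H^j_p(M)=H^j(\F^\bullet_p(X))\simeq \mathbb{H}^j(X,\R_X)=H^j(X)$. (4) Similarly, the sheaves $\C^j$ (associated to singular cochain presheaves of $U\cap M$) form a soft resolution of $\R_X$ computing $H^j(M)$; the integration maps $\phi^j_W$ commute with $d$ (Stokes) and with restriction, hence define a morphism of complexes of sheaves $\F^\bullet_p\to \D^\bullet$ inducing an isomorphism on $\mathcal{H}^0=\R_X$, therefore a quasi-isomorphism, therefore an isomorphism $H^j_p(M)\to H^j(M)$ on global sections. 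Since $M$ need not be compact one argues with the sheaves on $X$ throughout but the global sections over $X$ of these presheaf-associated sheaves are the forms/cochains on $M$; one must check $\phi^j_M$ is exactly the map induced on global sections by the sheaf morphism, which is immediate from the definitions.

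The main obstacle I expect is step (2) at the singular points $x_0\in\delta M$: translating Lemma \ref{Poincare_lem_l1_simple} (a statement about compactly supported sections, which for $x_0\in\delta M$ are forms on $B(x_0,\ep)\cap M$ vanishing only near $S(x_0,\ep)$, not near $\delta M$) into the vanishing of the ordinary stalk cohomology of $\F^\bullet_p$ at $x_0$. The delicate points are: ensuring the cutoff function $\chi$ and its use do not destroy the $L^p$-differentiability (it does not, since $d\chi$ is smooth and bounded and supported in a region where the conic structure is bi-Lipschitz), checking that $d\chi\wedge\omega$ really is zero in a neighborhood of $N_{x_0}$ so that (\ref{eq_K_homot_operator_cpct_supp}) applies rather than the less convenient (\ref{eq_K_homot_operator}), and finally going from the weakly differentiable primitive produced by $\K_1$ to a genuinely smooth $L^p$ primitive via the regularization operators, which is exactly what Lemma \ref{lem_isom_compactly_supp_sections} packages. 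One also needs to be slightly careful that the softness/partition-of-unity argument for $\F^j_p$ is legitimate near $\delta M$ — but bump functions on $X$ restrict to smooth bounded functions on $M$ with bounded derivative, so multiplication preserves $\F^j_p$, and softness follows. Everything else (step 1, the $\mathcal{H}^0$ computations, step 4 using Stokes for ordinary smooth forms on $M$, and the standard fact that singular cochains resolve the constant sheaf) is routine.
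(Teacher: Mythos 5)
The gap is in your Step (2): the complex $\F^\bullet_p$ is \emph{not} a resolution of $\R_X$ at points of $\delta M$, and the cutoff manipulation you sketch cannot produce that conclusion. For $x_0\in\delta M$ and $U$ small, $U\cap M$ is a punctured cone that deformation retracts onto the link $N_{x_0}$, and the stalk cohomology of $\F^\bullet_p$ at $x_0$ (for $p$ near $1$) is the cohomology of that link, which is in general nonzero. The paper's final example makes this concrete: for the suspension of the torus, $H^1_p(X_{reg})\simeq\R^2$ while $H^1(X)=0$, so if $\F^\bullet_p$ resolved $\R_X$ your Step (3) would deliver $H^\bullet(X)$, which is simply the wrong answer. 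The same objection sinks your Step (4): $\D^\bullet$, with $\D^j(U)=\C^j(U\cap M)$, has stalk cohomology at $x_0\in\delta M$ equal to link cohomology, not $\R$, so it does not resolve $\R_X$ either, and your step (4) already contains the contradiction that its global section cohomology is simultaneously $H^\bullet(M)$ and $H^\bullet(X)$. The slogan ``vanishing of $H^\bullet_c$ on a cone gives vanishing of $H^\bullet$'' applies to a cone \emph{with its vertex}; here the forms live on $B(x_0,\ep)\cap M$ which excludes $\delta M$ and is not contractible. Mechanically: applying (\ref{eq_K_homot_operator_cpct_supp}) to $\chi\omega$ gives $\omega=\db\K_1(\chi\omega)+\K_1(d\chi\wedge\omega)$ near $x_0$, but $\K_1$ integrates out along cone rays to $N_{x_0}$, so those rays traverse the annulus where $d\chi\neq 0$ and $\K_1(d\chi\wedge\omega)$ is a closed, generally non-exact remainder near $x_0$ --- it represents exactly the link class of $\omega$.

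What the paper does instead is dualize. It treats $\F^\bullet_{p,c}$ and $\D^\bullet_c$ as cosheaves, forms the flabby complexes of presheaf duals $\E^j(U)={\bf Hom}(\D^{m-j}_c(U),\R)$ and $\mathcal{G}^j_p(U)={\bf Hom}(\F^{m-j}_{p,c}(U),\R)$, and applies Bredon's comparison theorem for complexes of flabby sheaves. The local input is then \emph{exactly} Lemma \ref{Poincare_lem_l1_simple} (vanishing of $H^j(\F^\bullet_{p,c}(B(x_0,\ep)\cap X))$) together with the matching vanishing of $H^j(\D^\bullet_c(B(x_0,\ep)\cap X))$ coming from the Lipschitz conic structure; no ordinary Poincar\'e lemma at $x_0$ is needed, and indeed none holds for $p$ near $1$. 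You should replace Steps (2)--(4) with this compactly-supported/flabby-dual argument; the rest of your outline (softness of $\F^j_p$, $\F^j_p(X)=\Omega^j_p(M)$, Lemma \ref{lem_p_M_borne} to get a uniform $p_0$, and the integration map being a chain map) is fine and lines up with what the paper does.
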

\begin{proof}
  Given an open subset $U$ of $X$ and $p\in [1,\infty)$, we denote by
   $\lambda_U^j:\F^j_{p,c} (U) \to \D^j_c(U) 
 $  the mapping induced by $\phi_{U\cap M}^j$. 
  It is easily checked from the definitions that  for all  $p\in [1,\infty)$
$$\E^j(U):= {\bf Hom} (\D_c^{m-j}(U),\R)\;\; \mbox{and}\;\; \mathcal{G}^j_p(U):= {\bf Hom} (\F^{m-j}_{p,c}(U),\R)$$ 
 are  complexes of flabby sheaves ($  \D^j_c$  and  $ \F^j_{p,c}$ are sometimes called {\it cosheaves} in the literature, see for instance \cite{bredon} Propositions V.1.6 and V.1.10). Moreover, the mappings  $\mu_U^{j}: \E^j(U)\to\mathcal{G}^j_p(U)$, $j\in \N$, defined as the respective adjoints of the $\lambda_U^{m-j}$, constitute a morphism of  complexes of sheaves.

It thus easily follows from sheaf theory (see for instance \cite{bredon}, section IV, Theorem 2.2) that it is  enough to show that for every $x_0\in X$ and every $\ep>0$ small enough, the mapping $\lambda^j_{B(x_0,\ep)\cap X}$ is an isomorphism for every $j$ (since the morphisms $\mu^j_{B(x_0,\ep)\cap X}$ are then isomorphisms as well).

  If $x_0$ is a point of $M$, this is a direct consequence of the usual Poincar\'e Lemma. We thus can assume that $x_0\in \delta M$, in which case, it easily comes down from the conic structure of $X$ at $x_0$ (see Theorem \ref{thm_local_conic_structure}) that 
%
%
  $$H^j(\D_c^\bullet(B(x_0,\ep)\cap X))\simeq 0,\quad  \mbox{for all $j$},$$
 so that the desired result follows from Lemma \ref{Poincare_lem_l1_simple} (for all $p$ close to $1$, see Lemma \ref{lem_p_M_borne}).
\end{proof}
\end{subsection}

 \begin{subsection}{The case where $p$ is large}
Fix $x_0\in X$ and set $\ux:=M \cap B(x_0,\ep)$, where $\ep>0$ is provided by  Theorem \ref{thm_local_conic_structure}.  We now have:

\begin{lem}\label{lem x normal Poincare lemma}(Poincar\'e Lemma for $p$ large) 
For $p\in[1,\infty]$ large enough, we have for all $j>0$: $$H_{p} ^j (\ux) \simeq 0.$$
\end{lem}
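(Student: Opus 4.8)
The plan is to prove the Poincaré Lemma for $p$ large using the operator $\K_0$ constructed in section \ref{sect_hom_hop}, in complete analogy with the proof of Lemma \ref{Poincare_lem_l1_simple} but now \emph{without} requiring compact support. First I would fix $x_0\in X$ and let $\ep>0$ be provided by Theorem \ref{thm_local_conic_structure}, so that the homeomorphism $H$ and the associated homotopy $r$ are available on $\ux = M\cap B(x_0,\ep)$. I take $p\in[1,\infty]$ large enough so that the conclusions of Proposition \ref{pro_dfn_K_0}, Proposition \ref{pro_proprietes_de_K_nu_Lp}, and Proposition \ref{pro_K_0_borne_pr_L_p} all hold simultaneously (a finite number of lower bounds on $p$, each coming from a \L ojasiewicz inequality). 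The case $j=0$ is immediate: a closed $0$-form on the connected set $\ux$ (which is connected since it is homeomorphic to a cone) is locally constant, hence constant, and $H^0_p(\ux)\simeq\R$; actually for $j>0$ we argue as follows.

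For $j>0$, let $\omega\in\Omega^j_p(\ux)$ be a closed $L^p$ form (so $d\omega=0$ and $\omega$ is $L^p$ with $L^p$ exterior differential). By Proposition \ref{pro_proprietes_de_K_nu_Lp}, applied with $d\omega=0$, we get
$$\overline{d}\,\K_0\omega = \omega,$$
so $\omega$ is the weak exterior differential of $\K_0\omega$. By Proposition \ref{pro_K_0_borne_pr_L_p}, $\K_0\omega$ is an $L^p$ form, hence $\K_0\omega\in\overline{\Omega}^{j-1}_p(\ux)$. This shows $\omega$ represents the zero class in $\overline{H}^{j-1}_p(\ux)$. Finally, by Corollary \ref{cor_ws_isom_smooth}, the inclusion $\Omega^\bullet_p(\ux)\hookrightarrow\overline{\Omega}^\bullet_p(\ux)$ induces isomorphisms in cohomology, so $H^j_p(\ux)\simeq\overline{H}^j_p(\ux)\simeq 0$ for $j>0$.

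The work is genuinely already done: the three hard technical ingredients — the existence of the Lipschitz conic structure (Theorem \ref{thm_local_conic_structure}), the fact that the homotopy operator $\K_0$ is well-defined on $L^p$ forms for $p$ large and satisfies the chain-homotopy identity in the weak sense (Propositions \ref{pro_dfn_K_0} and \ref{pro_proprietes_de_K_nu_Lp}), and the $L^p$ boundedness of $\K_0$ (Proposition \ref{pro_K_0_borne_pr_L_p}) — have been established in the preceding sections, precisely so that this lemma becomes a short formal consequence. The main subtlety to be careful about is bookkeeping on the threshold $p_0$: one must check that the lower bounds on $p$ coming from the various \L ojasiewicz estimates (for $\jac h$, for $\jac r_s$, and in the proof of Proposition \ref{pro_proprietes_de_K_nu_Lp}) can be taken uniformly, so that a single $p_0\in[1,\infty)$ works for all three propositions at once; this is clear since each is a finite condition of the form $p>k_i+1$. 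There is no real obstacle remaining — the only thing to do is to assemble the pieces in the right order and invoke Corollary \ref{cor_ws_isom_smooth} to pass from weakly differentiable forms back to smooth ones.
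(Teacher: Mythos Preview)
Your proposal is correct and follows essentially the same approach as the paper's own proof: invoke Propositions \ref{pro_dfn_K_0}, \ref{pro_proprietes_de_K_nu_Lp}, and \ref{pro_K_0_borne_pr_L_p} to produce $\alpha:=\K_0\omega\in\wsp^{j-1}(\ux)$ with $\db\alpha=\omega$, then use Corollary \ref{cor_ws_isom_smooth} to pass back to smooth forms. One small slip: you write ``$\omega$ represents the zero class in $\overline{H}^{j-1}_p(\ux)$'' where you mean $\overline{H}^{j}_p(\ux)$.
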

\begin{proof}
By Corollary \ref{cor_ws_isom_smooth}, it is enough to show that if $p$ is sufficiently large then  for every closed form $\omega \in \op^j(\ux)$, $j>0$, there is $\alpha \in \wsp^{j-1}(\ux)$ such that $\omega=\db \alpha$. 
But, by Propositions \ref{pro_dfn_K_0}, \ref{pro_proprietes_de_K_nu_Lp} and \ref{pro_K_0_borne_pr_L_p}, if $p$ is sufficiently large and if $\omega$ is such a form then   $\alpha:=\K_0\omega$ has all the required properties. 
\end{proof}

 We may here make an observation analogous to the one we made in Lemma \ref{lem_p_M_borne}. 
  If we define $q_M(x_0)$ as the smallest real number $q_0$ such that Lemma \ref{lem x normal Poincare lemma} holds for all $p \in (q_0,\infty]$, the same argument as in the proof of Lemma \ref{lem_p_M_borne} then establishes:
 \begin{equation}\label{eq_qm_borne}
   \sup_{x_0 \in X} q_M(x_0)<\infty. 
 \end{equation}

\begin{proof}[proof of Theorem \ref{thm_de_Rham_linfinity}]
In virtue of Theorem \ref{thm_intro_linfty}, it is enough to show that the inclusion of complexes $\F^j_{\infty} \to \F^j_{p}$ induces isomorphisms between the respective cohomology groups of the global sections. Since these sheaves are acyclic (these are fine sheaves), it suffices to prove that $(\F^j_{\infty})_{j\in \N}$  and $(\F^j_{p})_{j\in \N}$ constitute resolutions of the same sheaf $\A$  for each $p$ sufficiently large (see for instance \cite{bredon}, section II-4.2). But, if we define $\A(V)$ as the set of  $0$-forms $\omega:V\cap M\to \R$ which are locally constant (at every point of $V\cap M$), then, by Lemma \ref{lem x normal Poincare lemma}, the sequence
  $$0\hookrightarrow \A\hookrightarrow \F^0_{p} \overset{d}{\longrightarrow} \F^1_{p}  \overset{d}{\longrightarrow}\F^2_{p} \overset{d}{\longrightarrow} \dots $$
 is exact for all $p \ini$ sufficiently large (see (\ref{eq_qm_borne})).
\end{proof}
\end{subsection}

\begin{subsection}{An example.}
We end this paper by an example on which we discuss the
results of this paper. Let $X$ be the suspension of the torus.

%
%
%

 It is the set
constituted by two cones over a torus that are attached along this
torus. It is the most basic example on which Poincar\'e duality
fails for singular homology but holds for intersection homology \cite{ih1}.  Let $x_0$ and $x_1$ be the two
isolated singular points.

This set is a pseudomanifold which has very simple singularities
(metrically conical). However, the results of this paper show that
if they were not conical, this would not affect the
cohomology groups which only depend on the topology of the
underlying singular space.  This simple example is already enough to illustrate  how
the singularities affect Poincar\'e duality for $L^p$ cohomology.

Take $p$ in $[1,\infty]$ sufficiently close to $1$  for Theorem \ref{thm_de_Rham_lq}  to hold for $X_{reg}$. If $p$ is sufficiently close to $1$  then, by Theorem \ref{thm_de_Rham_linfinity}, we also have $ H_q ^j (X_{reg})\simeq I^\bt H^j (X)$, where $q$ is the H\"older conjugate of $p$. The  cohomology groups
involved in these theorems are gathered in the table below.

 \begin{center}
\begin{tabular}{|l|c|c|c|c|}
  \hline 

  {Cohomology groups}{$\qquad\qquad\quad j=$}
 & $0$& $1$& $2$ & $3$  \\
 \hline $ I^\bt H^j (X)$ and $ H_q ^j (X_{reg})$ & $\R$ & $0$  & $\R^2$  & $\R$     \\
  \hline $ I^0 H^j (X)$    & $\R$  &$\R^2$  & $0$   &$\R$   \\
  \hline  $H^j(X_{reg})$ and $H_{p}^j (X_{reg})$& $\R$   & $\R^2$  & $\R$  & $0$    \\
  \hline
\end{tabular}
\end{center}
\medskip

All these results may be obtained from the isomorphisms given in
section \ref{sect_framework} and a triangulation. Below, we interpret them geometrically. 

Let $T \subset X$ be the original torus  and let $\sigma$ and
$\tau$ be the two generators of $H_2(X)$  supported by the respective suspensions of the   two circles  generating the torus
$T$. For $\ep>0$ set: $$\sigma^\ep :=\{x \in |\sigma|: d(x,\{x_0,x_1\})
=\ep \} ,$$
where $|\sigma|$ stands for the support of the cycle $\sigma$.

If $\omega$ is an $L^q$ $2$-form that is equal to zero near the singular points
and that satisfies
\begin{equation}\label{eq_ex_sigma}\int_\sigma \omega=1,\end{equation} and if $\omega=d
\alpha$, for some $1$-form $\alpha$, then $\int_{\sigma^\ep} \alpha\equiv 1$ (by Stokes' formula). As
the volume of $\sigma^\ep$ tends to zero, $\alpha$ cannot be an 
$L^q$ form if $q$ is big. Consequently, if $\omega$ is an $L^q$ closed $2$-form,
zero near the singularities and satisfying (\ref{eq_ex_sigma}), it
must represent a nontrivial class.  This accounts for the fact that $ H_q ^2 (X_{reg})\simeq \R^2$.
In fact, every nontrivial class
may be represented by a shadow form \cite{bgm}.

 The nontrivial $L^p$ classes of $1$-forms are dual to the generators of the torus $T$ (while, as we have seen in the preceding paragraph, the nontrivial $L^q$ classes are dual to their respective suspensions $\sigma$ and $\tau$). We  see that $L^p$ cohomology is dual to $L^q$ cohomology 
in dimension $0$ and $1$ (as it is established by  Corollary
\ref{cor_2}).

However, the above form $\alpha$ may be $L^p$ and this accounts for the fact that the $L^p$ cohomology of the $2$-forms is not isomorphic to $\R^2$. The only nontrivial $L^p$ class of
$2$-forms is actually provided by the forms whose integral  on $T$ is
nonzero.   We see in particular that this collapsing torus induces a gap between  $H_{p}^{2} (X_{reg})$ and $H_{q} ^1
(X_{reg})$. 
 This is a typical example of the way the
singularities affect the duality between $L^p$ and $L^q$
cohomology.
\end{subsection}

\end{section}

\end{document}